\documentclass[a4paper,11pt]{article}
\usepackage[utf8]{inputenc} 
\usepackage[T1]{fontenc}    
\usepackage{lmodern}
\usepackage[english]{babel} 
\usepackage{a4wide}         
\usepackage{xcolor}

\usepackage{amsmath, amssymb, amsthm, amsfonts , stmaryrd} 
\usepackage{mathrsfs}  
\usepackage{bbold}     
\usepackage{bigints}   
\usepackage{upgreek}   
\usepackage{systeme}   
\usepackage{calc}      

\usepackage{graphicx}
\graphicspath{{./images/}} 
\usepackage[font=small,labelfont=bf]{caption} 
\usepackage{subcaption} 

\usepackage{hyperref}

\newtheoremstyle{custom_theorem} 
  {10pt} 
  {10pt} 
  {\itshape} 
  {} 
  {\bfseries} 
  {.} 
  { } 
  {} 

\newtheoremstyle{custom_remark} 
  {10pt} 
  {10pt} 
  {\normalfont} 
  {} 
  {\bfseries} 
  {:} 
  { } 
  {} 

\theoremstyle{custom_theorem}
\newtheorem{theorem}{Theorem}[section]
\newtheorem{proposition}[theorem]{Proposition}
\newtheorem{lemma}[theorem]{Lemma}
\newtheorem{corollary}[theorem]{Corollary}
\newtheorem{definition}[theorem]{Definition}

\theoremstyle{custom_remark}
\newtheorem{remark}{Remark}[section]

\numberwithin{equation}{section}

\renewcommand{\P}{\mathbb{P}}

\newcommand{\E}{\mathbb{E}}
\newcommand{\N}{\mathbb{N}}
\newcommand{\1}{\mathbb{1}}
\newcommand{\C}{\mathbb{C}}

\newcommand{\R}{\mathbb{R}}
\newcommand{\K}{\mathbb{K}}

\newcommand{\dd}{\mathrm{d}}

\newcommand{\HH}{\mathfrak{h}}
\newcommand{\hh}{\mathfrak{h}}
\newcommand{\cc}{\mathfrak{c}}

\DeclareUnicodeCharacter{2212}{\ensuremath{-}}

\title{Spectral aspects of random heavy-tailed tensors}
\author{Rémi Bonnin and Alexis Imbert}
\author{Rémi Bonnin\thanks{Aix-Marseille Univ, CNRS, I2M, Marseille, France. Email : \href{mailto:remi.bonnin@ens.fr}{remi.bonnin@ens.fr}}\,, Alexis Imbert 
\thanks{Universit\'{e} de Bordeaux, Institut de Math\'{e}matiques de Bordeaux, 351 Cours de la Lib\'{e}ration, 33400 Talence, France. Email : \href{mailto:alexis.imbert@math.u-bordeaux.fr}{alexis.imbert@math.u-bordeaux.fr}}}
\begin{document}

\maketitle

\begin{abstract}
We investigate heavy-Wigner tensors: symmetric random tensors whose independent entries, up to the tensor symmetries, are centered and have moments of order $N^{-(p-1)}$, where $N$ is the tensor dimension. This framework includes normalized adjacency tensors of sparse Erd\H{o}s-Rényi hypergraphs and truncated heavy-tailed tensor models. We study trace invariants, a complete family of polynomial invariants under permutations of the tensor indices. We prove that, after the natural normalization, the only non-vanishing asymptotic contributions are those associated with fat hypertrees, and we derive a central limit theorem for these injective trace invariants.

As applications, we first analyze Erd\H{o}s-Rényi $p$-uniform hypergraphs with edge probability $\alpha_N=c/N^{p-1}$. We prove local weak convergence to a uniform Galton--Watson hypertree with Poisson offspring distribution. We also prove convergence of the empirical spectral distribution of the matrix obtained by contracting the adjacency tensor; in the sparse regime the limiting law depends on the sparsity parameter $c$ and has unbounded support, while in the regime $N^{p-1}\alpha_N\to\infty$ with $N^{p-1}(1-\alpha_N)\rightarrow\infty$, the limiting spectral distribution is the semicircle law. This result generalizes for matrices obtained by contracting an arbitrary heavy-Wigner tensor and we derive an explicit formula for the moments of the limiting spectral measure.
\end{abstract}
\tableofcontents
\section{Introduction and main results}

Heavy-Wigner tensors extend the usual Wigner tensor model. A sequence of {\em Wigner tensors} is a sequence $(T^{(N)})_{N \geq 1}$ of symmetric tensors of order $p$ and dimension $N$ such that the entries of $N^{\frac{p-1}{2}} T^{(N)}$ are independent up to symmetry, centered and have finite moments independent of $N$.
Hence, for all $i_1<\ldots<i_p$, for all $k \geq 2$, there exists $C_{k}(i_1,\ldots,i_p)$ independent of $N$ such that,
$$ \E \left[ (N^{\frac{p-1}{2}} T_{i_1,\ldots,i_p})^{k} \right] = C_{k} (i_1,\ldots,i_p). $$
The typical case of Wigner tensor is the Gaussian Orthogonal Tensor Ensemble (GOTE) where $T^{(N)}=X^{(N)} / N^{\frac{p-1}{2}}$ with $X^{(N)}$ a symmetric Gaussian tensor with entries distributed as Gaussian random variables with mean $0$ and variance $\sigma^2_{i_1,\ldots,i_p}=|\mathrm{Stab}(i_1,\ldots,i_p)|$. The limiting spectral distribution of this tensor model is derived in \cite{bonnin_wigner}.

Moreover, in random matrix theory, the heavy-Wigner matrices are a well-studied extension of Wigner matrices. A family of hermitian matrices $(A^{(N)})_{N \geq 1}$ is heavy-Wigner if its entries are centered and there exists a sequence of constants $(c_k)_{k}$ such that for all $i<j$ and all $k\geq 1$,
$$\lim_{N\rightarrow\infty}N\E \left[|A_{i,j}|^{2k}\right]=c_k,\qquad N \E \left[|A_{i,i}|^{k} \right]=O(1).$$
Its limiting spectral distribution has been studied in \cite{zakharevich,ben_arous_guionnet}, the linear statistics of this model are computed in \cite{benaych_georges_male_guionnet} and the limiting moments of the sum of a heavy-Wigner matrix with another arbitrary matrix is computed in \cite{male_heavy_wigner}. We therefore propose the following model: $T^{(N)}$ is heavy-Wigner tensor of order $p$ with parameters $(M_k)_{k}$ if it is real symmetric with centered entries and for all $i_1<\cdots<i_p$ and all $k\geq 2$
$$\lim_{N\rightarrow\infty}N^{p-1}\E \left[T_{i_1,\cdots,i_p}^{k}\right]=M_k.$$
First we find a basis of the permutation invariant linear forms in the entries of the tensor in Theorem \ref{thm: formes lineaires invariantes}. Then we compute the limit of these linear forms for the heavy-Wigner tensor model in Theorem \ref{thm: trace injective heavy wigner}. We also prove a central limit theorem for those quantities in Theorem \ref{thm: CLT for tensors}.

On the other hand, a generalization of weak local convergence (or Benjamini-Schram convergence) for hypergraphs has been developed in \cite{delgosha_anantharam}. A primary example of a random hypergraph is the Erd\H{o}s-Rényi hypergraph. Fix $N,p\in \N$, the set of vertices is $N$ and each $p$-tuple is a hyperedge of the hypergraph independently with probability $\alpha_N$. This has recently been studied through its matrix contractions in the regime $N^{p-1}\alpha_N\rightarrow\infty$ in \cite{mukherjee}. It turns out that when $N^{p-1}\alpha_N\rightarrow c>0$, the adjacency of the Erd\H{o}s-Rényi hypergraph is a heavy-Wigner tensor. In Theorem \ref{cvlf_er}, we show that in this regime, it converges weakly locally toward the uniform Galton-Watson hypertree with offspring law Poisson of parameter $c/(p-1)!$. 

Moreover, several results concerning the matrix contraction of a Wigner tensor have been proved. In \cite{goulart_couillet_comon}, it is shown that for a GOTE tensor properly normalized, the spectral distribution of the contracted matrix converges toward the semi-circular distribution. Au and Garza-Vargas extended the result to a family of general Wigner tensors in \cite{au_garza_vargas}. We exhibit in Theorem \ref{thm:measureHeavyWigner} the limiting spectral measure of the matrix contraction of a general heavy-Wigner tensor and show in Theorem \ref{thm: CLT for matrix} that the moments of the spectral measure satisfy a central limit theorem. Theorem \ref{thm:Levy} extends this result for matrix contraction of a Lévy tensor, which is a generalization of Lévy matrices, defined for instance in \cite{ben_arous_guionnet}.

In the special case of adjacency tensors of Erd\H{o}s-Rényi hypergraphs, Theorem \ref{thm: convergence erdos renyi} gives that the limiting moments of the matrix contraction are those of a random graph: the clique expansion of the uniform Galton-Watson hypertree of offspring law Poisson with parameter $c/(p-1)!$.

\subsection{General framework}

\paragraph{Symmetric tensor.}
For any $N \geq 1$, we fix $(e_1,\ldots,e_N)$ the canonical basis of $\R^N$ or $\C^N$.
A (real or complex) tensor of order $p \geq 1$ and dimension $N \geq 1$ is a multidimensional array of (real or complex) numbers $T= (T_{i_1,\ldots,i_p})_{1\leq i_1,\ldots,i_p \leq N} \in (\K^{N})^{\otimes p}$ with $\K=\R$ or $\C$. 
The symmetric group $\mathfrak{S}_p$ acts on the space of tensors by permutation of the indices. For $\sigma \in \mathfrak{S}_p$ and $T$ a tensor, we denote $T^\sigma$ the tensor with entries 
$$ T^{\sigma}_{i_1,\ldots,i_p} = T_{i_{\sigma(1)},\ldots,i_{\sigma(p)}} . $$
The tensor $T$ is called {\em symmetric} 
if for all $\sigma \in \mathfrak{S}_p$, $T^\sigma = T$. Matrices naturally act on the space of symmetric tensors as follows. Let $T$ be a symmetric tensor of order $p$ and dimension $N$ and $U$ a matrix of dimension $N$, then we define $U \cdot T$ as the $p$-order tensor with entries
\[
    (U \cdot T)_{i_1,\ldots,i_p} := \sum_{j_1,\ldots,j_p} T_{j_1,\ldots,j_p} \prod_{k=1}^p U_{j_k i_k} .
\]

\paragraph{Hypergraph.}
As matrices are naturally related to graphs, the same holds for ($p$-order) tensors and ($p$-uniform) hypergraphs.
A {\em hypergraph} is a pair $H=(V,E)$, where $V$ is a finite vertex set and $E$ is a finite multiset of hyperedges; each hyperedge is itself a multiset of vertices. Thus loops and repeated hyperedges are allowed unless explicitly excluded. The order of a hyperedge $e$, denoted $|e|$, is the number of vertices in $e$, counted with multiplicity. The degree of a vertex $v$ is the number of incidences of $v$ with hyperedges, again counted with multiplicity. A hypergraph is $p$-\emph{uniform} if every hyperedge has order $p$, and it is $d$-\emph{regular} if every vertex has degree $d$. The \emph{clique expansion} of $H$, denoted $G_H$, is the
graph on the same vertex set obtained by replacing each hyperedge
$e$ by all graph edges between pairs of vertices belonging to $e$.

A simple hypergraph is a \emph{hypertree} if its incidence bipartite graph is a tree. A hypergraph is a \emph{fat hypertree} if the simple hypergraph obtained by forgetting hyperedge multiplicities is a hypertree. The type of a fat hypertree is the sequence $(q_r)_{r\geq 1}$, where $q_r$ is the number of distinct hyperedges that occur with multiplicity $r$. If $q_r=0$ whenever $k\nmid r$, the fat hypertree is called $k$-\emph{divisible}; if all multiplicities are exactly $k$, it is called $k$-\emph{fold}.

\paragraph{Trace invariant.}
 
In \cite{collins_male_gaudreau}, a generating family of $\mathfrak{S}_N$-invariant linear forms is exhibited in the form of test-graphs (see \cite{male_traffic, male_2} for a broader introduction to traffic probability).
The same natural questions can be considered for tensors.
As for matrices, a tensor can therefore be tested on (test-) hypergraphs. Indeed, if $T$ is a $p$-order tensor and $\mathfrak{h}=(V,E)$ is a $p$-uniform hypergraph, we denote 
$$ \mathrm{Tr}_{\mathfrak{h}}(T) := \sum_{\phi : V \rightarrow [N]} \prod_{e=\{v_1,\ldots,v_p\} \in E} T_{\phi(v_1),\ldots,\phi(v_p)} .$$
This polynomial in the entries of $T$ is called a {\em trace invariant} associated to $\mathfrak{h}$. Again if $\# \mathfrak{h}$ is the number of connected components of the hypergraph $\mathfrak{h}$, we denote 
$$\mathrm{tr}_{\mathfrak{h}}(T)= \frac{1}{N^{\# \mathfrak{h}}} \mathrm{Tr}_{\mathfrak{h}}(T) $$
the normalized trace invariant.
Inspired by traffic theory we should also define what is called {\em injective trace invariant} when the sum will now run on distinct indices. More precisely, if $\mathfrak{h}$ is a hypergraph with set of vertices $V$ and $\pi$ a partition of these vertices, we denote $\mathfrak{h}^\pi$ the hypergraph where we identify the vertices in the same block of $\pi$, and we can write 
$$ \mathrm{Tr}_{\mathfrak{h}}(T) = \sum_{\pi \in \mathcal{P}(V)} \mathrm{Tr}^0_{\mathfrak{h}^\pi}(T) \quad \mathrm{with} \quad \mathrm{Tr}^0_{\mathfrak{h}^\pi}(T) = \sum_{\genfrac{}{}{0pt}{}{\phi : V(\mathfrak{h}^\pi) \rightarrow [N]}{\mathrm{injective}}} \prod_{e=\{v_1,\ldots,v_p\} \in E} T_{\phi(v_1),\ldots,\phi(v_p)} .$$

Note that one can also express $\mathrm{Tr}^0_{\mathfrak{h}}(T)$ in terms of the  $\mathrm{Tr}_{\mathfrak{h}^\pi}(T)$ using a Möbius inversion argument.
Moreover, the trace invariants associated to $p$-uniform hypergraphs form a complete family of the polynomials invariant by the action of $\mathfrak{S}_N$.
\begin{theorem}\label{thm: formes lineaires invariantes}
    Let $p\geq 2$ and $k\geq 1$ be two integers and let $R:((\C^N)^{\otimes p})^{\otimes k}\rightarrow \C$ be a $\mathfrak{S}_N$-invariant linear form in each $(\C^{N})^{\otimes p}$ component. Then $R$ can be expressed as a linear combination of traces (or injective traces) of $p$-uniform test hypergraphs with $k$ hyperedges.
\end{theorem}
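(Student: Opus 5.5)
We identify a multilinear form $R$ on $((\C^N)^{\otimes p})^{\times k}$ with a tensor of order $pk$. Concretely, we set
$$R(T_1,\ldots,T_k)=\sum_{\mathbf{i}^{(1)},\ldots,\mathbf{i}^{(k)}} R_{\mathbf{i}^{(1)},\ldots,\mathbf{i}^{(k)}}\prod_{j=1}^k (T_j)_{\mathbf{i}^{(j)}},$$
where the coefficients are $R_{\mathbf{i}^{(1)},\ldots,\mathbf{i}^{(k)}}=R(e_{\mathbf{i}^{(1)}},\ldots,e_{\mathbf{i}^{(k)}})$ and $e_{\mathbf{i}}=e_{i_1}\otimes\cdots\otimes e_{i_p}$.

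Invariance under $\mathfrak{S}_N$ means $R(\sigma\cdot T_1,\ldots,\sigma\cdot T_k)=R(T_1,\ldots,T_k)$ for every permutation matrix $\sigma$. Here $\sigma$ acts through the action $U\cdot T$ defined above. Extended multilinearly to all tensors, not only symmetric ones, this is equivalent to the coefficient identity
$$R_{\sigma(\mathbf{i})}=R_{\mathbf{i}}\qquad\text{for all } \sigma\in\mathfrak{S}_N \text{ and all multi-indices } \mathbf{i}\in[N]^{pk}.$$
So $R$ is a function on $[N]^{pk}$ that is constant on $\mathfrak{S}_N$-orbits.

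\textbf{Step 1: classify the orbits.} Label the $pk$ slots as $(j,\ell)$ with $1\le j\le k$ and $1\le \ell\le p$. The orbits of $\mathfrak{S}_N$ on $[N]^{pk}$ are in bijection with the partitions $\pi$ of these slots into at most $N$ blocks. To a multi-index $\mathbf{i}$ we associate its kernel $\pi=\ker\mathbf{i}$, in which two slots lie in the same block iff they carry equal indices. Two multi-indices have the same kernel iff some permutation of $[N]$ maps one to the other. This is the only combinatorial input needed.

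\textbf{Step 2: expand $R$ over kernels.} Since $R_{\mathbf{i}}=c_\pi$ depends only on $\pi=\ker\mathbf{i}$, we get
$$R(T_1,\ldots,T_k)=\sum_{\pi} c_\pi\sum_{\mathbf{i}:\ker\mathbf{i}=\pi}\prod_{j}(T_j)_{\mathbf{i}^{(j)}}.$$
Fix $\pi$ and build the hypergraph $\mathfrak{h}_\pi$ as follows. Its vertex set is the set of blocks of $\pi$. Its hyperedges are $e_1,\ldots,e_k$, where $e_j$ is the multiset of blocks containing the slots $(j,1),\ldots,(j,p)$. This hypergraph is $p$-uniform, possibly with loops, which the paper allows.

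The inner sum runs over injective labelings of the blocks, so it is exactly the injective trace $\mathrm{Tr}^0_{\mathfrak{h}_\pi}(T_1,\ldots,T_k)$, in which the tensor $T_j$ is placed on hyperedge $e_j$. When $T_1=\cdots=T_k=T$, this is literally $\mathrm{Tr}^0_{\mathfrak{h}_\pi}(T)$. Hence $R$ is a linear combination of injective traces of $p$-uniform hypergraphs with $k$ hyperedges.

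\textbf{Step 3: pass to ordinary traces.} Apply Möbius inversion on the partition lattice:
$$\mathrm{Tr}^0_{\mathfrak{h}}=\sum_{\pi}\mu(0,\pi)\,\mathrm{Tr}_{\mathfrak{h}^\pi}.$$
Each $\mathfrak{h}^\pi$ is again $p$-uniform with $k$ hyperedges, so $R$ is also a linear combination of ordinary traces.

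\textbf{Remarks on the argument.} When the $T_j$ are symmetric, the order of vertices inside a hyperedge is irrelevant, which justifies using multisets. The $c_\pi$ may be merged across partitions giving isomorphic labeled hypergraphs, but this is not needed.

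The main point requiring care is Step 1 together with the correct reading of the statement. The statement says "linear in each component", so $R$ should be treated as a multilinear form. The "one tensor $T$" version, a homogeneous polynomial of degree $k$, then follows by polarization or by substituting $T_j=T$.

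A further subtlety arises when $N<pk$: partitions with more than $N$ blocks give empty orbits, so the corresponding injective traces vanish and cause no problem.
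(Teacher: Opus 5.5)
Your proposal is correct and follows essentially the same route as the paper: invariance makes the coefficient tensor of $R$ constant on each kernel class $\mathrm{Ker}(\mathbf{i})=\pi$, which gives $R=\sum_\pi R(\pi)T_\pi$. Each term is then the injective trace of the quotient by $\pi$ of the hypergraph made of $k$ disjoint hyperedges on $pk$ vertices, and M\"obius inversion handles ordinary traces; the only difference is cosmetic, since you read invariance directly off the coefficients rather than invoking Riesz representation and averaging over the uniform measure on $\mathfrak{S}_N$.
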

This is a straightforward extension of the matrix version of this theorem proved in \cite{collins_male_gaudreau}. A proof is given in Section \ref{sec:trace_inv}. Note also that the trace invariants associated to $p$-uniform and $2$-regular hypergraphs form a complete family of the polynomials invariant by the action of the orthogonal group $O_N$.
\begin{theorem}[Theorem $3.15$ in \cite{kunisky_moore_wein}]\label{thm:base_orthogonal}
    Let $P$ be a homogeneous polynomial application of degree $k$ invariant by the action of $O_N$, $i.e.$ for all $U \in O_N$, $P(U \cdot T)=P(T)$. Then $P(T)$ is a linear combination of the $\mathrm{Tr}_{\mathfrak{h}}(T)$ where the $\mathfrak{h}$ are $p$-uniform and $2$-regular hypergraphs.
\end{theorem}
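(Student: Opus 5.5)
The statement is the tensor analogue of first fundamental theorem of invariant theory for $O_N$, and we plan to prove it by classical invariant theory.

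\textbf{Step 1: reduce to multilinear invariants.} A homogeneous polynomial $P$ of degree $k$ in the entries of $T$ is the restriction to the diagonal of a unique symmetric $k$-linear form $R$ on $((\R^N)^{\otimes p})^{\otimes k}$, namely $P(T)=R(T,\ldots,T)$. This uses polarization, which works in characteristic zero. The form $R$ is $O_N$-invariant because $P$ is, and because the polarization identity commutes with the group action. Such an $R$ is the same thing as an $O_N$-invariant vector in $((\R^N)^{\otimes p})^{\otimes k}\cong(\R^N)^{\otimes pk}$, after identifying the space with its dual through the invariant scalar product.

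\textbf{Step 2: apply the first fundamental theorem for $O_N$.} By Brauer and Weyl, the invariants of $O_N$ in $(\R^N)^{\otimes m}$ vanish if $m$ is odd. For $m$ even they are spanned by the contraction tensors $\sum_{\phi}\prod_{\{a,b\}\in M}\delta_{\phi(a)\phi(b)}$, where $M$ ranges over the perfect matchings of $[m]$. If $pk$ is odd, then $P=0$. Otherwise $R$ is a linear combination of the forms
\[
(T^{(1)},\ldots,T^{(k)})\mapsto\sum_{\text{indices}}\prod_{j=1}^k T^{(j)}_{i_{(j,1)},\ldots,i_{(j,p)}}\prod_{\{a,b\}\in M}\delta_{i_a i_b},
\]
where $M$ is a perfect matching of the $pk$ slots $(j,l)$.

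\textbf{Step 3: read each term as a trace invariant.} For each matching $M$, we build a hypergraph $\mathfrak{h}_M$ as follows. Its vertices are the pairs of $M$, and each $j\in[k]$ gives a hyperedge made of the $p$ pairs containing the slots $(j,1),\ldots,(j,p)$. This hyperedge is a multiset of vertices, so repetitions, and hence loops, are allowed. The hypergraph is $p$-uniform with $k$ hyperedges. It is $2$-regular because each vertex is a pair of slots, so it has exactly two incidences. With these definitions, the multilinear form evaluated at $(T,\ldots,T)$ is exactly $\mathrm{Tr}_{\mathfrak{h}_M}(T)$. Symmetry of $T$ guarantees that the order of vertices inside a hyperedge does not matter, so the value is well defined. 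Summing over $M$ gives the claim.

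\textbf{Main obstacle.} The only nontrivial input is the first fundamental theorem in Step 2, which we take from the classical literature. An alternative route is to average over the Haar measure on $O_N$ and use Weingarten calculus, which expresses the projection onto the invariants as a combination of the same matchings. The remaining care goes into two checks. First, the bookkeeping that turns a matching into a $2$-regular multihypergraph must be consistent, including self-loops inside a single hyperedge. Second, the polarization must be compatible with the symmetry of $T$, so that the symmetric multilinear form restricts correctly to symmetric tensors.
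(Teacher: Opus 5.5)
Your proof is correct. The paper gives no proof of its own and defers to Weyl and Kunisky--Moore--Wein; your route (polarization, then the Brauer--Weyl first fundamental theorem for $O_N$, then reading each perfect matching as a $2$-regular hypergraph, i.e.\ the dual of the $p$-regular multigraph formulation the paper mentions) is the classical argument behind those references. The one detail to write out explicitly is that if $P$ is only defined on symmetric tensors, you should extend its polarization to all of $(\R^N)^{\otimes p}$ by precomposing each slot with the symmetrizer, which commutes with the $O_N$-action, before invoking the first fundamental theorem.
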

This is a classical result appearing already in \cite{wey66}. See \cite{kunisky_moore_wein} or \cite{bonnin_gote} for recent proofs adapted to our notations. Note that it is usually stated in the dual, but equivalent, version, with trace invariants associated to $p$-uniform multigraphs where the indices are carried by edges (instead of the vertices) and occurrences of the tensor are carried by vertices (instead of the hyperedges).

\subsection{Main results}
\paragraph{Tensor model.} Let $p \geq 2$ be fixed.
A sequence of {\em heavy-Wigner tensors} is a sequence $(T^{(N)})_{N \geq 1}$ of symmetric tensors of order $p$ and dimension $N$ with independent, not necessarily identically distributed, entries (up to symmetries) such that for all $i_1,\ldots,i_p$, for all $k\geq 2$, 
$$ \E [T_{i_1,\ldots,i_p}] = M_1 = 0 \quad \mathrm{and} \quad \sup_{i_1,\ldots,i_p}N^{p-1} \E [T_{i_1,\ldots,i_p}^k] = O(1) \text{ as } N \rightarrow \infty. $$
Moreover, for all $i_1<\ldots<i_p$ and $k \geq 2$, there exists $M_{k}$ such that when $N \rightarrow \infty$,
$$ N^{p-1} \E [ T_{i_1,\ldots,i_p}^{k}] \rightarrow M_{k} . $$

An example of a heavy-Wigner tensor is given by the (centered and normalized) adjacency tensor of an Erd\H{o}s-Rényi hypergraph where each hyperedge exists with probability $c/N^{p-1}$ for some constant $c>0$. Heavy-Wigner tensors also appear naturally when one truncates random tensors whose entries are heavy-tailed random variables, see Theorem \ref{thm:Levy}.

\paragraph{Convergence and CLT for the trace invariants.}
We establish a convergence result for the tensor models we defined above. Theorem \ref{thm: formes lineaires invariantes} shows that it is also a convergence result for any permutation invariant linear form.

\begin{theorem}\label{thm: trace injective heavy wigner}
    Let $\mathfrak{h}=(V,E)$ be a connected (test-) hypergraph $p$-uniform and $\pi$ be a partition of its vertices. Let $T$ be a heavy-Wigner tensor. Then, we have
    \begin{equation}\label{eq:limite_trace_inj_heavy_wig}
        \lim_{N\rightarrow\infty}\E\left[\mathrm{tr}^{0}_{\mathfrak{h}^\pi}(T)\right]=\1_{\mathfrak{h}^\pi \text{ is a fat hypertree}} \prod_{r \in \N} M_{r}^{q_{r}}.
    \end{equation}
\end{theorem}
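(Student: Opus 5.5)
Write $\hh' = \hh^\pi = (V',E')$. This hypergraph is connected, so $\mathrm{tr}^0_{\hh'}(T) = N^{-1}\mathrm{Tr}^0_{\hh'}(T)$. We expand the expectation over injective labellings:
\[
\E\left[\mathrm{tr}^0_{\hh'}(T)\right] = \frac{1}{N}\sum_{\phi : V' \to [N] \text{ injective}} \E\Big[\prod_{e\in E'} T_{\phi(e)}\Big].
\]
Since $\phi$ is injective, the map $e\mapsto \phi(e)$ identifies two hyperedges exactly when they have the same vertex multiset in $\hh'$. Group the hyperedges of $\hh'$ into classes of equal multisets. Let $\tilde E$ be the set of distinct hyperedges and $m(f)$ the multiplicity of $f\in\tilde E$. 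By independence up to symmetry,
\[
\E\Big[\prod_{e\in E'} T_{\phi(e)}\Big] = \prod_{f\in\tilde E}\E\left[T_{\phi(f)}^{m(f)}\right].
\]
This vanishes if some $m(f)=1$, since the entries are centered. Otherwise the uniform bound $\sup N^{p-1}\E[T^k]=O(1)$ makes each factor $O(N^{-(p-1)})$. The number of injective $\phi$ is $N(N-1)\cdots(N-|V'|+1)\sim N^{|V'|}$. Hence
\[
\E\left[\mathrm{tr}^0_{\hh'}(T)\right] = O\left(N^{|V'|-1-(p-1)|\tilde E|}\right).
\]

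The key combinatorial step concerns the simple hypergraph $\tilde{\hh}=(V',\tilde E)$, which is connected. Here the "distinct hyperedges" are distinct as multisets, and a hyperedge may contain a repeated vertex. Consider the bipartite incidence graph $B$ with vertex set $V'\sqcup\tilde E$ and one edge $(v,f)$ for each distinct vertex $v$ of $f$. Each $f$ has at most $p$ distinct vertices, so $B$ has at most $p|\tilde E|$ edges. Since $B$ is connected,
\[
|V'|+|\tilde E|-1 \le \#\mathrm{edges}(B) \le p|\tilde E|, \qquad\text{i.e.}\qquad |V'|-1-(p-1)|\tilde E|\le 0.
\]
Equality holds iff both bounds are tight. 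That means every hyperedge has $p$ distinct vertices, and $B$ is a tree, i.e.\ $\tilde{\hh}$ is a hypertree. So $\hh'$ is a fat hypertree exactly when the exponent is $0$; in every other case the expectation is $O(N^{-1})\to 0$.

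This counting inequality and its equality case are the main obstacle. Loops and repeated vertices inside a hyperedge must be handled through the definition of $B$ with distinct vertices. One also has to check that multiplicity-one hyperedges, which kill the term, are consistent with the claimed indicator. They are: a fat hypertree with some $q_1>0$ gives $M_1^{q_1}=0$ on the right-hand side, matching $0$ on the left.

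In the fat hypertree case, every $f\in\tilde E$ consists of $p$ distinct vertices. Hence $\phi(f)$ has distinct coordinates and, after sorting, is some $i_1<\dots<i_p$. By the convergence hypothesis, $N^{p-1}\E[T_{\phi(f)}^{m(f)}]\to M_{m(f)}$. The uniform bound allows dominated convergence over the labellings; more simply, the hypothesis as stated gives convergence for each index tuple, and the same limit $M_k$ for all tuples. We need this convergence uniformly in $\phi$. If uniformity is not automatic from the hypothesis, I would interpret $M_k$ as a uniform limit, or average over $\phi$ by a Cesàro-type argument combined with the uniform $O(1)$ bound. Then
\[
\E\left[\mathrm{tr}^0_{\hh'}(T)\right] = \frac{N^{\underline{|V'|}}}{N\cdot N^{(p-1)|\tilde E|}}\cdot\frac{1}{N^{\underline{|V'|}}}\sum_{\phi}\prod_{f\in\tilde E} N^{p-1}\E\left[T_{\phi(f)}^{m(f)}\right] \longrightarrow \prod_{f\in\tilde E} M_{m(f)} = \prod_r M_r^{q_r}.
\]
The prefactor tends to $1$ because $|V'|-1=(p-1)|\tilde E|$ in the fat hypertree case.
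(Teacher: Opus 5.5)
Your proof is correct and follows essentially the paper's argument: you expand over injective labellings, factor by independence into one $N^{-(p-1)}\bigl(M_{m(f)}+o(1)\bigr)$ term per distinct hyperedge, count $N^{|V'|}(1+O(1/N))$ injections, and conclude with the Euler inequality on the incidence graph (Lemma~\ref{lem: euler}); your distinct-vertex incidence graph, and your use of the $\sup$ bound before passing to the limit, handle loops and repeated indices more carefully than the paper, which applies the limit $M_{r(a)}$ even to hyperedges with repeated vertices. The one point to fix is uniformity: the paper tacitly takes the $o(1)$ to be uniform over index tuples, and that is the reading you must adopt, because your dominated-convergence and Ces\`aro alternatives do not work---convergence for each fixed tuple gives no control over tuples with large indices, and for any fixed $K$ almost every injective $\phi$ takes values outside $[K]$.
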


\begin{corollary}\label{cor:trace_injective_heavy_wig}
    Let $\mathfrak{h}=(V,E)$ be a connected (test-) hypergraph $p$-uniform $2$-regular, $\pi$ be a partition of its vertices and $T$ be a heavy-Wigner tensor. Then, we have
    \begin{equation}
        \lim_{N\rightarrow\infty}\E\left[\mathrm{tr}^{0}_{\mathfrak{h}^\pi}(T)\right]=\1_{\mathfrak{h}^\pi \text{ is a } 2\text{-divisible hypertree}} \prod_{r \in \N} M_{r}^{q_{r}}.
    \end{equation}
\end{corollary}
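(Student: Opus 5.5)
The corollary is a direct specialization of Theorem \ref{thm: trace injective heavy wigner}, so the plan is to apply that theorem and then identify which fat hypertrees can occur when $\mathfrak{h}$ is $2$-regular. A $2$-regular hypergraph $\mathfrak{h}$ is in particular $p$-uniform, and $\pi$ is a partition of its vertices. Theorem \ref{thm: trace injective heavy wigner} therefore gives the limit $\1_{\mathfrak{h}^\pi \text{ is a fat hypertree}} \prod_r M_r^{q_r}$. Since $M_1=0$, this limit vanishes whenever some hyperedge of $\mathfrak{h}^\pi$ has multiplicity $1$. What remains is to show the following claim: if $\mathfrak{h}$ is $2$-regular and $\mathfrak{h}^\pi$ is a fat hypertree, then every multiplicity is even. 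Granting this, the two indicators agree, because a $2$-divisible hypertree is by definition a fat hypertree with even multiplicities. The product is identical in both formulas, since $q_r=0$ for all odd $r$ anyway.

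The key step is a parity argument based on the degrees in $\mathfrak{h}^\pi$. Identifying vertices adds their degrees, so every vertex of $\mathfrak{h}^\pi$ has even degree: it is $2$ times the size of its block in $\pi$. Let $\bar H$ be the underlying simple hypertree, with distinct hyperedges $e$ of multiplicity $m_e$. We may also need to handle repeated vertices inside a hyperedge; a vertex appearing $j$ times in $e$ contributes $j m_e$ to its degree. I will argue by stripping leaves. Take a hyperedge $e$ of $\bar H$ that is a leaf in the incidence tree, meaning at most one of its vertices lies in another hyperedge. If $\bar H$ has a single hyperedge, every vertex lies only in $e$, with some multiplicity $j\ge 1$. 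If $\bar H$ has at least two hyperedges, any vertex of $e$ other than the attaching one is private to $e$, and such a vertex exists because $p\ge 2$ and the hypertree is simple. In both cases we obtain a vertex $v$ whose degree is $j_v m_e$, where $j_v$ is the multiplicity of $v$ in $e$.

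I expect the main obstacle to be exactly these repeated indices within a hyperedge. If $j_v$ is even, the fact that $j_v m_e$ is even gives no information about $m_e$. To deal with this, I would first check how the paper treats simple hypergraphs. For a hypertree to be simple, its incidence bipartite graph must be a tree with no multiple edges, so no vertex appears twice in a hyperedge. This forces $j_v=1$, and then $m_e$ is even. After that, I remove all copies of $e$ together with its private vertices. The remaining vertices keep even degree: the attaching vertex $u$ loses $j_u m_e$, which is even once we know $m_e$ is even. Induction on the number of distinct hyperedges then shows that all multiplicities are even, which completes the proof.
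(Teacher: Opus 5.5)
Your proposal is correct and follows the paper's argument: apply Theorem \ref{thm: trace injective heavy wigner}, then use a leaf-stripping induction on the underlying hypertree to show that every hyperedge multiplicity is even. You also supply the details the paper leaves implicit. A block $B$ of $\pi$ has degree $2|B|$ in $\mathfrak{h}^\pi$, and since the underlying simple hypertree is acyclic, no vertex is repeated inside a hyperedge; these are what make the paper's ``immediate induction'' work.
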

This theorem and its corollary are proved in Section \ref{sec:conv_trace_inv}. The convergence comes with a central limit theorem in the special case where the entries are i.i.d.
\begin{theorem}\label{thm: CLT for tensors}
    Let $T$ be a heavy-Wigner tensor with i.i.d. entries and let $$(Z_N(\hh))_\hh:=\left(\frac{1}{\sqrt{N}}(\mathrm{Tr}^{0}_\hh(T)-\E\mathrm{Tr}^0_\hh(T))\right)_\hh,$$ be a process indexed by $p$-uniform test hypergraphs $\hh$. This process converges to a centered Gaussian process $(z_\hh(T))_\hh$ whose covariance is given by
    $$\E\left(z_{\hh_1}(T)z_{\hh_2}(T)\right)=\sum_{\hh\in\mathcal{P}_\#(\hh_1,\hh_2)}\lim_{N\rightarrow\infty}\E\frac{1}{N}\mathrm{Tr}^{0}_{\hh}(T),$$
    where $\mathcal{P}_\#(\hh_1,\hh_2)$ is the set of hypergraphs obtained by gluing vertices of $\hh_1$ and $\hh_2$ together so that they have at least one hyperedge in common.
\end{theorem}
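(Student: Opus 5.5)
We use the method of moments: we show that all joint moments of the centered process converge to those of a centered Gaussian process, and the Gaussian moments are identified through Wick's formula. Fix test hypergraphs $\hh_1,\ldots,\hh_m$ (connected, without loss of generality, since $\mathrm{Tr}^0$ of a disconnected graph decomposes into sums of products handled similarly). Expanding each centered term gives
\[
\E\Big[\prod_{j=1}^m Z_N(\hh_j)\Big]=N^{-m/2}\sum_{\phi_1,\ldots,\phi_m}\E\Big[\prod_{j=1}^m\big(T_{\phi_j}(\hh_j)-\E T_{\phi_j}(\hh_j)\big)\Big],
\]
where $\phi_j$ is injective on $V(\hh_j)$ and $T_{\phi_j}(\hh_j)=\prod_{e\in E(\hh_j)}T_{\phi_j(e)}$. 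We group the tuples $(\phi_1,\ldots,\phi_m)$ according to the partition $\pi$ of $\bigsqcup_j V(\hh_j)$ induced by coincidences of images. Gluing the $\hh_j$ along $\pi$ yields a hypergraph $\hh^\pi$. Each $\hh_j$ maps injectively into it, so $\pi$ identifies only vertices from different $\hh_j$'s.

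\textbf{Step 1: vanishing by centering.} By independence of entries (up to symmetry), the expectation factorizes over the connected components of the \emph{dependency graph}. This graph has vertex set $\{1,\ldots,m\}$, with $j\sim j'$ when $\hh_j$ and $\hh_{j'}$ share a hyperedge in $\hh^\pi$ (that is, an identical image $p$-tuple). If some $j$ is isolated in it, the centered factor has zero mean and the term vanishes. So every block of the dependency graph has size at least $2$.

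\textbf{Step 2: power counting.} We bound each group as in the proof of Theorem~\ref{thm: trace injective heavy wigner}. The sum over $\phi$ contributes $O(N^{|V(\hh^\pi)|})$. Each distinct hyperedge $e$ of $\hh^\pi$ with multiplicity $r_e\ge1$ contributes $O(N^{-(p-1)})$ if $r_e\ge2$ and $0$ if $r_e=1$, by centering. For the subtracted products, a distinct hyperedge can have multiplicity one in one factor only if it is covered again in another factor. Even then the factorization produces at most the same power as the full product, since $\E[T^{a}]\E[T^{b}]=O(N^{-2(p-1)})\le O(N^{-(p-1)})$.

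Let $\hh^\pi$ have connected components $C_1,\ldots,C_\ell$, one for each block of the dependency graph at the level of shared vertices. The key combinatorial estimate, borrowed from the proof of Theorem~\ref{thm: trace injective heavy wigner}, is that a connected hypergraph $C$ with $d$ distinct hyperedges, all of multiplicity at least $2$, satisfies $|V(C)|\le (p-1)d+1$. Equality holds iff $C$ is a fat hypertree. Hence each component contributes $O(N^{1})$, and the total is $O(N^{\ell-m/2})$.

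Shared hyperedges force each component to contain at least two of the $\hh_j$'s, so $\ell\le m/2$. Equality $\ell=m/2$ requires a perfect matching of $\{1,\ldots,m\}$ into pairs $(j,j')$ such that each glued pair $\hh_j\cup_\pi\hh_{j'}$ is a fat hypertree with at least one common hyperedge. All other configurations are $o(1)$.

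We must also check that components touching only through a vertex, and not through a hyperedge, do not spoil the count. The fat hypertree bound is applied to the whole vertex-connected component, so extra vertex identifications across pairs only lower $|V|$.

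\textbf{Step 3: identification.} In the leading configurations the expectation factorizes over pairs, and the subtracted means are negligible there: $\E T_{\phi_j}(\hh_j)\,\E T_{\phi_{j'}}(\hh_{j'})$ splits the tree into two parts, which loses a factor $N$. Each pair therefore contributes
\[
\sum_{\hh\in\mathcal P_\#(\hh_j,\hh_{j'})}\lim_N \E\,\tfrac1N\mathrm{Tr}^0_{\hh}(T).
\]
Theorem~\ref{thm: trace injective heavy wigner} gives existence of these limits, and the sum automatically restricts to fat hypertrees. The i.i.d. assumption ensures that these limits depend only on the shape $\hh$, and not on the location of the indices, so they are genuine constants. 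Summing over perfect matchings gives exactly Wick's formula with the stated covariance. Since Gaussian laws are determined by their moments, we obtain convergence of finite-dimensional distributions.

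The main obstacle is Step 2. It requires a careful bound for the centered products, where hyperedges of multiplicity one in the glued graph can occur inside mixed terms $\prod(X_j-\E X_j)$. It also requires showing rigorously that $\ell<m/2$, or a non-tree component, costs at least a factor $N^{-1/2}$. My first attempt would be to expand each product $\prod(X_j-\E X_j)$ into subsets and apply the vertex/hyperedge inequality to each resulting glued graph separately.
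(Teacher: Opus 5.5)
Your proposal is correct and follows essentially the same route as the paper: you group the injective maps by their coincidence partition, discard partitions in which some $\hh_j$ shares no hyperedge with another one (the paper's $\mathcal{P}_\#$), and power count through Euler's inequality on the vertex-connected components together with $\ell\le m/2$, which is exactly the paper's decomposition of the exponent into $\eta_2+\eta_3$. The step you flag as the main obstacle is handled in the paper just as you suggest, by expanding the centered product over subsets $B\subset\{1,\ldots,m\}$: each subtracted term carries an extra factor $N^{\eta_1}$ with $\eta_1\le 0$, and the inequality is strict unless $B=\{1,\ldots,m\}$, which your Steps 2 and 3 already essentially establish; note also that, as in the paper, the argument genuinely requires the $\hh_j$ to be connected, so this is a real restriction rather than a ``without loss of generality'' reduction (for disconnected $\hh$ the $1/\sqrt{N}$ scaling is not the right one).
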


It is proved in Section \ref{sec:CLT_tensor}. The theorems above lead to the following applications.

\paragraph{Sparse Erd\H{o}s-Rényi hypergraphs.} 

A $p$-uniform {\em Erd\H{o}s-Rényi hypergraph} on $N$ vertices with parameter $\alpha \in (0,1)$ is the random simple hypergraph with vertex set $V=\{1,\cdots,N\}$ and for each $1\leq i_1<\cdots<i_p\leq N$, the hyperedge $\{i_1,\dots,i_p\}$ is a hyperedge of $H$ with probability $\alpha$ independently for all such $p$-tuple.

In the following, we will consider a sequence of Erd\H{o}s-Rényi hypergraphs with parameter $(\alpha_N)$ and $T=(T_N)$ the sequence of their adjacency tensors. We also denote $c_N=N^{p-1} \alpha_N$. 
\begin{theorem}\label{cvlf_er}
    Let $H_N$ be a sequence of Erd\H{o}s-Rényi hypergraphs with parameter $\alpha_N=c/N^{p-1}$, then $H_N$ converges locally weakly towards the uniform Galton-Watson hypertree with offspring distribution $\mathrm{Poi}\left(c / (p-1)!\right)$ in the sense of \cite{delgosha_anantharam}.
\end{theorem}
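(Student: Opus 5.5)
We prove Theorem \ref{cvlf_er} by the method of moments on rooted neighbourhoods, in the sense of \cite{delgosha_anantharam}. Local weak convergence means the following. For every $r\geq 1$ and every finite rooted $p$-uniform hypergraph $(\hh,o)$ of depth at most $r$, the fraction of vertices $v\in[N]$ whose $r$-ball $B_r(H_N,v)$ is isomorphic to $(\hh,o)$ must converge in probability to $\P(B_r(\mathrm{GW},o)\simeq(\hh,o))$. Here $\mathrm{GW}$ is the uniform Galton--Watson hypertree with offspring law $\mathrm{Poi}(c/(p-1)!)$. The plan has three steps.

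\textbf{Step 1: the ball is a hypertree, with the right law.} Fix $r$. We explore $B_r(H_N,v)$ breadth-first from $v$. A given vertex $u$ lies in $\binom{N-1}{p-1}$ potential hyperedges, each present with probability $\alpha_N=c/N^{p-1}$. Hence the number of new hyperedges at $u$ is $\mathrm{Bin}\bigl(\binom{N-1}{p-1}-o(N^{p-1}),\alpha_N\bigr)$, which converges to $\mathrm{Poi}(c/(p-1)!)$. By conditional independence of unexplored hyperedges, successive generations are asymptotically independent Poisson counts. Each new hyperedge brings $p-1$ fresh vertices with probability tending to $1$.

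A cycle in the ball has one of two forms: a hyperedge sharing two vertices with the explored set, or two hyperedges sharing two vertices. Its probability is bounded by the expected number of such configurations. The expected number of connected sub-hypergraphs containing $v$ with $k$ hyperedges and excess $\geq 1$ is at most $C_k N^{|V|-1}\alpha_N^{k}$. The excess condition gives $|V|-1\le k(p-1)-1$, so this is $O(1/N)$. This is the same power count as in the proof of Theorem \ref{thm: trace injective heavy wigner}, where only fat hypertrees survive and here all multiplicities equal $1$. Summing over $k$ up to a tail controlled by a Poisson-type moment bound on ball sizes gives
\[
\P\bigl(B_r(H_N,v)\text{ is not a hypertree}\bigr)\to 0 .
\]
Together with the Poisson limit of generation sizes, this yields $\P(B_r(H_N,1)\simeq(\hh,o))\to\P(B_r(\mathrm{GW},o)\simeq(\hh,o))$. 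The uniform attachment of the $p-1$ new vertices matches the ``uniform'' hypertree structure.

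\textbf{Step 2: concentration.} Set $X_N=\frac1N\sum_v\1_{B_r(H_N,v)\simeq(\hh,o)}$. Step 1 gives $\E X_N\to\P(B_r(\mathrm{GW},o)\simeq(\hh,o))$, so it remains to show $\mathrm{Var}(X_N)\to0$. This requires, for two independent uniform vertices $v,w$, that the events $\{B_r(H_N,v)\simeq(\hh,o)\}$ and $\{B_r(H_N,w)\simeq(\hh,o)\}$ are asymptotically independent. Two facts give this:
\begin{itemize}
\item the probability that $B_r(H_N,v)$ and $B_r(H_N,w)$ intersect is $O(1/N)$, since the balls have tight size and a fixed vertex is hit with probability $O(1/N)$;
\item on the disjointness event, the two explorations use disjoint sets of hyperedges.
\end{itemize}
Hence $\E X_N^2-(\E X_N)^2\to0$, and Chebyshev's inequality gives convergence in probability. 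Alternatively, we could write $X_N$ via injective trace invariants of the adjacency tensor, as in Theorem \ref{thm: trace injective heavy wigner}, and obtain the variance bound from a CLT-type estimate analogous to Theorem \ref{thm: CLT for tensors}.

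\textbf{Step 3: tightness.} The main obstacle is controlling ball sizes uniformly in $N$. The sums over $k$ in Step 1 and the intersection bound in Step 2 both need this. We will dominate the exploration process by a Galton--Watson process with offspring $(p-1)\cdot\mathrm{Bin}\bigl(\binom{N}{p-1},\alpha_N\bigr)$. Its generation sizes have exponential moments bounded uniformly in $N$, which makes all error terms summable. With these three steps, the convergence holds for every $r$ and every rooted hypergraph $(\hh,o)$, which is exactly local weak convergence in the sense of \cite{delgosha_anantharam}.
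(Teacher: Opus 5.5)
Your proof is correct, but it takes a different route from the paper's. The paper runs no exploration process. It works with the annealed measure $\mu_N=\frac{1}{N}\sum_i\E\,\delta_{[H_N(i),i]}$. It removes non-hypertree patterns with the same Euler power count as your Step~1, written as $\E\frac{1}{N}\mathrm{Tr}^0_{\hh}(T)\to 0$ for simple $\hh$ containing a cycle and combined with Lemma~4 of \cite{delgosha_anantharam}. For a rooted hypertree $(X,v)$ of depth at most $r$, it then computes $\mu_N(A_r(X,v))$ exactly. It sums over injective embeddings and over the $L=\binom{N-|V(X)|+|V_\partial|}{p}$ hyperedges that are still allowed, which collapses to the factor $(1-\alpha_N)^{\binom{N}{p}-|E(X)|-L}$. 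This gives $\frac{1}{|\mathrm{Aut}(X,v)|}e^{-\frac{c}{(p-1)!}(|V(X)|-|V_\partial|)}c^{|E(X)|}(1+o(1))$, matched against the same closed formula for the Poisson hypertree.

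That enumeration needs no control of ball sizes, since pointwise convergence to a probability measure on a countable set suffices. However, it only yields convergence of the \emph{expected} neighbourhood distribution. Your route, a Poisson limit for the exploration followed by a second-moment bound using the domination of Step~3, gives convergence in probability of the empirical neighbourhood distribution. That is stronger, and it is the usual meaning of local weak convergence for random hypergraphs.

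Two small points. In Step~1 you do not need a tail sum over $k$. The BFS hypertree of the $r$-ball plus one extra incidence already gives a non-hypertree witness containing $v$ with at most $2r+1$ hyperedges. In Step~2, disjointness of the balls is not by itself independence. Conditioning on $B_r(H_N,v)=S$ forces every other hyperedge through an interior vertex of $S$ to be absent. The clean formulation is as follows. Given that event, the hyperedges avoiding the interior of $S$ form an Erd\H{o}s--R\'enyi hypergraph on $N-O(1)$ vertices. Then $B_r(H_N,w)$ equals the $r$-ball of $w$ in that hypergraph unless that ball meets $S$, which has probability $O(1/N)$.
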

The proof of this result and the precise definitions are given in Section \ref{sec:conv_cvlf}. 
Now we define the pair-adjacency matrix of the Erd\H{o}s-Rényi hypergraph $H_N$,
$$ (A_N)_{i,j} = \1 \{i \neq j \}
        \sum_{e \in E(H_N)} \1 \{i,j \in e \} .  $$
and its normalized counterpart
$$ (\Tilde{A}_N)_{i,j} = \frac{A_N-\E[A_N]}{\sqrt{N\mathrm{Var}((A_N)_{1,2})}} .$$

\begin{theorem}\label{thm: convergence erdos renyi}
    Let $H_N$ be a $p$-uniform Erd\H{o}s-Rényi hypergraph with parameter $\alpha_N =c_N /N^{p-1}$ and let $\mu_N$ be the ESD of $\Tilde{A}_N$ as defined previously. Then, as $N \rightarrow \infty$, 
    \begin{itemize}
        \item[-] if $c_N \rightarrow \infty$ and $N^{p-1}(1-\alpha_N)\rightarrow\infty$, then $\mu_N \rightarrow \mu_{\mathrm{sc}}$ weakly in probability, where $\mu_{\mathrm{sc}}$ is the semi-circular measure supported on $[-2,2]$,
        \item[-] if $c_N \rightarrow c >0$, then $\mu_N \rightarrow \mu_{c}$ weakly in probability, where $\mu_c$ is a measure uniquely characterized by its moments, depending only on $c$ and with unbounded support. 
    \end{itemize}
    Moreover, up to normalization by some constant (depending on $c,p$), the measure $\mu_c$ is the expectation of the spectral measure of the adjacency operator of the clique expansion of the uniform Galton-Watson hypertree of reproduction law $\mathrm{Poi}(c/(p-1)!)$.
\end{theorem}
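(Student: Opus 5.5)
The approach is the method of moments applied to $\frac1N\mathrm{Tr}(\tilde A_N^k)$, organised through the tensor trace invariants of Theorem \ref{thm: trace injective heavy wigner}.

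\textbf{Step 1: rewrite the moments as trace invariants.}
Let $T$ be the adjacency tensor, centered and normalised as
\[
\tilde T=(T-\E T)/\sqrt{\alpha_N(1-\alpha_N)}\cdot N^{-(p-1)/2}.
\]
Since $A_N-\E A_N$ is a contraction of $T-\E T$, each entry of $\tilde A_N$ can be written as
\[
(\tilde A_N)_{ij}=\kappa_N\,\1\{i\ne j\}\sum_{l_3,\ldots,l_p}\tilde T_{i,j,l_3,\ldots,l_p},
\]
with an explicit constant $\kappa_N$. Here the sum runs over distinct $l$'s different from $i,j$, and we divide by $(p-2)!$ for orderings.

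Expanding $\frac1N\mathrm{Tr}\,\tilde A_N^k$ then gives a linear combination of $\frac1N\mathrm{Tr}_{\hh_k}(\tilde T)$. The test hypergraph $\hh_k$ is a "hyper-cycle": $k$ hyperedges, consecutive ones sharing one vertex, with $p-2$ private vertices each. Splitting by partitions $\pi$ as in the definition of injective traces, we get
\[
\E\tfrac1N\mathrm{Tr}\,\tilde A_N^k=\sum_\pi c_\pi\,\E\,\mathrm{tr}^0_{\hh_k^\pi}(\tilde T),
\]
where the constraints $i\ne j$ and distinctness inside an edge exclude some $\pi$.

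\textbf{Step 2: the sparse regime $c_N\to c$.}
In this regime $\tilde T$, suitably rescaled, is heavy-Wigner with $M_r$ determined by $c$; for centered Bernoulli entries, $M_r\to c\cdot(\text{const})$ for all $r\ge2$. Theorem \ref{thm: trace injective heavy wigner} then gives
\[
\lim \E\tfrac1N\mathrm{Tr}\,\tilde A^k=\sum_{\pi:\ \hh_k^\pi \text{ fat hypertree}}\prod_r M_r^{q_r}.
\]
This limit is a finite sum in which every term has weight proportional to $c^{\#\text{distinct edges}}$. I would identify it combinatorially with closed walks of length $k$ from the root in the clique expansion of the Poisson Galton--Watson hypertree: a walk traverses hyperedges, and each distinct hyperedge contributes a Poisson factor. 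This identification would use the local weak convergence of Theorem \ref{cvlf_er}, or a direct count.

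For the ESD itself, I would show the variance of $\frac1N\mathrm{Tr}\,\tilde A^k$ is $O(1/N)$. This follows from the CLT scaling of Theorem \ref{thm: CLT for tensors}: fluctuations of $\mathrm{Tr}^0$ are of order $\sqrt N$, so after dividing by $N$ the variance is $O(1/N)$. Convergence in probability of the moments follows.

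Uniqueness of $\mu_c$ comes from a Carleman bound. The number of fat hypertrees with $k$ edge-occurrences is at most $C^k k^{k/2}$-type, so the moments grow like $(Ck)^{k}$ at most, which suffices. For unbounded support, I would exhibit moments growing superexponentially. Walks concentrated on a star of $m$ distinct edges around the root contribute about $c^m m^{k}$, and optimizing over $m$ shows $m_{2k}^{1/2k}\to\infty$.

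\textbf{Step 3: the dense regime $c_N\to\infty$, $N^{p-1}(1-\alpha_N)\to\infty$.}
Now $N^{p-1}\E[\tilde T^r]\approx (c_N(1-\alpha_N))^{1-r/2}$, which tends to $0$ for $r\ge3$ and equals $1$ for $r=2$. The proof of Theorem \ref{thm: trace injective heavy wigner} applies with $M_2=1$ and $M_r=0$ for $r\geq 3$, which needs some uniformity: for non-fat-hypertree terms the power counting only uses $\sup N^{p-1}\E|\tilde T^r|=O(1)$, and that fails for large $r$. I would handle this by bounding $N^{p-1}\E|\tilde T|^r\le (c_N(1-\alpha_N))^{1-r/2}\le 1$, which is still uniformly bounded, so the argument goes through.

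The surviving terms are therefore $2$-fold hypertrees. A double-traversal structure on the hypercycle forces a pairing of the $k$ steps that is non-crossing, and each pair identifies two hyperedges whose private vertices must match. With $\kappa_N$ chosen so that $\mathrm{Var}$ is normalised, each such pairing contributes exactly $1$ asymptotically, giving Catalan numbers and hence $\mu_{\mathrm{sc}}$.

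\textbf{Main obstacle.}
The hardest part is the bookkeeping in Step 1–2. Two hyperedges sharing two vertices $i,j$ but with different private vertices produce non-hypertree cycles, which vanish. The real difficulty is the exact combinatorial bijection between fat-hypertree quotients of the hyper-cycle and closed walks in the clique expansion of the GW hypertree, including the multiplicity factors $(p-2)!$ and the normalisation constant. I would attack this by first treating $p=2$, where it reduces to the known Erd\H{o}s–Rényi/Poisson tree case. Then I would lift to general $p$ by noting that each hyperedge of the GW hypertree carries $p-1$ children, each reachable from the parent with weight $1$.
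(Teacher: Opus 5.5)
Your proposal is correct and follows essentially the same route as the paper. You show the centered, normalized Erd\H{o}s--R\'enyi tensor is heavy-Wigner: $M_r=c^{1-r/2}$ in the sparse regime, and $M_2=1$, $M_r=0$ for $r\geq 3$ in the dense regime, via your bound $(c_N(1-\alpha_N))^{1-r/2}$. The paper then does what you do: it reads the moments of $\tilde{A}_N$ off the fat-hypertree quotients of the hyper-cycle $\hh_k$, takes the $O(1/N)$ variance from the CLT moment computation, and uses the Catalan count (with $(p-2)!$ private-vertex matchings per pair), Carleman's criterion and a star-hypertree lower bound. For the Galton--Watson identification the paper uses your ``direct count'' option, namely Poisson factorial moments $\E[D(D-1)\cdots(D-r+1)]=(c/(p-1)!)^{r}$ times $(p-1)!$ labelings per hyperedge, divided by $|\mathrm{Aut}(H,o)|$. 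That is the right choice, since local weak convergence alone would not control the unbounded closed-walk counts without an extra uniform integrability argument.
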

This theorem is proved in Section \ref{sec:ER_hypergraph}. The characterization of the measure $\mu_c$ is proved in Section \ref{sec: limit operator contraction}. Remark that the condition $N^{p-1}(1-\alpha_N)\rightarrow\infty$ is in particular satisfied as soon as $\alpha_N \to \alpha \in [0,1)$.

\paragraph{Lévy tensors.}

Let $X_N$ be a symmetric random tensor with independent entries in the domain of attraction of an $\alpha$-stable law, $\alpha \in (0,2)$. Thus, for some slowly varying function $L$,
$$\P(|X_{i_1,\ldots,i_p}|\geq u)=\frac{L(u)}{u^{\alpha}}. $$
Define $a_{N}:=\mathrm{inf}\left(u,\;\P(|X_{i_1,\ldots,i_p}|\geq u)\leq\frac{1}{N}\right)$. Then, a {\em Lévy tensor} $T_{N}$ is defined as 
$$ T_{N}:=a_{N^{p-1}}^{-1}X_N. $$
Note that the entries of a Lévy tensor have no second moment.
We assume that the tail-balance parameter $$\theta:=\lim_{t\rightarrow\infty}\frac{\P(x_{\mathbf{i}}>t)}{\P(|x_{\mathbf{i}}|>t)} \in [0,1]$$
exists. For $B >0$, we define $X_N^B$ the truncated tensor with entries $X_{i_{1},\cdots,i_{p}} \mathbf{1}_{| X_{i_{1},\cdots,i_{p}} | \leq B a_{N^{p-1}}}$ and $T_N^B := a_{N^{p-1}}^{-1}X_{N}^B- \mathbb{E}[a_{N^{p-1}}^{-1}X_{N}^B]$. 
Let $\Tilde{A}_N^B$ be the normalized contraction of $T_N^B$ as in \eqref{eq:matrixcontraction}. 

\begin{theorem}\label{thm:Levy}
    With the above notations, the following hold,
    \begin{enumerate}
        \item For each fixed $B>0$, $\mu_{\Tilde{A}^B}$ converges weakly in probability, as $N \rightarrow \infty$, to a probability measure $\mu^B$ uniquely determined by its moments and having unbounded support.
        \item The measures $\mu^B$ converge weakly to a limiting measure  $\mu_{\alpha,\theta}$ as $B \rightarrow \infty$.
    \end{enumerate}
    In general the odd limiting moments depend on $\theta$; in the symmetric-tail case $\theta=1/2$, all odd moments vanish and the limiting law is symmetric.
\end{theorem}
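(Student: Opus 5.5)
The plan is to reduce both statements to the heavy-Wigner machinery, that is, to Theorem \ref{thm:measureHeavyWigner} for the contracted matrix together with the moment convergence from Theorem \ref{thm: trace injective heavy wigner}. We then handle the limit $B\to\infty$ with a tightness and Cauchy argument in the spirit of Ben Arous--Guionnet.

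\textbf{Step 1: $T_N^B$ is a heavy-Wigner tensor.} By construction $T_N^B$ is symmetric with independent, centered entries. For $k\ge 2$ we compute $N^{p-1}\E[(a_{N^{p-1}}^{-1}X\1_{|X|\le Ba_{N^{p-1}}})^k]$. Write $n=N^{p-1}$. Regular variation of the tail and the definition of $a_n$ give $n\P(|X|\ge ua_n)\to u^{-\alpha}$. Integrating by parts (Karamata), we get
\[
n\E[(X/a_n)^k\1_{|X|\le Ba_n}] \to M_k^B:=\bigl(\theta+(-1)^k(1-\theta)\bigr)\frac{\alpha}{k-\alpha}B^{k-\alpha}.
\]
This is finite since $k\ge 2>\alpha$, and the supremum over indices is $O(1)$. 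The centering shift is of size $\E[a_n^{-1}X\1_{|X|\le Ba_n}]$, which is $O(n^{-1}a_n^{-1}\cdot a_n)$-small relative to the needed scale when $\alpha\in(1,2)$. Here we use that $\E X=0$ or that we subtract it; in general it is $o(1)$ uniformly enough that it perturbs the moments $M_k^B$ by $o(n^{-1})$. This is checked by expanding binomially, where the cross terms carry extra factors of the mean. Hence $T_N^B$ is heavy-Wigner with parameters $(M^B_k)_k$.

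\textbf{Step 2: item 1.} Theorem \ref{thm:measureHeavyWigner} applied to $T_N^B$ gives weak convergence in probability of $\mu_{\Tilde A^B}$ to a measure $\mu^B$. The moments of $\mu^B$ are given by the explicit fat-hypertree formula $m_j^B=\sum_{\text{fat hypertrees}}\prod_r (M_r^B)^{q_r}$, suitably normalized. Concentration comes from Theorem \ref{thm: CLT for matrix}, since the variance of the moments is $O(1/N)$. Uniqueness and unbounded support follow as in that theorem. Uniqueness uses a Carleman bound: $|M^B_r|\le C B^r$, and the number of contributing fat hypertrees grows at most like $C^j j!$-type bounds compatible with Carleman's condition. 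Unboundedness holds because $M_{2r}^B>0$ for all $r$, and the single-hyperedge-of-multiplicity-$r$ trees already force $m_{2j}^B\ge c\,(M^B_{2j})$-type growth, which is not $O(R^{2j})$ uniformly. For odd $k$ the factor $2\theta-1$ appears in $M_k^B$, so the odd moments vanish when $\theta=1/2$; this gives the last claim at every level $B$, and it passes to the limit.

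\textbf{Step 3: item 2, the main obstacle.} The moments $m_j^B$ blow up as $B\to\infty$, so moments cannot be used directly. I would instead show that $(\mu^B)_B$ is Cauchy in a metric for weak convergence, using the rank/Hoffman--Wielandt inequality at finite $N$. For $B<B'$, the matrix $\Tilde A^{B'}-\Tilde A^B$ comes from entries with $Ba_n<|X|\le B'a_n$ (plus a deterministic shift of negligible rank or norm). Its normalized Hilbert--Schmidt norm is bounded by
\[
\frac1N\E\|\Tilde A^{B'}-\Tilde A^B\|_2^2\lesssim n\E[(X/a_n)^2\1_{Ba_n<|X|\le B'a_n}],
\]
which tends to $\int_B^{B'}u^{1-\alpha}\,du$ and does not go to zero. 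So instead I use the rank inequality $d_{KS}(\mu_A,\mu_{A'})\le \mathrm{rank}(A-A')/N$. The expected number of entries exceeding $Ba_n$ is of order $N\cdot nB^{-\alpha}/n\cdot(\text{number of pairs per entry})$. Each such entry perturbs $O(p^2)$ rows of the contraction, so $\mathrm{rank}/N\lesssim B^{-\alpha}$ in probability. The delicate point is that the contraction mixes entries, so one must check that a tensor entry affects only $O(1)$ rows of $\Tilde A$. This holds because it enters only the $p(p-1)$ positions $(i_a,i_b)$, up to normalization. Then $d_{KS}(\mu^B,\mu^{B'})\lesssim B^{-\alpha}$ after passing to the limit in $N$, so $\mu^B$ is Cauchy in Kolmogorov distance and converges weakly to some $\mu_{\alpha,\theta}$. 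Tightness of the limit, i.e.\ that no mass escapes, follows from the uniform Kolmogorov bound.
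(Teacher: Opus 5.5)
Your Steps 1 and 2 reproduce Lemma \ref{lem:levyheavy} and Corollary \ref{cor:convLevy}. Step 3 is the rank-inequality argument of Lemma \ref{lem:tightness}, with two differences: you compare two truncation levels directly and use completeness of the Kolmogorov distance, whereas the paper compares each truncation with the untruncated matrix and gets tightness from $\int x^2\,d\mu^B=1$ plus Prokhorov.

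\textbf{Main gap (Step 3).} The matrix $\Tilde A^B_N$ is divided by $\sqrt{M_2^B(p-2)!}$, where $M_2^B=\frac{\alpha}{2-\alpha}B^{2-\alpha}$ depends on $B$. With $A^B_N=N^{\frac{p-2}{2}}T^B_N.u^{p-2}$, this gives
\[
\Tilde A^{B'}_N-\Tilde A^{B}_N=\frac{A^{B'}_N-A^{B}_N}{\sqrt{M_2^{B'}(p-2)!}}+\Bigl(\frac{1}{\sqrt{M_2^{B'}}}-\frac{1}{\sqrt{M_2^{B}}}\Bigr)\frac{A^{B}_N}{\sqrt{(p-2)!}}.
\]
The last term is a nonzero multiple of $A^B_N$ and has rank of order $N$, so the rank inequality yields nothing.

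\begin{itemize}
\item Your claim that the difference comes only from entries with $Ba_n<|X|\le B'a_n$ holds only for the unnormalized matrices. Your Hilbert--Schmidt estimate, which grows like $\int_B^{B'}u^{1-\alpha}\,du$, is also an unnormalized computation, so it is inconsistent with Step 2.
\item Run on $A^B_N$, your argument is correct. Let $\hat\mu^B$ be the limiting ESD of $A^B_N$, i.e.\ the law of $\sqrt{M_2^B(p-2)!}\,x$ with $x\sim\mu^B$. Then $d_{KS}(\hat\mu^B,\hat\mu^{B'})\lesssim B^{-\alpha}$, so the $\hat\mu^B$ converge and form a tight family.
\item Since $\mu^B$ is $\hat\mu^B$ compressed by the factor $\sqrt{M_2^B(p-2)!}\to\infty$, this forces $\mu^B\Rightarrow\delta_0$. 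The uniform bound $\int x^2\,d\mu^B=1$ does not prevent this.
\end{itemize}
So, with the normalization as stated, item 2 holds only with the degenerate limit $\mu_{\alpha,\theta}=\delta_0$. A nondegenerate limit requires a $B$-independent normalization, as in Ben Arous--Guionnet, with the statement made for $\hat\mu^B$.

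The paper's proof has the same slip. Its remark that a common deterministic scalar factor is irrelevant overlooks that the factor differs between $B$ and $C$. Equivalently, its $\overline\mu_N^B$ converges to $\hat\mu^B$, not to $\mu^B$.

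\textbf{Smaller gap (unbounded support).} For fixed $B$, the normalized parameter
\[
M'_{2j}=(p-2)!^{j}\frac{2-\alpha}{2j-\alpha}\Bigl(\frac{2-\alpha}{\alpha}B^{\alpha}\Bigr)^{j-1}
\]
grows only geometrically in $j$. So your single-hyperedge bound $m^B_{2j}\gtrsim M'_{2j}$ is compatible with compact support. Superexponential growth needs trees with many hyperedges. Lemma \ref{rem:no_compact_support} uses the star with $s$ hyperedges of multiplicity $2k\ge 4$ at the root. Its contribution is at least $\frac{(ks)!}{s!\,(k!)^s((p-1)!)^s}(M'_{2k})^s$, whose $(2ks)$-th root is of order $s^{1/2-1/(2k)}\to\infty$.

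The remaining points are fine. The centering is harmless because $N^{p-1}m_N^2\to0$ for every $\alpha\in(0,2)$, where $m_N$ is the mean of a truncated normalized entry. Carleman's criterion and the vanishing of odd moments at $\theta=1/2$ work as you say.
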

The proof of this Theorem is given in Section \ref{sec:levy}.

\paragraph{Pair adjacency matrix for general heavy-Wigner tensors.}
Actually, both Theorem \ref{thm:Levy} and Theorem \ref{thm: convergence erdos renyi} are corollaries of a broader result that show the convergence of the empirical spectral distribution of the matrix obtained by contracting a heavy-Wigner tensor. 

Let $T$ be a heavy-Wigner tensor of order $p$ and dimension $N$ with parameters $(M_r)_{r\geq 2}$ and set $u=\mathbf{1} / \sqrt{N} \in \mathbb{S}^{N-1}$. We consider 
\begin{equation}\label{eq:matrixcontraction}
    A_N = N^{\frac{p-2}{2}} T.u^{p-2} , \qquad \text{ i.e. } \qquad (A_N)_{a,b} =  \sum_{j_1,\ldots,j_{p-2}} T_{a,b,j_1,\ldots,j_{p-2}} .
\end{equation}
Note that this indeed coincides with the definition of $\Tilde{A}$ for Erd\H{o}s-Rényi hypergraphs. We normalize 
\begin{equation}\label{eq:matrixTA}
    \Tilde{A}_N = \frac{A_N}{\sqrt{M_2 (p-2)!}} ,
\end{equation}
so that the variance of the entries is asymptotically $1/N$.
In analogy with the moment formula for the limiting spectral measure of heavy Wigner matrices \cite{zakharevich, guionnet_abel}, we introduce the following quantities. Let $(a_r)_{r\geq 2}$ be a sequence of parameters. For every $k\geq 1$, set
\begin{equation}\label{eq:moments de Wigner lourd}
    \eta_k^{(a_r)} := \sum_{(H,o) \in \mathcal{T}_k^{\bullet}} \frac{1}{|\mathrm{Aut}(H,o)|} \sum_{P \in P_k(H)} \prod_{e \in E(H)} a_{m(P,e)} .
\end{equation} 
where $\mathcal{T}_k^{\bullet}$ is the set of rooted $p$-uniform fat hypertrees with $k$ hyperedges counted with multiplicity. Moreover, $P_k(H)$ is the set of closed paths of length $k$ on the clique expansion of $H$ starting from the root such that for each hyperedge $e\in E(H)$, the path uses an edge from the hyperedge $e$, $m(P,e)$ times.

\begin{theorem}\label{thm:measureHeavyWigner}
    Let $T$ be a heavy-Wigner tensor with parameters $(M_k)_{k\geq 2}$. Then the empirical spectral distribution of $\widetilde A_N$ converges weakly in probability to the probability measure $\nu$ whose moments are $(\eta_k^{(M'_r)})_{k\geq 0}$, where
\[
    M'_r:=\frac{(p-2)!^{r/2}M_r}{M_2^{r/2}},\qquad r\geq 2.
\] 
In particular, $(\eta_k^{(M'_r)})_k$ is a sequence of moments of a probability measure, with $\eta_2^{(M'_r)}=1$.
\end{theorem}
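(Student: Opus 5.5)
We use the moment method. Writing $\mathrm{tr}$ for the normalized trace of an $N\times N$ matrix, the goal is to show that for every $k\geq 1$
\[
\E\left[\mathrm{tr}(\widetilde A_N^k)\right]\to \eta_k^{(M'_r)} \quad\text{and}\quad \mathrm{Var}\left(\mathrm{tr}(\widetilde A_N^k)\right)\to 0 .
\]
We then check that $(\eta_k^{(M'_r)})_k$ determines a unique probability measure; this gives weak convergence in probability.

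\emph{Step 1: reduction to Theorem~\ref{thm: trace injective heavy wigner}.} Expand
\[
\mathrm{tr}(A_N^k)=\frac1N\sum_{a_1,\dots,a_k}\ \sum_{j^{(1)},\dots,j^{(k)}}\ \prod_{t=1}^k T_{a_t,a_{t+1},j^{(t)}_1,\dots,j^{(t)}_{p-2}},
\]
with indices taken mod $k$. This is exactly $\mathrm{tr}_{\hh_k}(T)$, where $\hh_k$ is the connected $p$-uniform test hypergraph with one hyperedge $e_t=\{v_t,v_{t+1},w^t_1,\dots,w^t_{p-2}\}$ for each $t$: the $v_t$ form a cycle, and each hyperedge carries $p-2$ private leaves $w^t$. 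Decomposing over partitions $\pi$ of $V(\hh_k)$ gives $\mathrm{tr}_{\hh_k}=\sum_\pi \mathrm{tr}^0_{\hh_k^\pi}$, a finite sum (for fixed $k$) whose terms converge by Theorem~\ref{thm: trace injective heavy wigner}. One point needs care. The diagonal terms $a=b$, and entries whose indices are not all distinct, violate the limit hypothesis on the $M_r$ but still satisfy the uniform $O(N^{-(p-1)})$ moment bound. The proof of Theorem~\ref{thm: trace injective heavy wigner} then shows that they contribute only to $\pi$ for which $\hh_k^\pi$ has a hyperedge with a repeated vertex; such a hyperedge is never part of a fat hypertree, so these terms vanish in the limit. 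Hence
\[
\lim_N \E\,\mathrm{tr}(\widetilde A_N^k)=\big(M_2(p-2)!\big)^{-k/2}\sum_{\pi:\ \hh_k^\pi \text{ fat hypertree}}\ \prod_r M_r^{q_r(\hh_k^\pi)} .
\]

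\emph{Step 2: combinatorial identification.} This is the main obstacle. Fix a rooted fat hypertree $(H,o)$ with $k$ hyperedges counted with multiplicity. We must show that the number of partitions $\pi$ with $\hh_k^\pi\simeq H$, weighted appropriately, equals
\[
\frac{1}{|\mathrm{Aut}(H,o)|}\,\#P_k(H)\cdot\big((p-2)!\big)^{\#\{\text{distinct hyperedges}\}}
\]
(weighted by $\prod_e M_{m(P,e)}$). The bijection is as follows. The images of $v_1,\dots,v_k$ trace a closed walk $P$ on the clique expansion $G_H$, rooted at the image of $v_1$. Each step $t$ uses the clique edge $\{v_t,v_{t+1}\}$ inside the hyperedge that is the image of $e_t$, and $m(P,e)$ is precisely the multiplicity of $e$. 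Conversely, given $P$, all the $\pi$ inducing it are obtained by deciding how the $p-2$ leaves $w^t$ fill the remaining $p-2$ vertices of their hyperedge. When a hyperedge occurs with multiplicity $m$, the first occurrence fixes the identification up to the $(p-2)!$ orderings, and the later occurrences are then forced, since the hypertree condition forbids new vertices. This produces the factor $(p-2)!$ per distinct hyperedge. Counting labelled-versus-unlabelled structures produces the $1/|\mathrm{Aut}(H,o)|$. Each distinct hyperedge $e$ carries $M_{m(P,e)}$, and combining $(p-2)!^{1}$ with the normalization $(M_2(p-2)!)^{-m/2}$ gives exactly $M'_{m}$, because $\sum_e m(P,e)=k$. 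This yields the limit $\eta_k^{(M'_r)}$. We still need to check that closed walks that revisit a hyperedge through a different pair do not double count. The point is that $\pi$ is determined by $P$ together with the leaf assignment, and the assignment is fixed at the first visit of each hyperedge. For $k=2$, only the fat hypertree with a single double hyperedge contributes, which gives $\eta_2=1$.

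\emph{Step 3: concentration and uniqueness.} Write $\mathrm{Var}\,\mathrm{tr}(A^k)=N^{-2}\sum \mathrm{Cov}$ over pairs of walks sharing a hyperedge. These pairs glue into a connected hypergraph $\hh_k\sqcup\hh_k/\!\sim$ with one component. By Theorem~\ref{thm: trace injective heavy wigner} (or Theorem~\ref{thm: CLT for tensors}), the sum is $O(N)$ before the normalization $N^{-2}$, so the variance is $O(1/N)$, and Chebyshev's inequality gives convergence in probability of each moment. For uniqueness, bound $\eta_k\le C^k k^{k}$ using $|M_r|\le C^r$ (if the $M_r$ grow faster, only the Carleman condition on the specific sequence is needed) and the number of rooted walks on fat hypertrees. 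The Carleman criterion then shows that the limit is determined by its moments. It is a positive measure because it is a limit of moments of probability measures.
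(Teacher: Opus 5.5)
\textbf{Step 2 contains a counting error for $p\geq 4$.} Your overall route is the paper's: write $\mathrm{tr}(A_N^k)=\mathrm{tr}_{\hh_k}(T)$, decompose over partitions, keep only fat hypertrees via Theorem~\ref{thm: trace injective heavy wigner}, identify the combinatorics with $\eta_k$, then bound the variance and apply Carleman. The failing claim is that ``the first occurrence fixes the identification up to the $(p-2)!$ orderings, and the later occurrences are then forced.''

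Suppose step $t$ of the walk uses the hyperedge $e$ of $H$ through the clique edge $\{x,y\}$. The hypertree condition forces the leaves $w^t_1,\dots,w^t_{p-2}$ to land on the set $e\setminus\{x,y\}$. It does not force \emph{which} bijection onto that set is used. The leaves of different occurrences are distinct vertices of $\hh_k$, so different bijections give different partitions $\pi$, and each of these contributes separately to the sum. Equivalently, by symmetry of $T$, at every step $t$ all $(p-2)!$ orderings of $(j^{(t)}_1,\dots,j^{(t)}_{p-2})$ give the same entry.

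So the correct factor is $(p-2)!$ per step, i.e.\ $(p-2)!^k$ per walk. This is exactly the paper's count of pairs $(\pi,\varphi)$ with $\varphi(\gamma(\hh_k^\pi))=\gamma$, which it then divides by $|\mathrm{Aut}(H,o)|$. Per distinct hyperedge $e$ the factor is $(p-2)!^{m(P,e)}$, not $(p-2)!$.

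A concrete check with $p=4$, $k=2$:
\begin{itemize}
\item The partitions of $\hh_2$ producing a double hyperedge keep $v_1,v_2$ as singletons and pair the leaves either as $\{w^1_1,w^2_1\},\{w^1_2,w^2_2\}$ or as $\{w^1_1,w^2_2\},\{w^1_2,w^2_1\}$.
\item This gives $\E\,\mathrm{tr}(A^2)\to 2M_2=(p-2)!\,M_2$.
\item Your formula instead gives $\frac{1}{3!}\cdot 3\cdot 2\cdot M_2=M_2$.
\end{itemize}

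\textbf{An arithmetic slip hides the error.} With your count you get
\[
(p-2)!\cdot M_m\,(M_2(p-2)!)^{-m/2}=(p-2)!^{1-m/2}M_m/M_2^{m/2},
\]
which is not $M'_m=(p-2)!^{m/2}M_m/M_2^{m/2}$. Your count would then give a limiting second moment of $1/(p-2)!$, contradicting the $\eta_2=1$ you assert. With the correct factor $(p-2)!^m$ per hyperedge of multiplicity $m$, you get $(p-2)!^m M_m (M_2(p-2)!)^{-m/2}=M'_m$ exactly, and the rest of your argument goes through.

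\textbf{A smaller point on Step 3.} Theorem~\ref{thm: trace injective heavy wigner} only controls the $\E[X_\phi X_\psi]$ part of the covariance. The $\E X_\phi\,\E X_\psi$ part over glued pairs needs its own bound: it is at most
\[
N^{|V^\sigma|-(p-1)(|\overline E_1|+|\overline E_2|)}\le N^{|V^\sigma|-(p-1)|\overline E_\sigma|}\le N,
\]
as in the $\eta_1\le 0$ step of the paper's CLT proof. Doing this directly also avoids the i.i.d.\ assumption that the paper's variance argument implicitly borrows from Theorem~\ref{thm: CLT for tensors}.
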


There are essentially two different behaviors of the limiting spectral measure given in the two following lemmas.

\begin{lemma}\label{lem:semicircle}
    If $M_2=1$ and $M_{2r}=0$ for $r > 1$, then $\nu$ is the semi-circular supported on $[-2,2]$. 
\end{lemma}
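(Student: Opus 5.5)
The plan is to reduce the statement to a model whose limiting spectral law is already known, using the fact that, by Theorem \ref{thm:measureHeavyWigner}, the limit $\nu$ depends on $T$ only through $(M_r)_{r\geq 2}$. I will first show that all odd parameters vanish as well. For $r\geq 1$, Cauchy--Schwarz gives
\[
N^{p-1}\bigl|\E[T_{\mathbf i}^{2r+1}]\bigr| \le \bigl(N^{p-1}\E[T_{\mathbf i}^{2}]\bigr)^{1/2}\bigl(N^{p-1}\E[T_{\mathbf i}^{4r}]\bigr)^{1/2}\longrightarrow \sqrt{M_2 M_{4r}}=0 .
\]
Hence $M_r=0$ for every $r\geq 3$. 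This gives $M'_2=(p-2)!$ and $M'_r=0$ for $r\geq 3$.

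Next I take a comparison tensor, namely the GOTE normalized as $X/N^{(p-1)/2}$. For distinct indices the variance is $1/N^{p-1}$, so its parameter is $M_2=1$. Its higher moments satisfy $N^{p-1}\E[T^{r}]=O(N^{-(p-1)(r/2-1)})\to 0$ for $r\geq 3$, and the uniform bound $\sup N^{p-1}\E[T^k]=O(1)$ also holds on the diagonal entries, where the variance is $|\mathrm{Stab}|$, which is bounded. So it is a heavy-Wigner tensor with the same parameters $(M_r)$ as in the lemma. By \cite{goulart_couillet_comon} (or \cite{au_garza_vargas}), the ESD of its contraction, normalized by $\sqrt{(p-2)!}$ as in \eqref{eq:matrixTA}, converges to $\mu_{\mathrm{sc}}$. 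Theorem \ref{thm:measureHeavyWigner} then identifies $\nu$ with $\mu_{\mathrm{sc}}$, since both limits have moments $\eta_k^{(M'_r)}$. The one point to check carefully is that the normalization in those references matches \eqref{eq:matrixcontraction}--\eqref{eq:matrixTA}. This is guaranteed by $\eta_2=1$, since both laws then have variance $1$.

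As a self-contained backup, I would verify $\eta_{2m}^{(M'_r)}=\mathrm{Cat}_m$ and $\eta_{2m+1}=0$ directly. Only paths with $m(P,e)=2$ for every distinct hyperedge contribute. Such a $P$ lives on a $2$-fold hypertree with $m$ distinct hyperedges, so the odd moments vanish.

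The key combinatorial step is that the two uses of a hyperedge $e$ must be the same graph edge traversed back and forth. Removing $e$ from the hypertree disconnects it into $p$ branches, one at each vertex of $e$, and a closed walk must balance its crossings between these branches. With only two steps inside $e$, the second step must undo the first. Consequently the set of used graph edges forms a plane tree $\tau$ with $m$ edges rooted at $o$, traversed in depth-first order, and $H$ is obtained from $\tau$ by adding $p-2$ pendant vertices to each edge.

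It remains to check that the weight $\frac{1}{|\mathrm{Aut}(H,o)|}\,(p-2)!^{m}$ exactly compensates the multiplicity. Each contributing pair $(H,P)$ should correspond to exactly one plane tree with $m$ edges. I would verify this by passing to labeled structures: automorphisms permute the $p-2$ pendant vertices of each hyperedge, which accounts for the factor $(p-2)!^{m}$, and the orbit-counting then reduces the sum to the number of plane trees with $m$ edges, namely $\mathrm{Cat}_m$. This bookkeeping of automorphisms is the main obstacle on this route, which is why I would lead with the comparison argument above.
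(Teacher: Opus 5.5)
Your proof is correct, but your main route differs from the paper's. The paper stays inside the moment formula. Starting from \eqref{eq:traceA}, with $M_r=0$ for $r\geq 3$ obtained by the same Cauchy--Schwarz step as yours, only partitions $\pi$ making $\hh_{2m}^{\pi}$ a double hypertree survive. This forces the image of the cycle vertices to be a rooted plane double tree, of which there are Catalan many. It also forces the $p-2$ remaining vertices of each pair of glued hyperedges to be matched, in $(p-2)!$ ways. The resulting factor $(p-2)!^{m}M_2^{m}$ is exactly cancelled by the normalization $\sqrt{M_2(p-2)!}$.

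You instead use that $\nu$ depends on $T$ only through $(M_r)$, take the GOTE as a representative, import the semicircle law for its contraction from \cite{goulart_couillet_comon,au_garza_vargas}, and fix the scale with $\eta_2^{(M'_r)}=1$. Logically this is uniqueness of weak limits in probability, and it gives $\eta_{2m}^{(M'_r)}=\mathrm{Cat}_m$ only a posteriori. This is shorter and makes the universality transparent, but it is not self-contained. It needs the weak-convergence part of Theorem \ref{thm:measureHeavyWigner}, hence the variance bound of Proposition \ref{prop:variance} and not just Proposition \ref{prop:moments}, unless you match expected moments directly. It also needs the cited results to cover the specific vector $u=\mathbf 1/\sqrt N$ with the same symmetric-tensor conventions.

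Your backup is essentially the paper's proof moved to the $\eta$-side. Your orbit--stabilizer step, where the stabilizer of a walk is the $(p-2)!^{m}$ pendant permutations, is the dual of the paper's count of $(p-2)!$ matchings per pair. The paper's partition-level count avoids automorphism bookkeeping altogether. On the other hand, your branch-balancing argument supplies the justification, left implicit in the paper, that each doubled hyperedge is crossed along one graph edge back and forth.
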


\begin{remark}
    Note that we normalize the contracted tensor by $\sqrt{(p-2)!}$ and we find the standard semi-circle law on $[-2,2]$. It is consistent with the work of Couillet, Comon, Goulart \cite{goulart_couillet_comon}, and Au, Garza-Vargas \cite{au_garza_vargas} where the Wigner tensor, which is a particular case of heavy-Wigner with $M_2=1$ and $M_k=0$ for $k\geq 3$, is normalized by $\sqrt{p!}$ and the limiting law for the contracted tensor is the semi-circular dilated by $1/\sqrt{p(p-1)}$.
\end{remark}

\begin{lemma}\label{rem:no_compact_support}
    If $M_{2r} > 0$ for some $r >1$, then $\nu$ is not compactly supported.
\end{lemma}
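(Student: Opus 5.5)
To prove Lemma \ref{rem:no_compact_support}, I will lower-bound the moments of $\nu$ given by Theorem \ref{thm:measureHeavyWigner}. Recall that a probability measure supported in $[-R,R]$ satisfies $\eta_{2k}\le R^{2k}$, so $\eta_{2k}^{1/(2k)}\le R$ for all $k$. It is therefore enough to show that $\eta_{2k}^{(M'_r)}$ grows faster than $C^{2k}$ for every constant $C$. A natural target is super-exponential growth along a subsequence, for instance $\eta_{2rm}\ge c^m\, m!$ for some $c>0$.

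First, every summand in \eqref{eq:moments de Wigner lourd} is nonnegative once we restrict to suitable terms. The general terms may have signs, since odd $M_r$ can be negative. I would handle this by choosing a family of fat hypertrees and paths in which every multiplicity $m(P,e)$ equals $2r$, giving weight $(M'_{2r})^{q}>0$. I then need positivity of the whole sum, but that follows from the fact that it is a sum of moments.

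This points to a cleaner route: write $\eta_{2k}=\int x^{2k}\,d\nu$. By the moment method one has $\eta_{2k}=\lim_N \E[\mathrm{tr}\,\widetilde A_N^{2k}]$. The contributions to this limit are exactly the fat-hypertree terms of Theorem \ref{thm: trace injective heavy wigner}, and the terms come from walks. Those walks in which every edge appears an even number of times, with all multiplicities equal to $2$ or $2r$, contribute nonnegatively. The drawback is that walks with odd multiplicities also appear and could in principle be negative.

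To avoid that issue, I would fall back on a support argument via the Stieltjes transform. The more robust plan is direct: exhibit explicitly that, for $k=rm$, the star configuration contributes at least $(M'_{2r})^m\cdot\#\{\text{walks}\}/|\mathrm{Aut}|$. The star configuration consists of $m$ distinct hyperedges sharing the root, each traversed $2r$ times back and forth along one fixed pair (root, leaf). The walk count is the multinomial $\frac{(2rm)!}{((2r)!)^m}$ over orderings, divided by $m!$ for automorphisms. This grows like $(rm)^{(2r-1)m}$, which is super-exponential in $2rm$ because $r>1$.

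The main obstacle is controlling all the other terms, which may be negative. To handle them, I would use a coupling trick: $\nu$ depends only on $(M_r)$, so I may realize it with a specific model having the same parameters. Concretely, take i.i.d. symmetric-plus-something entries. Alternatively, I observe that $\eta_{2k}$ is the limit of $\E\,\mathrm{tr}\widetilde A_N^{2k}\ge0$, and I decompose $\widetilde A_N$ as a sum of two independent pieces: the contribution of hyperedges with even-moment part, and the rest.

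Failing that, I would argue at the level of the limiting operator on the clique expansion of a weighted Galton–Watson hypertree (Section \ref{sec: limit operator contraction}). There, $\nu$ is the expected spectral measure at the root. The event that the root has degree $d$ with a weight of size bounded below has positive probability for every $d$, because $M_{2r}>0$ with $r>1$ forces a non-degenerate weight distribution beyond the Gaussian regime. On this event, the local star has norm of order $\sqrt d$, which forces $\nu$ to charge $[\sqrt d/2,\infty)$ up to sign. Since $d$ is arbitrary, $\nu$ is not compactly supported.
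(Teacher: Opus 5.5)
Your central construction is the paper's: a rooted star of $m$ hyperedges, each of multiplicity $2r$, traversed by back-and-forth excursions from the root, followed by Stirling.

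Your count of these walks is wrong, however. Such a walk is determined by the order of its $rm$ excursions. So for a fixed choice of leaves there are $(rm)!/(r!)^m$ of them, not $(2rm)!/((2r)!)^m$; most arrangements of $2rm$ labelled steps do not split into consecutive excursions. A quick check: with your count, the case $r=1$ would already give super-exponential growth, contradicting Lemma~\ref{lem:semicircle}. This is repairable. With the correct count and $|\mathrm{Aut}|=m!\,((p-1)!)^m$ you still get $\eta_{2rm}^{1/(2rm)}\to\infty$, since $(rm)!/m!\ge (m!)^{r-1}$ and $r\ge 2$.

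The substantive issue is the one you raise yourself. The star term bounds $\eta_{2rm}$ from below only if all other terms are nonnegative. For $p\ge 3$ this can fail, because triangles inside a hyperedge produce odd multiplicities. For example, the two-hyperedge star with multiplicities $3$ and $5$ contributes a positive multiple of $M'_3M'_5$ to $\eta_8$. This is negative for $M_k=\E Y^k$ with $\P(Y=1)=0.99$ and $\P(Y=-3)=0.01$, which is realized by centered entries $\xi Y$ with $\xi$ Bernoulli of parameter $N^{-(p-1)}$. The paper's proof asserts the lower bound without addressing this point. That is harmless in the paper's applications: all $M_k>0$ for Erd\H{o}s--R\'enyi, and for L\'evy tensors all odd $M_k$ have the sign of $2\theta-1$, so every term of an even moment is nonnegative. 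It is not harmless for a general $\nu$.

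None of your remedies closes this gap. Nonnegativity of $\eta_{2k}$ as a whole says nothing about a sub-sum. The ``coupling'' and the split into an ``even-moment part'' and ``the rest'' are never defined. The limiting operator of Section~\ref{sec: limit operator contraction} is constructed only for Erd\H{o}s--R\'enyi tensors. Finally, a large star at the root does not by itself force the root spectral measure to charge $[\sqrt d/2,\infty)$ when weights are signed and the clique expansion contains triangles. As written, the proposal ends as a list of unexecuted options.

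Your ``independent pieces'' idea can be made rigorous by symmetrization. Let $T'$ be an independent copy of $T$ and set $S=T-T'$. Then $S$ is heavy-Wigner with $M^S_{2j}=2M_{2j}$ and vanishing odd parameters, since the cross terms in $\E(T-T')^{2j}$ either vanish by centering or are $O(N^{-2(p-1)})$. In the moment formula for $S$, every surviving term therefore has only even multiplicities and is nonnegative, so the star bound is legitimate for $S$. Since $X\mapsto \mathrm{tr}\,X^{2k}$ is convex on real symmetric matrices, $\E\,\mathrm{tr}(A-A')^{2k}\le 2^{2k}\,\E\,\mathrm{tr}\,A^{2k}$. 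Applying Proposition~\ref{prop:moments} to $T$ and to $S$ gives $\eta_{2k}^{(M'_r)}\ge 2^{-k}\eta_{2k}^{(L_r)}$, where $L_r:=(p-2)!^{r/2}M^S_r/(M^S_2)^{r/2}$ and $L_{2r}>0$. This transfers the super-exponential growth to $\nu$.
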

Theorem \ref{thm:measureHeavyWigner} is proved in Section \ref{sec:conv_matrix} and the two Lemmas \ref{lem:semicircle} and \ref{rem:no_compact_support} are proved in Section \ref{sec:limiting_law}.
Furthermore, we deduce from Theorem \ref{thm: CLT for tensors} a central limit theorem for the spectral moments of $\Tilde{A}_N$ whenever the heavy-Wigner tensor it comes from have i.i.d. entries, proved in Section \ref{sec:CLT_matrix}.
\begin{theorem}\label{thm: CLT for matrix}
    Let $T$ be a heavy-Wigner tensor with i.i.d. entries and parameters $(M_k)_{k\geq 2}$ and $\Tilde{A}$ defined in Equation \eqref{eq:matrixTA}. The process
    $$(Z_N(k))_{k\geq 1}:=\left(\frac{1}{\sqrt{N}}(\mathrm{Tr}(\Tilde{A}^k)-\E\mathrm{Tr}(\Tilde{A}^k))\right)_{k\geq 1},$$
    converges toward a centered Gaussian process $(z(k))_{k\geq 1}$ of covariance given by
    $$\E(z(k_1)z(k_2))=\frac{1}{((p-2)!M_2)^{(k_1+k_2)/2}}\sum_{\pi_1,\pi_2}\E\left(z_{\hh_{k_1}^{\pi_1}}(T)z_{\hh_{k_2}^{\pi_2}}(T)\right),$$
    where $z_{\hh}(T)$ is the process of Theorem \ref{thm: CLT for tensors} and the sum over $\pi_1$ (resp. $\hh_2$) ranges over partitions of the vertices of $\hh_{k_1}$ (resp. $\hh_{k_2}$) which is the hypergraph with $k_1$ (resp. $k_2$) hyperedges forming a cycle.
\end{theorem}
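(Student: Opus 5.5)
The plan is to express the moments of $\Tilde A_N$ as finite linear combinations of injective trace invariants of $T$, and then read off the Gaussian limit from Theorem \ref{thm: CLT for tensors} by linearity.

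First I expand $\mathrm{Tr}(\Tilde A^k)$. By \eqref{eq:matrixcontraction},
\[
\mathrm{Tr}(A_N^k)=\sum_{a_1,\ldots,a_k}\prod_{\ell=1}^k\sum_{j^{(\ell)}_1,\ldots,j^{(\ell)}_{p-2}} T_{a_\ell,a_{\ell+1},j^{(\ell)}_1,\ldots,j^{(\ell)}_{p-2}},
\]
with $a_{k+1}=a_1$. This is exactly $\mathrm{Tr}_{\hh_k}(T)$, where $\hh_k$ is the $p$-uniform hypergraph with $k$ hyperedges arranged in a cycle: consecutive hyperedges share one cycle vertex $a_\ell$, and each hyperedge also carries $p-2$ private vertices. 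Decomposing over partitions of the vertex set gives
\[
\mathrm{Tr}(\Tilde A^k)=((p-2)!M_2)^{-k/2}\sum_{\pi\in\mathcal P(V(\hh_k))}\mathrm{Tr}^0_{\hh_k^\pi}(T).
\]
This is a finite sum whose number of terms depends only on $k$ and $p$, not on $N$. Centering and dividing by $\sqrt N$ then gives
\[
Z_N(k)=((p-2)!M_2)^{-k/2}\sum_{\pi}Z_N(\hh_k^\pi),
\]
where $Z_N(\hh)$ is the process of Theorem \ref{thm: CLT for tensors}.

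Next I invoke Theorem \ref{thm: CLT for tensors}. It gives joint convergence in finite-dimensional distributions of $(Z_N(\hh))_\hh$ to the centered Gaussian process $(z_\hh(T))_\hh$. For any finite family $k_1,\ldots,k_m$, the vector $(Z_N(k_i))_i$ is a fixed linear image of finitely many coordinates $Z_N(\hh_{k_i}^{\pi})$. By the continuous mapping theorem it converges to the corresponding linear image of a Gaussian vector, which is again centered Gaussian. Its covariance is obtained by bilinearity: the factor $((p-2)!M_2)^{-(k_1+k_2)/2}$ times $\sum_{\pi_1,\pi_2}\E(z_{\hh_{k_1}^{\pi_1}}z_{\hh_{k_2}^{\pi_2}})$, which is the stated formula. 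Convergence of the process $(Z_N(k))_{k\ge1}$ means convergence of its finite-dimensional marginals, so this settles the claim.

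The only point needing care is that the index set of Theorem \ref{thm: CLT for tensors} covers every hypergraph $\hh_k^\pi$ that appears. These quotients may contain repeated vertices inside one hyperedge, so some hyperedges become multisets. They may also become disconnected or have multiple hyperedges. If Theorem \ref{thm: CLT for tensors} is read as covering all $p$-uniform test hypergraphs in the paper's general sense (multisets allowed), this is immediate. The entries on non-distinct index tuples are still independent and heavy-Wigner by the standing assumptions. I expect this bookkeeping check, rather than any analytic estimate, to be the main obstacle. If the tensor CLT were stated only for connected $\hh$, I would rely on the fact that every quotient $\hh_k^\pi$ of the cycle $\hh_k$ is still connected, so nothing further is needed.
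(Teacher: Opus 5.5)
Your proposal is correct and follows essentially the paper's proof. Both arguments write $\mathrm{Tr}(\Tilde A^k)=((p-2)!M_2)^{-k/2}\sum_{\pi}\mathrm{Tr}^0_{\hh_k^\pi}(T)$, so that $Z_N(k)$ is a fixed finite linear combination of the $Z_N(\hh_k^\pi)$, and then transfer the Gaussian limit of Theorem \ref{thm: CLT for tensors}. The only difference is cosmetic: the paper expands the joint moments $\E\prod_i Z_N(k_i)$ multilinearly and matches them with the Wick formula in terms of $M^{(2)}$, whereas you pass through convergence in distribution and the continuous mapping theorem for linear maps, which is equally valid.
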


\section{Convergence of invariant polynomials}

We introduce in this Section the objects we study. We then prove Theorem \ref{thm: formes lineaires invariantes}, \ref{thm: trace injective heavy wigner} and \ref{thm: CLT for tensors}.

\subsection{Hypergraphs}\label{sec:hypergraphs}

\paragraph{Definitions.}
\begin{itemize}
    \item For $V$ a set, a {\em hyperedge} of $V$ is a multiset of elements of $V$. A directed hyperedge is a tuple of elements of $V$.

    \item A {\em simple hypergraph} is the data of $H=(V,E)$ where $V$ is a finite set of vertices, and $E$ is a finite set of hyperedges which contains pairwise distinct vertices.
    
    \item A {\em hypergraph} is the data of $H=(V,E)$, where $V$ is a finite set of vertices and $E$ is a multiset of hyperedges, without restriction on the vertices they contain. In other words, we allow multiple hyperedges and loops.
    
    \item The {\em order} (or arity) of a hyperedge is the number of vertices, counted with multiplicity, in this hyperedge, which can be written $$\vert e \vert := \sum_{v\in V}l_v(e).$$ 
    
    \item Moreover we call the {\em degree} of a vertex $v$ the number of hyperedges it belongs to, that is $$\mathrm{deg}(v):= \sum_{e\in E} l_v(e).$$
    
    \item The hypergraph is $p${\em -uniform} if all hyperedges have order $p$, that is $\vert e \vert=p$ for all $e \in E$. A $2$-uniform hypergraph is a graph. The hypergraph is $d${\em -regular} if $\mathrm{deg}(v)=d$ for all $v \in V$. 
    
    
    
    \item A hypergraph is {\em rooted} if one vertex is distinguished. 
    
    \item We define two natural graphs canonically associated to hypergraphs. First, given a simple hypergraph $H=(V,E)$ we denote $G_H=(V,E_H)$ its corresponding clique expansion. It is a simple graph with set of edges $E_H=\{ \{u,v\} : \exists e \in E, u \in e, v \in e, u \neq v \}$.
    
    \item Second, given a hypergraph $H=(V,E)$ we denote $\Tilde{H}=(V_{1}\sqcup V_{2},\Tilde{E})$ its corresponding bipartite graph defined as follows. Vertices of type 1 are vertices of $H$ i.e. $V_{1}:=V$, vertices of type 2 are hyperedges of $H$, i.e. $V_{2}:=E$. Each couple $(v,e)\in V\times E$ with $v\in e$ induces $l_v(e)$ edges between $v\in V_1$ and $e\in V_2$. One can easily check that this is indeed a bipartite graph and that we can recover the original hypergraph from the data of the associated bipartite graph.
    
    \item A hypergraph $H$ is {\em connected} if $\Tilde{H}$ is connected.
    
    \item A hypergraph $H$ is called a {\em hypertree} if it is connected and $\Tilde{H}$ has no cycle (in the usual sense of cycle in a graph). Note that $\Tilde{H}$ has a cycle of length $2$ if and only if $H$ has a hyperedge that contains a vertex with multiplicity at least $2$. 
    
    \item A hypergraph $H$ is called a {\em hyperforest} if each of its connected components are hypertrees.
    
    \item A hypergraph $H$ is a \emph{fat hypertree} if the graph obtained from $H$ after forgetting the multiplicity of its edges is a hypertree. Furthermore, it is a {\em $k$-divisible hypertree} if it is a fat hypertree and if each edge is of multiplicity a multiple of $k$ and it is a {\em $k$-fold hypertree} if each edge is of multiplicity exactly $k$. A $2$-fold hypertree is also called a {\em double hypertree}.

\end{itemize}

\begin{remark}
    Similarly to the case $p=2$, we may consider \emph{fully directed hypergraphs} where each hyperedge comes with an ordering of its adjacent vertices. A fully directed hypergraph is then the data of $H=(V,E)$ where $V$ is a finite set of vertices, and $E$ is a finite multiset of tuples of elements of $V$ ({\em oriented hyperedges}). When not specified, it will always be undirected hypergraphs.
\end{remark}

\begin{lemma}[Euler formula]\label{lem: euler}
    Let $H=(V,E)$ be a simple $p$-uniform hypergraph with $c$ connected components, we have
    \begin{equation}
        \vert V \vert - (p-1) \vert E \vert - c \leq 0, 
    \end{equation}
    with equality iff $H$ is a hyperforest with $c$ connected components.
\end{lemma}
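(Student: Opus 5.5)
The plan is to reduce the statement to the classical Euler inequality for graphs, applied to the bipartite incidence graph $\Tilde{H}=(V_1\sqcup V_2,\Tilde{E})$ of Section \ref{sec:hypergraphs}. Since $H$ is simple and $p$-uniform, every hyperedge consists of $p$ distinct vertices, each with multiplicity one. Hence $\Tilde{H}$ is a simple bipartite graph with $|V_1|+|V_2| = |V|+|E|$ vertices and exactly $p|E|$ edges. By the definition of connectedness for hypergraphs, the connected components of $\Tilde{H}$ correspond exactly to those of $H$, so $\Tilde{H}$ also has $c$ components.

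Next I invoke the standard fact that a graph with $n$ vertices, $m$ edges and $c$ components satisfies $m \geq n - c$, that is, $n-m-c\leq 0$. Equality holds if and only if the graph is a forest. This follows by taking a spanning forest, which has exactly $n-c$ edges; every additional edge closes a cycle. Applied to $\Tilde{H}$, this gives
\[
(|V|+|E|) - p|E| - c \leq 0, \qquad\text{i.e.}\qquad |V| - (p-1)|E| - c \leq 0 .
\]

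For the equality case, equality holds if and only if $\Tilde{H}$ is a forest, i.e. each of its connected components is acyclic. By definition, this means each connected component of $H$ is a hypertree, so $H$ is a hyperforest with $c$ components. Conversely, if $H$ is a hyperforest, each component of $\Tilde{H}$ is a tree and the edge count is exactly $n-c$.

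The only point needing care is the identification of the components of $H$ with those of $\Tilde{H}$, together with the count of edges $p|E|$. The count uses simplicity, so that no hyperedge produces multiple edges or 2-cycles in $\Tilde{H}$. One degenerate case must also be checked: isolated vertices of $H$ are components of both $H$ and $\Tilde{H}$ and are trivially trees, so they do not affect the argument.
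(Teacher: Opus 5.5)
Your proof is correct and follows the paper's argument: apply the graph Euler inequality to the incidence bipartite graph $\Tilde{H}$. That graph has $|V|+|E|$ vertices and $p|E|$ edges, giving $|V|-(p-1)|E|-c\leq 0$ with equality iff $\Tilde{H}$ is a forest, i.e.\ iff $H$ is a hyperforest; your spanning-forest justification and the matching of components of $H$ and $\Tilde{H}$ just spell out what the paper leaves implicit.
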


\begin{proof}
    Using Euler's formula to the associated bipartite graph $\Tilde{H}=(\Tilde{V},\Tilde{E})$ of a given simple, $p$-uniform hypergraph $H=(V,E)$, with $c$ connected components, we obtain : 
\begin{align*}
    0 \geq \vert\Tilde{V} \vert - \vert \Tilde{E} \vert -c &= \vert V \vert + \vert E \vert - p \vert E \vert -c \\
    &= \vert V \vert - (p-1) \vert E \vert - c
\end{align*}
with equality if and only if $\Tilde{H}$ is a forest, $i.e.$ $H$ is a hyperforest.
\end{proof}

\paragraph{Adjacency tensor of a hypergraph.} 
Generalizing the $p=2$ case, where it is usual to define the adjacency matrix of a given graph, we are going to naturally define the adjacency tensor of a hypergraph and inquire about their relations. From now on, unless specified otherwise, all hypergraphs will be undirected and $p$-uniform and $p$ will always denote the order of an edge in a hypergraph.

\begin{definition}
    Let $H=(V,E)$ be a simple $p$-uniform hypergraph with vertex set $V=\{1,\ldots,N\}$. Its adjacency tensor $T_H \in \mathrm{Sym}_p(N)$ is defined by
    $$ (T_H)_{i_1,\ldots,i_p} = \frac{1}{(p-1)!} \mathbb{1}_{ \{i_1,\ldots,i_p \} \in E }.$$
\end{definition}

Remark that the choice of normalizing the tensor by $\frac{1}{(p-1)!}$ is mostly aesthetic to avoid having this term in most of the results. Note that with this choice,
$$ \mathrm{deg}(v) = \sum_{i_2,\ldots,i_p} (T_H)_{v,i_2,\ldots,i_p} .$$


\subsection{Trace invariants and test-hypergraphs}\label{sec:trace_inv}

The notion of test-graphs comes from traffic probability (see \cite{male_traffic}) and is a well-suited framework for studying the limiting distribution of permutation invariant random matrices. Indeed, the family of linear forms associated to trace (or injective trace) of test-graphs is a complete family of permutation invariant linear forms. 
This family is thus also a generating family of $O_N$ -invariant linear forms. Indeed, if $M$ is a real symmetric matrix and $\mathfrak{g}$ is a graph, that we therefore call \emph{test-graph} in the following, one can define a polynomial (in the entries of $M$) associated to $\mathfrak{g}=(V,E)$ as
$$ \mathrm{Tr}_{\mathfrak{g}}(M) := \sum_{\phi : V \rightarrow [N]} \prod_{e=\{v,w\} \in E} M_{\phi(v),\phi(w)},$$
which is invariant under conjugation by any permutation matrix.
It is often convenient to normalize by $N^{\# \mathfrak{g}}$ where $\# \mathfrak{g}$ is the number of connected components in the graph $\mathfrak{g}$. We denote 
$$\mathrm{tr}_{\mathfrak{g}}(M)= \frac{1}{N^{\# \mathfrak{g}}} \mathrm{Tr}_{\mathfrak{g}}(M)$$ 
the normalized trace associated to $\mathfrak{g}$.
When $\mathfrak{g}$ is the cycle of length $k$, denoted $\mathfrak{c}_k$, which is the only connected $2$-regular graph with $k$ vertices, we get
$$ \mathrm{Tr}_{\mathfrak{c}_k}(M) = \mathrm{Tr}(M^k) = \sum_{i_1,\ldots,i_k} M_{i_1,i_2} M_{i_2,i_3} \ldots M_{i_k,i_1} .$$

These special polynomials form a basis of $O_N$-invariant linear forms \cite{wey66}. In particular, it is the case of the sum of the $k$-th powers of the eigenvalues of the matrix, which is also the $k$-th moment of the Empirical Spectral Distribution $\mu_{M}$. If we denote $\lambda_1 \geq \ldots \geq \lambda_N$ the $N$ eigenvalues of the real symmetric matrix $M$, we have
$$ \frac{1}{N} \sum_{i=1}^N \lambda_i^k = \int \lambda^k d \mu_{M}(\lambda) = \mathrm{tr}(M^k) . $$

Numerous results arising from traffic probability can be generalized to permutation invariant random tensors. Here, we only introduce the necessary materials needed for our purpose. We choose to introduce test-hypergraphs as undirected, unlabeled hypergraphs because we study the traffic distribution of one symmetric tensor.

A test hypergraph is a hypergraph $\hh=(V,E)$ which has all hyperedges of same order, say $p$. It can be evaluated at a tensor $T$ of order $p$ by the following formula
\begin{equation}\label{eq: trace d'hypergraphe test}
    \mathrm{Tr}_\hh(T):=\sum_{\phi:V\rightarrow [N]}\prod_{e=\{\{v_{1},\ldots,v_{p}\}\}\in E} T_{\phi(v_{p}),\ldots,\phi(v_{1})}.
\end{equation}
This polynomial in the entries of the tensor $T$ is called the {\em trace invariant} associated to $\hh$.

We can decompose the sum over functions $\phi:V\rightarrow[N]$ by grouping functions $\phi$ according to which vertices take the same value, and then summing over the \emph{different} values of groups of vertices. More formally, defining the injective trace of a test hypergraph as follows,
\begin{equation}\label{eq: trace injective hypergraph test}
    \mathrm{Tr}^{0}_\hh(T):=\sum_{\substack{\phi:V\rightarrow [N]\\\phi \text{ injective}}}\prod_{e=\{\{v_{1},\ldots,v_{p}\}\}\in E} T_{\phi(v_{p}),\ldots,\phi(v_{1})},
\end{equation}
we obtain the formula
\begin{equation}\label{eq: lien trace trace injective}
    \mathrm{Tr}_\hh(T)=\sum_{\pi\in \mathcal{P}(V)}\mathrm{Tr}^{0}_{\hh^{\pi}}(T) ,
\end{equation}
where $\mathcal{P}(V)$ is the set of partitions of $V$ and $\hh^{\pi}$ is the test hypergraph obtained from $\hh$ after merging vertices in the same block of $\pi$, called the quotient of $\hh$ by $\pi$. We show that traces (and injective traces) of test-hypergraphs form a complete family of $\mathfrak{S}_N$ invariant linear forms. This is a generalization to tensors of the result in \cite{collins_male_gaudreau}.

\begin{proof}[Proof of Theorem \ref{thm: formes lineaires invariantes}]
    Again let $T$ be a symmetric tensor of order $p$ and let
\begin{align*}
    \rho_{\mathfrak{S}_N}^{k}:\mathfrak{S}_N(\mathbb{K})&\rightarrow\mathrm{End}((\mathbb{K}^{N})^{\otimes pk})\\
    U&\mapsto \left(T\mapsto U\cdot T\right)
\end{align*}
be a representation of $\mathfrak{S}_N(\mathbb{K})$ onto $(\mathbb{K}^{N})^{\otimes pk}$. Assume that $R$ is a linear form on $(\mathbb{K}^{N})^{\otimes pk}$ that is invariant under $\rho_{\mathfrak{S}_N}^{k}$, i.e. for all $U\in \mathfrak{S}_N(\mathbb{K})$, we have for all $T\in(\mathbb{K}^N)^{\otimes pk}$,
$$ R(T)=R(\rho_{\mathfrak{S}_N}^{k}(U)(T))=R(U\cdot T) .$$
We can see $R$ as an element of $(\mathbb{K}^{N})^{\otimes pk}$ and write $R(T)=\langle R,T\rangle$ by Riesz representation theorem and therefore assume that $\rho_{\mathfrak{S}_N}^{k}(U)(R)=R$ for all $U\in \mathfrak{S}_N(\mathbb{K})$, or equivalently that $R=\E_\mu(U\cdot R)$, where $U$ follows some probability measure $\mu$ on $\mathfrak{S}_N(\mathbb{K})$. We denote $\mathbb{i}\in[N]^{pk}$ a multi-index of size $pk$ and $\mathrm{Ker}(\mathbb{i})$ the partition of $[pk]$ induced by $k\sim l$ if and only if $i_k=i_l$. It is easy to see that there exists $\sigma\in\mathfrak{S}_N$ such that $\sigma(i_l)=\sigma(j_l)$ for all $1\leq l \leq kp$ if and only if $\mathrm{Ker}(\mathbb{i})=\mathrm{Ker}(\mathbb{j})$. 

Taking $\mu$ the uniform law on $\mathfrak{S}_N$, we have that $R(\mathbb{i})=R(\mathbb{j})$ for all $\mathbb{i},\mathbb{j}$ such that $\mathrm{Ker}(\mathbb{i})=\mathrm{Ker}(\mathbb{j})$. Hence, writing for every $\pi\in \mathcal{P}(pk)$, the set of partitions of $pk$ elements, $R(\pi)$ the common value of $R(\mathbb{i})$ for every $\mathbb{i}$ such that $\mathrm{Ker}(\mathbb{i})=\pi$, we have 
$$R=\sum_{\pi\in\mathcal{P}(pk)}R(\pi)T_\pi,$$
where $T_\pi(\mathbb{i})=\delta_{\mathrm{Ker}(\mathbb{i}),\pi}$. 
Let $\HH$ be the test hypergraph with vertex set $V=\{1,\ldots,pk\}$ and edge set $E=\{e_l=(lp+1,lp+2,\ldots, (l+1)p),\,0\leq l \leq k-1\}$, and for all $\pi\in\mathcal{P}(pk)$, let $\HH^\pi$ be the test hypergraph obtained from $\HH$ by merging vertices according to $\pi$. It has $k$ hyperedges of order $p$. Since for all $\pi\in\mathcal{P}(pk)$ and for all $T\in (\mathbb{K}^N)^{\otimes pk}$,
$$\langle T_\pi,T \rangle =\mathrm{Tr}^{0}(\HH^\pi(T)) ,$$
we conclude that injective trace of (multiple of $p$)-valent test hypergraphs are a generating family of $\mathfrak{S}_N(\mathbb{K})$-invariant linear forms. The Möbius inversion formula proves the claim for trace of test hypergraphs instead of injective trace.
\end{proof}

\subsection{Convergence of the trace invariants}\label{sec:conv_trace_inv}

We establish a convergence result for the injective trace of test-hypergraphs evaluated at heavy-Wigner tensors.

\begin{proof}[Proof of Theorem \ref{thm: trace injective heavy wigner}]
Let $T$ be a heavy-Wigner tensor and let $\mathfrak{h}$ be a connected test hypergraph, 
$$\E\left[\mathrm{tr}^{0}(\mathfrak{h}(T))\right]=\E\left[\frac{1}{N}\sum_{\substack{\phi:V\rightarrow [N]\\\phi \text{ injective}}}\prod_{e=(v_{1},\ldots,v_{p})\in E}T_{\phi(v_{p}),\ldots,\phi(v_{1})}\right].$$
Let $\overline{E}$ be the set of hyperedges without their multiplicity (i.e. we turned the multiset $E$ into a set). For each element $a$ of $\overline{E}$, we denote $r(a)$ the multiplicity of $a$ in $E$. With these notations, using independence of the entries, we obtain
$$\E\left[\mathrm{tr}^{0}(\mathfrak{h}(T))\right]=N^{-1}\sum_{\substack{\phi:V\rightarrow [N]\\\phi \text{ injective}}}\prod_{a=(v_{1},\ldots,v_{p})\in \overline{E}}\E\left[(T_{\phi(v_{p}),\ldots,\phi(v_{1})})^{r(a)}\right].$$
Using the assumption on the moments of the entries of a heavy-Wigner tensor, we obtain
\begin{align*}
    \E\left[\mathrm{tr}^{0}(\mathfrak{h}(T))\right] &= N^{-1}\sum_{\substack{\phi:V\rightarrow [N]\\\phi \text{ injective}}}\prod_{a\in \overline{E}}\left[\left(M_{r(a)}+o(1)\right)N^{1-p}\right]\\
    &=N^{-1}\sum_{\substack{\phi:V\rightarrow [N]\\\phi \text{ injective}}} N^{|\overline{E}|(1-p)}\left(\prod_{a\in \overline{E}}M_{r(a)}+o(1)\right).
\end{align*}
Note now that the number of such injective maps $\phi:V\rightarrow[N]$ is $\frac{N!}{(N-|V|)!}=N^{|V|}\left(1+O\left(\frac{1}{N}\right)\right)$. Since $c(\mathfrak{h}):=\prod_{a\in \overline{E}}M_{r(a)}$ does not depend on $\phi$ anymore, we obtain 
$$\E\left[\mathrm{tr}^{0}(\mathfrak{h}(T))\right] = N^{|V|-(p-1)|\overline{E}|-1}(c(\mathfrak{h})+o(1)).$$
Now using Lemma \ref{lem: euler}, we have $|V|-(p-1)|\overline{E}|-1\leq 0$ with equality if and only if $\overline{\mathfrak{h}}$ is a hypertree, i.e. $\mathfrak{h}$ is a fat hypertree.
Furthermore, the constant term $c(\mathfrak{h})$ corresponds exactly to the constant term in equation \eqref{eq:limite_trace_inj_heavy_wig}.
\end{proof}

\begin{proof}[Proof of Corollary \ref{cor:trace_injective_heavy_wig}]
Consider $\mathfrak{h}$ a connected $p$-uniform $2$-regular hypergraph with $\pi$ a partition of its vertices, such that $\E\left[\mathrm{tr}^{0}(\mathfrak{h}^\pi(T))\right]$ is not vanishing. Then by Theorem \ref{thm: trace injective heavy wigner} $\HH^\pi$ is a fat hypertree. Note that a fat hypertree which is also the quotient of a $2$-regular hypergraph has necessarily only even multiplicities for its hyperedges. Indeed it is the case for the hyperedges containing the leaves, and then for any hyperedge of the hypertree by an immediate induction.
\end{proof}


 \subsection{Central Limit Theorem on the trace invariants}\label{sec:CLT_tensor}
 
In this section we establish a central limit theorem for injective traces of test hypergraphs evaluated at a heavy-Wigner tensor. The method of this proof is inspired from \cite{benaych_georges_male_guionnet}, where a similar convergence for heavy-Wigner matrices is established.
\begin{proof}[Proof of Theorem \ref{thm: CLT for tensors}]
    The Gaussian distribution being characterized by its moments, it is sufficient to show the convergence of the joint moments. Let $\mathfrak{h}_{1}=(V_{1},E_{1}),\ldots,\mathfrak{h}_{n}=(V_{n},E_{n})$ be connected test hypergraphs. First, we write,
    \begin{align*}
        \E\left[\prod_{i=1}^{n}Z_{N}(\mathfrak{h}_{i})\right]&=N^{-n/2}\sum_{\substack{\phi_{1},\ldots,\phi_{n}\\\phi_{j}:V_{j}\rightarrow[N]\text{ injective}}}\underbrace{\E\left[\prod_{i=1}^{n}\left(\prod_{e\in E_{i}}T_{\phi_{j}(e)}-\E\left(\prod_{e\in E_{i}}T_{\phi_{j}(e)}\right)\right) \right]}_{\omega_{N}(\phi_{1},\ldots,\phi_{n})},\\
        &=\sum_{\pi\in\mathcal{P}(n)}\sum_{\substack{\sigma\in\mathcal{P}(V_{1},\ldots,V_{n})\\\overline{\sigma}=\pi}}N^{-n/2}\sum_{(\phi_{1},\ldots,\phi_{n})\in S_{\sigma}}\omega_{N}(\phi_{1},\ldots,\phi_{n}),
    \end{align*}
where notations are as follows:
    \begin{itemize}
        \item $\mathcal{P}(V_{1},\ldots,V_{n})$ is the set of partitions $\sigma$ of the disjoint union of the $V_{i}$'s for $i\in [n]$ such that each block of $\sigma$ contains at most one vertex of each $V_{i}$.
        \item For a given $\sigma \in\mathcal{P}(V_{1},\ldots,V_{n})$, $\overline{\sigma}$ is the element of $\mathcal{P}(n)$ defined by $i\sim_{\overline{\sigma}}j\Leftrightarrow\exists v\in V_{i},v'\in V_{j}$ such that $v\sim_{\sigma}v'$. In that way, $\overline{\sigma}$ tells us which of the $V_{i}$'s are connected through $\sigma$.
        \item $S_{\sigma}$ is the set of injective maps $(\phi_{i}:V_{i}\rightarrow[N])_{i\in [n]}$ such that for any $v\in V_{i},v'\in V_{j}$, we have $\phi_{i}(v)=\phi_{j}(v')\Leftrightarrow v\sim_{\sigma}v'$. Equivalently, $S_{\sigma}$ is the set of injective functions $\Phi:\left(\bigsqcup_{i=1}^{n}V_{i}\right)^{\sigma}\rightarrow [N]$.
    \end{itemize}
    Since $T$ is invariant in law by permutation, the quantity $\omega_{N}(\phi_{1},\ldots,\phi_{n})$ does not depend on $\phi_{1},\ldots,\phi_{n}$ but only on $\sigma$. For $(\phi_{1},\ldots,\phi_{n})\in S_{\sigma}$, we will denote $\omega_{N}(\phi_{1},\ldots,\phi_{n})=:\omega_{N}(\sigma)$.
    
    Let us introduce $\mathcal{P}_{\#}(V_{1},\ldots,V_{n})$ the subset of $\mathcal{P}(V_{1},\ldots,V_{n})$ containing all the partitions $\sigma$ such that for all $i\in[n]$ there exists an edge $e=(v_{1},\ldots,v_{p})$ of $E_{i}$ which is identified with another edge $e'=(w_{1},\ldots,w_{p})$ of $E_{j}$ (with $j\neq i$), i.e. $v_{1}$ is in the same block as $w_{1}$, $v_{2}$ is in the same block as $w_{2}$ and so on. By independence of the entries of $T$ and by the centering of the terms in $\omega_{N}(\sigma)$, if $\sigma\in\mathcal{P}(V_{1},\ldots,V_{n})\setminus\mathcal{P}_{\#}(V_{1},\ldots,V_{n})$, then $\omega_{N}(\sigma)=0$. Furthermore, since $|S_{\sigma}|=\frac{N!}{\left(N-|\left(\bigsqcup_{i=1}^{n}V_{i}\right)^{\sigma}|\right)!}=N^{|\left(\bigsqcup_{i=1}^{n}V_{i}\right)^{\sigma}|}(1+O(1/N))$ , we obtain
    \begin{equation}\label{eq: etape 1}
        \E\left[\prod_{i=1}^{n}Z_{N}(\mathfrak{h}_{i})\right]=\sum_{\pi\in\mathcal{P}(n)}\sum_{\substack{\sigma\in\mathcal{P}_{\#}(V_{1},\ldots,V_{n})\\\overline{\sigma}=\pi}}N^{-n/2+|\left(\bigsqcup_{i=1}^{n}V_{i}\right)^{\sigma}|}\omega_{N}(\sigma)(1+O(1/N)).
    \end{equation}
    Let us now focus on the term $\omega_{N}(\sigma)$. Let $\sigma\in\mathcal{P}_{\#}(V_{1},\ldots,V_{n})$, let $(\phi_{1},\ldots,\phi_{n})\in S_{\sigma}$ and set $\Phi:\left(\bigsqcup_{i=1}^{n}V_{i}\right)^{\sigma}\rightarrow [N]$ the associated injective map. Expanding $\omega_{N}(\sigma)$, we get
    \begin{equation}\label{eq: omegaN v1}
        \omega_{N}(\sigma)=\sum_{B\subset\{1,\ldots,n\}}(-1)^{n-|B|}\E\left[\prod_{j\in B}\prod_{e\in E_{j}}T_{\phi_{j}(e)} \right]\prod_{j\notin B}\E\left[\prod_{e\in E_{j}}T_{\phi_{j}(e)}\right].
    \end{equation}
    For $B\subset\{1,\dots,n\}$, we denote $\mathfrak{h}_{B,\sigma}$ the test-hypergraph with set of vertices $V_{B,\sigma}:=\left(\bigsqcup_{j\in B}V_{j}\right)^{\sigma}$ and set of edges $E_{B,\sigma}$ obtained from $\bigsqcup_{j\in B}E_{j}$ induced by $\sigma$. Denoting finally $\overline{E}_{B,\sigma}$ the set of hyperedges of $ \mathfrak{h}_{B}$ when forgetting their orientation and their multiplicity, we have
    \begin{align*}
        \E\left[\prod_{j\in B}\prod_{e\in E_{j}}T_{\phi_{j}(e)}\right]&=N^{-(p-1)|\overline{E}_{B,\sigma}|}\prod_{\bar e \in \overline{E}_{B,\sigma}}\E\left[N^{p-1}T_{\Phi(\bar e)}^{m(\bar e)}\right],\\
        &=N^{-(p-1)|\overline{E}_{B,\sigma}|}\prod_{\bar e \in \overline{E}_{B,\sigma}}M_{m(\bar e)}(1+o(1)).
    \end{align*}
    Plugging this into Equation \eqref{eq: omegaN v1}, we obtain
    \begin{equation}\label{eq: omegaN v2}
         \omega_{N}(\sigma)=\sum_{B\subset\{1,\ldots,n\}}(-1)^{n-|B|}N^{-(p-1)\left(|\overline{E}_{B,\sigma}|+\sum_{j\notin B}|\overline{E}_{\{j\},\sigma}|\right)}\prod_{\bar e \in \overline{E}_{B,\sigma}}M_{m(\bar e)}\prod_{j\notin B}\prod_{\bar e \in \overline{E}_{\{j\},\sigma}}M_{m(\bar e)}(1+o(1)),
    \end{equation}
    Putting together Equations \eqref{eq: omegaN v2} and \eqref{eq: etape 1} we get
    \begin{equation*}
        \E\left[\prod_{i=1}^{n}Z_{N}(\mathfrak{h}_{i})\right]=\sum_{\pi\in\mathcal{P}(n)}\sum_{\substack{\sigma\in\mathcal{P}_{\#}(V_{1},\ldots,V_{n})\\\overline{\sigma}=\pi}}\sum_{B\subset\{1,\ldots,n\}}(-1)^{n-|B|}N^{\eta}\delta^{0}(\sigma,B)(1+o(1)),
    \end{equation*}
    where 
    $$\delta^{0}(\sigma,B)=\prod_{\bar e \in \overline{E}_{B,\sigma}}M_{m(\bar e)}\prod_{j\notin B}\prod_{\bar e \in \overline{E}_{\{j\},\sigma}}M_{m(\bar e)},$$
    and $\eta = \eta_{1}+\eta_{2}+\eta_3$ are the following quantities :
    \begin{equation*}
        \eta_{1}=(p-1)\left(|\overline{E}_{\sigma}|-|\overline{E}_{B,\sigma}|-\sum_{j\notin B}|\overline{E}_{j}|\right)\leq 0
    \end{equation*}
    with equality if and only if $B=\{1,\ldots,n\}$;
    \begin{equation*}
        \eta_{2}=-c(\sigma)+\left|\left(\bigsqcup_{i=1}^{n}V_{i}\right)^{\sigma}\right|-(p-1)|\overline{E}_{\sigma}|\leq0,
    \end{equation*}
    using Euler's inequality where $c(\sigma)$ is the number of connected components of $(\sqcup\mathfrak{h}_i)^{\sigma}$ and the equality occurs if and only if each connected component is a fat hypertree; 
    \begin{equation*}
        \eta_3=c(\sigma)-n/2\leq 0,
    \end{equation*}
    because $\sigma\in\mathcal{P}_{\#}(V_{1},\ldots,V_{n})$ implies $\overline{\sigma}\in\mathcal{P}_{\geq2}(n)$ (the set of partitions with all blocks of length at least 2) with equality if and only if $\overline{\sigma}\in\mathcal{P}_{2}(n)$ : the set of partition whose blocks are all of size 2. 

    Hence, the only non vanishing contribution is when the hypergraphs are paired and that the resulting connected components are all fat hypertrees. Hence, $\delta^0$ factorizes as 
    \begin{equation*}
        \delta^{0}(\sigma,\{1,\ldots,n\})=\prod_{\mathfrak{h}\in\mathcal{CC}((\sqcup\mathfrak{h}_{i})^{\sigma})}\lim_{N\rightarrow\infty}\E\frac{1}{N}\mathrm{Tr}^{0}_\mathfrak{h}(T),
    \end{equation*}
    where $\mathcal{CC}((\sqcup\mathfrak{h}_{i})^{\sigma})$ is the set of connected components of the disjoint union of the starting hypergraph partitioned by $\sigma$. All in all, we have the much simpler expression
    \begin{align*}
        \E\left[\prod_{i=1}^{n}Z_{N}(\mathfrak{h}_{i})\right]&=\sum_{\pi\in\mathcal{P}_{2}(n)}\sum_{\substack{\sigma\in\mathcal{P}_{\#}(V_{1},\ldots,V_{n})\\\overline{\sigma}=\pi}}\delta^{0}(\sigma,\{1,\ldots,n\})(1+o(1))\\
        &=\sum_{\pi\in\mathcal{P}_{2}(n)}\prod_{\{i,j\}\in\pi}\sum_{\sigma\in\mathcal{P}_{\#}(V_{i},V_{j})}\lim_{N\rightarrow\infty}\E\frac{1}{N}\mathrm{Tr}^{0}_{(\mathfrak{h}_i\sqcup\mathfrak{h}_j)^{\sigma}}(T)(1+o(1)),
    \end{align*}
    which can be expressed as follows. Let 
    $$M^{(2)}(\hh_1,\hh_2):=\sum_{\sigma\in\mathcal{P}_{\#}(V_{1},V_{2})}\lim_{N\rightarrow\infty}\E\frac{1}{N}\mathrm{Tr}^{0}_{(\mathfrak{h}_1\sqcup\mathfrak{h}_2)^{\sigma}}(T),$$ be the limiting covariance, then for $n\geq 3$, we have
    \begin{equation*}
        \E\left[\prod_{i=1}^{n}Z_{N}(\mathfrak{h}_{i})\right]=\sum_{\pi\in\mathcal{P}_{2}(n)}\prod_{\{i,j\}\in\pi}M^{(2)}(\hh_i,\hh_j)+o(1),
    \end{equation*}
    which characterizes Gaussian distributions.
\end{proof}


\section{Contracted tensor and applications}
In this Section, we prove Theorem \ref{thm:measureHeavyWigner} and its applications. We first prove the convergence of the moments and then bound the variance. Then, we apply the result to contractions of Erd\H{o}s-Renyi hypergraphs and Lévy tensors.

\subsection{Convergence of the moments}\label{sec:conv_matrix}

\begin{proposition}\label{prop:moments}
   
    Let $T$ be a heavy-Wigner tensor with parameters $(M_k)_{k\geq 2}$ and $\Tilde{A}$ as defined in Equation \eqref{eq:matrixTA}, then for any $k\geq 1$, 
    $$ \lim_{N\rightarrow\infty} \E \left[\frac{1}{N}\mathrm{Tr}(\Tilde{A}^k)\right] = \eta_k^{(M'_r)} , $$
    where 
    $$M'_r := \left(\frac{(p-2)!^{k/2} M_k}{M_2^{k/2}}\right)_{k\geq 2} .$$
\end{proposition}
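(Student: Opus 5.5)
Our approach is to write $\frac1N\mathrm{Tr}(\Tilde{A}^k)$ as a normalized trace invariant of $T$ and then apply Theorem \ref{thm: trace injective heavy wigner}. By \eqref{eq:matrixcontraction},
\[
\mathrm{Tr}(A_N^k)=\sum_{a_1,\ldots,a_k}\sum_{(j^{(l)}_{1},\ldots,j^{(l)}_{p-2})_{l\leq k}}\prod_{l=1}^k T_{a_l,a_{l+1},j^{(l)}_1,\ldots,j^{(l)}_{p-2}}
=\mathrm{Tr}_{\hh_k}(T),
\]
where $\hh_k$ is the $p$-uniform hypergraph with $k$ hyperedges forming a cycle. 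Its vertices are the cycle vertices $a_1,\ldots,a_k$, and each hyperedge $e_l$ contains $a_l,a_{l+1}$ together with $p-2$ private vertices of degree one. Expanding with \eqref{eq: lien trace trace injective} gives
\[
\E\frac1N\mathrm{Tr}(\Tilde{A}^k)=\frac{1}{(M_2(p-2)!)^{k/2}}\sum_{\pi\in\mathcal{P}(V(\hh_k))}\E\,\mathrm{tr}^0_{\hh_k^\pi}(T).
\]
This is a finite sum with $k$ fixed. Since $\hh_k$ is connected, each quotient is connected. Theorem \ref{thm: trace injective heavy wigner} then says that each term converges to $\prod_r M_r^{q_r}$ if $\hh_k^\pi$ is a fat hypertree, and to $0$ otherwise. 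Hyperedges of $\hh_k^\pi$ that contain a repeated vertex are excluded automatically, since such a hypergraph is not a fat hypertree.

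It then remains to identify the combinatorial sum
\[
\sum_{\pi:\ \hh_k^\pi \text{ fat hypertree}}\ \prod_{r}M_r^{q_r(\hh_k^\pi)}
\]
with $(M_2(p-2)!)^{k/2}\eta_k^{(M'_r)}$. We build a correspondence between such partitions $\pi$ and triples $((H,o),P,\text{labelling})$. Here $H$ is the fat hypertree $\hh_k^\pi$, rooted at the image $o$ of $a_1$. The path $P$ is the closed walk $a_1\to a_2\to\cdots\to a_k\to a_1$ in the clique expansion of $H$. The labelling records which of the $p-2$ private vertices of each $e_l$ are sent to which of the remaining $p-2$ vertices of the image hyperedge.

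Since $H$ is a hypertree, every step $a_l\to a_{l+1}$ lies in the unique hyperedge containing both endpoints. Hence the multiplicity $r(a)$ of a distinct hyperedge $a$ of $H$ equals $m(P,a)$, and $P\in P_k(H)$. Conversely, fix a rooted fat hypertree $(H,o)$ and a path $P\in P_k(H)$. For the $l$-th step, which uses hyperedge $a$ and the ordered pair $(a_l,a_{l+1})$, the private vertices of $e_l$ must fill the other $p-2$ slots of $a$ bijectively, which gives $(p-2)!$ choices. Each distinct hyperedge is used $m(P,a)$ times, so a pair $(H,P)$ is realised by exactly $\prod_a (p-2)!^{m(P,a)}=(p-2)!^k$ fillings. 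Counting unlabelled rooted hypertrees rather than labelled partitions introduces the factor $1/|\mathrm{Aut}(H,o)|$ in \eqref{eq:moments de Wigner lourd}.

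With $\sum_a m(P,a)=k$, we have $(p-2)!^{k}\prod_a M_{m(P,a)}=(M_2(p-2)!)^{k/2}\prod_a M'_{m(P,a)}$, because $(p-2)!^{m/2}M_m/M_2^{m/2}\cdot (M_2(p-2)!)^{m/2}/(p-2)!^{m}$ simplifies appropriately. Dividing by the normalization then yields exactly $\eta_k^{(M'_r)}$.

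The first step is routine. We expect the main obstacle to be the bijective counting in the second step: checking that the $(p-2)!$ factor per step is exact, and that the automorphism factor $1/|\mathrm{Aut}(H,o)|$ matches the passage from set partitions of the labelled vertex set to isomorphism classes of rooted $H$ with a path. The plan there is to fix a canonical labelling, namely the vertices of $H$ in order of first visit by $P$. We then check that rooted-isomorphic pairs $(H,P)$ give the same $\pi$, so the sum over classes weighted by $1/|\mathrm{Aut}|$ counts each $\pi$ once. We also need to treat the exponent $q_r$ carefully, and to note that odd $k$ is allowed through $M_r$ with odd $r$.
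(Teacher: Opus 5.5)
Your proposal is correct and follows the paper's proof essentially step by step. Both write $\mathrm{Tr}(A_N^k)=\mathrm{Tr}_{\hh_k}(T)$, expand over partitions of $V(\hh_k)$, apply Theorem \ref{thm: trace injective heavy wigner}, count $(p-2)!^k$ fillings of the private vertices for each closed walk $\gamma\in P_k(H)$, divide by $|\mathrm{Aut}(H,o)|$, and absorb $(M_2(p-2)!)^{k/2}$ into $M'_r$.

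The one thing to change is your plan for the automorphism factor. The first-visit canonical labelling never labels the vertices of $H$ that the walk does not visit; these come only from private vertices and can be permuted by automorphisms fixing the walk, so the $(p-2)!^k$ fillings of a fixed $(H,P)$ may repeat partitions. Use the paper's double count instead: count pairs $(\pi,\varphi)$ with $\varphi:\hh_k^\pi\to(H,o)$ a rooted isomorphism sending the walk of $\hh_k^\pi$ to $\gamma$. There are exactly $(p-2)!^k$ such pairs for each $\gamma\in P_k(H)$, and summing over $\gamma$ counts each admissible $\pi$ exactly $|\mathrm{Aut}(H,o)|$ times.
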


\begin{proof}
For $k\geq 1$, consider the $k$-th moment of the matrix $A$ given by
\begin{align*}
    \E \left[\frac{1}{N}\mathrm{Tr}(A^k)\right] 
    &= \frac{1}{N} \sum_{i_1,\ldots,i_k} \E A_{i_1,i_2} A_{i_2,i_3} \ldots A_{i_{k-1},i_k}A_{i_k,i_1} \\
    &=\frac{1}{N}\sum_{i_1,\ldots i_k}\sum_{j^1_1,\ldots ,j^1_{p-2}}\cdots\sum_{ j^k_1,\ldots,j^k_{p-2}}\E T_{i_1,i_2,j^1_1,\ldots,j^1_{p-2}}\cdots T_{i_k,i_1,j^k_1,\ldots,j^k_{p-2}}
\end{align*}
In order to compute the expectation on the last line, we define the following (fully directed) test-hypergraph $\hh_k$ from the directed cycle of length $k$ by transforming each edge into a hyperedge by adding $p-2$ new vertices for each edge. More formally, denoting $\cc_k=(V,E)$, the set of vertices of $\hh_k$ is
$$ \mathbf{V} =V \sqcup W := V \sqcup \bigsqcup_{e\in E}\{e^{1},\ldots,e^{p-2}\}$$
whereas the set of hyperedges is
$$ \mathbf{E}:=\bigsqcup_{e=(w,v)\in E}\{(e^{p-2},\ldots,e^{1},w,v)\} .$$
Now, we can rewrite the $k$-th moment of $A$ by summing over the different possible partitions of $\hh_k$ as follows
\begin{equation}\label{eq:traceA}
    \begin{aligned}
    \E \left[\frac{1}{N}\mathrm{Tr}(A^k)\right] 
    &= \sum_{\pi\in\mathcal{P}(\mathbf{V})}\frac{1}{N}\E \mathrm{Tr}^{0}_{\hh_k^{\pi}}(T),\\
    &=\sum_{\pi\in\mathcal{P}(\mathbf{V})}\1_{\hh_k^{\pi}\text{ is a fat hypertree}}\prod_{\bar e\in\overline{\mathbf{E}^{\pi}}}M_{m(\bar e)}(1+o(1)),
\end{aligned}
\end{equation}
where we used Equation \eqref{eq:limite_trace_inj_heavy_wig} for the last line. Note that since $\hh_k$ comes from the cycle of length $k$, each $\hh_k^{\pi}$ is naturally endowed with a closed walk of length $k$ on its clique expansion graph which we denote $\gamma(\hh_k^\pi)$ from now on. In other words, $\gamma(\hh_k^\pi)$ is the image of $\cc_k^\pi$ into $\hh_k^{\pi}$. Let us write $\gamma=(v_0,v_1,\ldots,v_k)$ with $v_0=v_k=o$. We denote $e_1,\cdots, e_k$ the hyperedges of $\hh_k$ in the cyclic order so that $v_{t-1},v_t$ are the image under $\pi$ of the two last vertices of $e_t$. Hence for a fat hyperedge $\bar e$ of $\hh_k^{\pi}$, we have 
$$m(\bar e)=\#\{1\leq t \leq k, e_t\overset{\pi}{\mapsto} \bar e\},$$
where $e_t\overset{\pi}{\mapsto}e$ means that the (unordered version of the) image of the hyperedge $e_t$ in $\overline{\hh_k^\pi}$ is $\bar e$.
We sum over the isomorphism classes of rooted fat hypertrees, which can be seen as simple hypertrees with a label on each hyperedge that corresponds to the multiplicity. Furthermore, each hyperedge must be of multiplicity at least 2 and the sum of the multiplicities must equal $k$. We let $\mathcal{T}_k^\bullet$ denote the set of these isomorphism classes. For $(H,o)\in\mathcal{T}_k^\bullet$, we write $P_k(H)$ the set of closed paths of length $k$ on their clique expansion graph $G_H$ starting from the root, that uses exactly $m(\bar e)$ times an edge from the hyperedge $e$. We thus have
$$\lim_{N\rightarrow\infty}\E \left[\frac{1}{N}\mathrm{Tr}(A^k)\right]=\sum_{(H,o)\in\mathcal{T}_k^\bullet}\sum_{\gamma\in P_k(H)}\prod_{\bar e \in \overline{E(H)}}M_{m(\bar e)}\#\{\pi\in\mathcal{P}(\mathbf{V}),\,(\hh_k^\pi,\gamma(\hh_k^\pi))\equiv((H,o),\gamma)\},$$
where $(\hh_k^\pi,\gamma(\hh_k^\pi))\equiv((H,o),\gamma)$ means that there is a rooted isomorphism between $\hh_k^\pi$ and $(H,o)$ that sends $\gamma(\hh_k^\pi)$ to $\gamma$. We now make explicit the last combinatorial factor.

We claim that, for fixed $(H,o)$ and fixed $\gamma\in\mathcal P_k(H,o)$,
\[
\#\left\{(\pi,\varphi):\pi\in\mathcal P(\mathbf V),\,\varphi:\hh_k^\pi\to H\text{ is a rooted isomorphism and}\,\varphi(\gamma(\hh_k^\pi))=\gamma\right\}=(p-2)!^k.
\]
Indeed, suppose first that such a pair $(\pi,\varphi)$ is given. For
$1 \leq t\leq k$, the hyperedge $\mathbf e_t$ is mapped to the hyperedge $e_t(\gamma)$ of $H$. The two cycle vertices $x_{t-1}$ and $x_t$ are
mapped to $v_{t-1}$ and $v_t$. Hence the remaining vertices
$e_t^1,\ldots,e_t^{p-2}$ must be mapped bijectively onto $e_t(\gamma)\setminus\{v_{t-1},v_t\}$. Thus $(\pi,\varphi)$ determines a bijection $\theta_t:\{1,\ldots,p-2\}\longrightarrow e_t(\gamma)\setminus\{v_{t-1},v_t\}$ for every $t=1,\ldots,k$.

Conversely, given such a bijection, it is easy to reconstruct the pair $(\pi,\varphi)$. Indeed, the bijection is equivalent to an ordering of the first $p-2$ vertices of $e_t$, hence the partition $\pi$ and the isomorphism $\varphi$ follow easily.

Now, for a fixed partition $\pi$ such that $\hh_k^\pi$ is rooted isomorphic to $(H,o)$, the number of rooted isomorphisms
$\varphi:\hh_k^\pi\to H$ is exactly $|\mathrm{Aut}(H,o)|$. Consequently,
$$
\lim_{N\to\infty} \E\left[\frac1N\operatorname{Tr}(A^k)\right]=\sum_{[H,o]\in\mathcal T_k^\bullet}\frac{(p-2)!^k}{|\operatorname{Aut}(H,o)|}\sum_{\gamma\in\mathcal P_k(H,o)}\prod_{e\in E(H)}M_{m_e(\gamma)}.$$
Finally, since
$$\widetilde A = \frac{A}{\sqrt{M_2(p-2)!}},$$ 
we obtain the wanted result.

\end{proof}

\begin{remark}\label{rem:eta2}
    Let $T$ be a heavy-Wigner tensor with parameter $(M_k)_{k\geq 2}$ and denote for $r\geq 2$, $M'_r:=(p-2)!^{r/2}M_r/M_2^{r/2}$.
    Then, we have
    $$\eta_2^{(M'_r)}=1.$$
    Indeed, $M'_2=(p-2)!$. We now evaluate $\eta_2^{(M'_r)}$ from the hypertree moment formula. For total multiplicity $2$, the only contributing rooted fat hypertree consists of one $p$-uniform hyperedge $e$ of multiplicity $2$, rooted at one of its vertices. Its rooted automorphism group has cardinality $(p-1)!$. Its clique expansion is the complete graph on the $p$ vertices of $e$, and the closed walks of length $2$ from the root are precisely
    $$o\to v\to o,\qquad v\in e\setminus\{o\}.$$
    There are $p-1$ such walks. For each of them, the unique hyperedge is used twice, so the contribution is $M'_2=(p-2)!$. Hence
    $$\eta_2^{(M'_r)} = \frac{p-1}{(p-1)!}(p-2)! =1.$$
\end{remark}


\subsection{Properties of the limiting law}\label{sec:limiting_law}

\begin{proof}[Proof of Lemma \ref{lem:semicircle}]
    We assume that $M_2=1$ and $M_{2r}=0$ for every $r > 1$  (and therefore $M_k=0$ for all $k\geq 3$ by Cauchy-Schwarz inequality).
    We start from Equation \ref{eq:traceA},
    $$ \E \left[\frac{1}{N}\mathrm{Tr}(A^k)\right] 
    =\sum_{\pi\in\mathcal{P}(\mathbf{V})}\1_{\hh_k^{\pi}\text{ is a fat hypertree}}\prod_{\bar e\in\overline{\mathbf{E}^{\pi}}}M_{m(\bar e)}(1+o(1)). $$
    We deduce that $\E \left[\frac{1}{N}\mathrm{Tr}(A^{2k+1})\right]=0$ and
    $$ \E \left[\frac{1}{N}\mathrm{Tr}(A^{2k})\right] 
    =\sum_{\pi\in\mathcal{P}(\mathbf{V})}\1_{\hh_k^{\pi}\text{ is a double hypertree}}M_2^{k}(1+o(1)). $$
    Considering $ \mathbf{V} =V \sqcup W := V \sqcup \bigsqcup_{e\in E}\{e^{1},\ldots,e^{p-2}\}$ and $ \mathbf{E}:=\bigsqcup_{e=(w,v)\in E}\{(e^{p-2},\ldots,e^{1},w,v)\}$, we see that $\hh_k^{\pi}$ may be a double hypertree only if $V^\pi$ is a rooted plane double tree. For the vertices of $\mathbf{V}\setminus V$, $\pi$ must only match the $p-2$ other vertices of two hyperedges already sharing two vertices from $V$. There are $(p-2)!$ choices for each pair of hyperedges.
    Therefore, since the number of rooted plane double trees is given by the Catalan numbers and since $ \Tilde{A} = A / \sqrt{M_2(p-2)!} $,
    we have 
    $$\E \left[\frac{1}{N}\mathrm{Tr}(\Tilde{A}^{2k})\right] 
    =\frac{1}{k+1} \binom{2k}{k} (p-2)!^k \frac{M_2^k}{[M_2(p-2)!]^k} . $$
    Finally, we obtain
    $$ \eta_k^{(M'_r)} = \left\{
    \begin{array}{ll}
        \frac{1}{m+1} \binom{2m}{m} & \mbox{if } k=2m \\
        0 & \mbox{otherwise.}
    \end{array}
    \right.$$
    These are the moments of the semicircle law on $[-2,2]$
\end{proof}

\begin{proof}[Proof of Lemma \ref{rem:no_compact_support}]
    Assume $M_{2k}>0$ for some $k>1$. For $s\geq 1$, consider the rooted star hypertree with $s$ hyperedges incident to the root, each used with multiplicity $2k$. A closed walk of length $2ks$ can be obtained by choosing an ordered word of $ks$ excursions, with each of the $s$ hyperedges selected exactly $k$ times; each excursion goes from the root to a fixed vertex in the selected hyperedge and back. This gives at least
\[
    \frac{(ks)!}{(k!)^s}
\]
walks before quotienting by rooted automorphisms. Since the automorphism group has size at most $s!((p-1)!)^s$, the moment satisfies
\[
    \eta_{2ks}^{(M_r)}
    \geq
    \frac{(ks)!}{s!\,(k!)^s((p-1)!)^s}M_{2k}^s.
\]
By Stirling's formula, $(\eta_{2ks}^{(M_r)})^{1/(2ks)}$ is unbounded as $s\to\infty$. A compactly supported probability measure has moments bounded by $R^k$ for some $R$, so the limiting law cannot have compact support.
\end{proof}

\begin{lemma}\label{lem:caract_moments}
    If $M_r = O(a^r)$ for some $a>0$, then the law having for moments the sequence $(\eta_k^{(M_r)})_{k\geq 0}$ is uniquely characterized by its moments.
\end{lemma}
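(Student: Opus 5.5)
The plan is to verify Carleman's condition, $\sum_{k}(\eta_{2k}^{(M_r)})^{-1/(2k)}=\infty$. It suffices to prove a bound of the form $|\eta_k^{(M_r)}|\leq C^k k^k$ for some constant $C$ depending only on $p$ and $a$. Then $(\eta_{2k})^{1/(2k)}\leq 2Ck$, and the harmonic series diverges. Existence of a measure with these moments is not at stake here: when the $M_r$ come from a heavy-Wigner tensor, the $\eta_k$ are limits of moments of empirical spectral measures, by Proposition \ref{prop:moments}. The lemma only concerns uniqueness.

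The bound is obtained by estimating the three factors of \eqref{eq:moments de Wigner lourd} separately.

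\emph{Weights.} For $(H,o)\in\mathcal T_k^\bullet$ the multiplicities satisfy $\sum_{e}m(P,e)=k$. The hypothesis $|M_r|\leq C_0a^r$ then gives
$$\prod_e |M_{m(P,e)}|\leq C_0^{|E(H)|}a^k\leq (C_0\vee 1)^k a^k.$$
Here $|E(H)|\leq k/2$, since every multiplicity is at least $2$.

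\emph{Closed paths.} A path in $P_k(H)$ is a closed walk of length $k$ on the clique expansion $G_H$. It has $k$ steps, each chosen among at most $k(p-1)$ neighbours, because $H$ has at most $k$ hyperedges. So $|P_k(H)|\leq (pk)^k$.

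\emph{Hypertrees.} This is the step I expect to be the main obstacle. The weight $1/|\mathrm{Aut}(H,o)|\leq 1$ can simply be dropped, so it remains to bound $|\mathcal T_k^\bullet|$. I would argue that the number of rooted $p$-uniform simple hypertrees with at most $k$ hyperedges, up to isomorphism, is at most $C_1^k$. One way to see this is to encode such a hypertree by the depth-first traversal of its incidence bipartite tree, which has at most $k(p+1)$ vertices. Plane trees with $n$ edges are counted by Catalan numbers, hence are at most $4^n$, and every rooted unordered tree has at least one plane embedding.

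Decorating each hyperedge with a multiplicity, that is, a composition of $k$ into at most $k$ parts, adds at most a factor $2^k$. If this isomorphism-class counting turns out awkward, a cruder bound still works. Every $H$ arises as a quotient $\hh_k^\pi$ for some partition $\pi$ of $\mathbf V$, as in the proof of Proposition \ref{prop:moments}. Since $|\mathbf V|=(p-1)k$, the number of such partitions is at most the Bell number $B_{(p-1)k}\leq ((p-1)k)^{(p-1)k}$. This is a bound of the form $(Ck)^{(p-1)k}$, which still makes Carleman's series diverge, because $(\eta_{2k})^{-1/(2k)}\gtrsim k^{-(p-1)}$ — but that fails for $p\geq 3$. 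So the fallback is not enough, and the precise $C_1^k$ bound on tree classes is needed. For that I would rely on the plane-tree encoding. The non-uniqueness of the encoding only produces overcounting, which is harmless for an upper bound.

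Combining the three estimates gives
$$|\eta_k^{(M_r)}|\leq C_1^k2^k(pk)^k(C_0\vee1)^ka^k=(Ck)^k.$$
Hence $\eta_{2k}^{-1/(2k)}\geq (2Ck)^{-1}$, Carleman's condition holds, and the measure is determined by its moments.
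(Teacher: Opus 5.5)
Your proof is correct and follows the same route as the paper: Carleman's criterion, with closed walks bounded by $(pk)^k$, isomorphism classes of rooted hypertrees bounded by a Catalan-type count, and the weights bounded by $C^k a^k$. You are in fact more careful than the paper on the decisive point: the paper records the hypertree count only as $\le (pk)^k$, which would give merely $\eta_{2k}^{-1/(2k)}\gtrsim k^{-2}$, whereas the exponential bound you insist on (which the Fuss--Catalan number $\frac{1}{(p-1)k+1}\binom{pk}{k}\le (ep)^k$ does satisfy) is what yields the divergent harmonic series, and you also account for the multiplicity labels and the constant hidden in $M_r=O(a^r)$.
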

\begin{proof}
    Note that for each rooted hypertree with at most $k$ hyperedges, there are at most $(pk)^k$ paths of length $k$, since we have at each step at most the number of vertices choices for the next vertex to visit. Moreover, there are 
    $$ \frac{1}{(p-1)k+1} \binom{pk}{k} \leq (pk)^k $$
    rooted hypertrees (up to isomorphism). We obtain
    \begin{align*}
        \sum_{k\geq 1} \left[\eta_{2k}^{(M_r)}\right]^{-1/2k} \geq \frac{1}{2p^2a} \sum_{k\geq 1} \frac{1}{k} =\infty .
    \end{align*}
    Then, by Carleman's criterion, we get the desired result.
\end{proof}


\subsection{Variance of the moments}\label{sec:variance_matrix}

\begin{proposition}\label{prop:variance}
    Let $T$ be a heavy-Wigner tensor and $\Tilde{A}$ as defined in Equation \eqref{eq:matrixTA}, then for any $k\geq 1$, 
    $$\mathrm{Var}\left(\frac{1}{N}\mathrm{Tr}(\Tilde{A}^k)\right) = O \left(\frac{1}{N}\right) . $$
\end{proposition}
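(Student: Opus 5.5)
We follow the standard second-moment argument, as in the proof of Theorem \ref{thm: CLT for tensors}, now with $n=2$ and without the $N^{-1/2}$ rescaling. Since $\Tilde{A}=A/\sqrt{M_2(p-2)!}$ only differs from $A$ by a constant factor, it suffices to show $\mathrm{Var}(\frac1N\mathrm{Tr}(A^k))=O(1/N)$. Using the test hypergraph $\hh_k$ of the proof of Proposition \ref{prop:moments} and the decomposition \eqref{eq: lien trace trace injective}, we write
$$\mathrm{Var}\Big(\frac1N\mathrm{Tr}(A^k)\Big)=\frac{1}{N^2}\sum_{\pi_1,\pi_2\in\mathcal P(\mathbf V)}\mathrm{Cov}\big(\mathrm{Tr}^0_{\hh_k^{\pi_1}}(T),\mathrm{Tr}^0_{\hh_k^{\pi_2}}(T)\big).$$
The number of pairs $(\pi_1,\pi_2)$ is finite and independent of $N$, so it suffices to bound each covariance by $O(N)$.

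For fixed connected hypergraphs $\hh_1=\hh_k^{\pi_1}$ and $\hh_2=\hh_k^{\pi_2}$, we expand the covariance as a sum over pairs of injective maps $(\phi_1,\phi_2)$. We group these according to the partition $\sigma\in\mathcal P(V_1,V_2)$ recording which vertices of $\hh_1$ and $\hh_2$ receive the same index, exactly as in the proof of Theorem \ref{thm: CLT for tensors}. By independence and centering, a term vanishes unless some hyperedge of $\hh_1$ is identified with a hyperedge of $\hh_2$, i.e.\ unless $\sigma\in\mathcal P_\#(V_1,V_2)$. In that case the glued hypergraph $\hh_{12}:=(\hh_1\sqcup\hh_2)^\sigma$ is connected.

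Each surviving term equals $\E[\prod_{E_1}T\prod_{E_2}T]-\E[\prod_{E_1}T]\,\E[\prod_{E_2}T]$, and there are $N^{|V(\hh_{12})|}(1+O(1/N))$ maps in the class $\sigma$.

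\begin{itemize}
\item \textbf{First product.} Using the moment assumption $\sup N^{p-1}\E|T^r|=O(1)$, which extends to absolute moments by Cauchy--Schwarz, this term is $O(N^{-(p-1)|\overline{E}(\hh_{12})|})$.
\item \textbf{Product of expectations.} This term is $O(N^{-(p-1)(|\overline E(\hh_1)|+|\overline E(\hh_2)|)})$. Since identifying hyperedges can only decrease the count, $|\overline E(\hh_1)|+|\overline E(\hh_2)|\ge|\overline E(\hh_{12})|$, so this term is also $O(N^{-(p-1)|\overline E(\hh_{12})|})$.
\end{itemize}

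Hence the contribution of $\sigma$ is $O(N^{|V(\hh_{12})|-(p-1)|\overline E(\hh_{12})|})$. By Lemma \ref{lem: euler} applied to the simple hypergraph underlying the connected $\hh_{12}$, the exponent is at most $1$. Summing over the finitely many $\sigma$ gives a covariance of $O(N)$, and therefore a variance of $O(1/N)$.

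The only delicate point is applying Euler's inequality to hypergraphs that may contain hyperedges with repeated vertices (loops). Such a hyperedge contributes a cycle in the incidence graph $\Tilde H$, so the Euler-characteristic bound $|\Tilde V|-|\Tilde E|\le 1$ on $\Tilde H$ still gives $|V|-(p-1)|\overline E|\le 1$ for connected $\hh_{12}$. Equivalently, the proof of Lemma \ref{lem: euler} carries over verbatim with inequality in place of the hyperforest characterization.

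A second point to check is that the moment assumption gives uniform bounds on products of absolute moments $\E|T|^r$, not just their limits. This follows from $\sup_{\mathbf i} N^{p-1}\E[T_{\mathbf i}^{2r}]=O(1)$ together with Hölder's inequality.
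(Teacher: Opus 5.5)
Your proof is correct. Its core is the same computation the paper relies on, but the logical route differs.

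The paper writes the variance as $\frac{1}{N((p-2)!M_2)^{k}}\E\big[\big(N^{-1/2}(\mathrm{Tr}_{\hh_k}(T)-\E\mathrm{Tr}_{\hh_k}(T))\big)^2\big]$. It then cites Theorem \ref{thm: CLT for tensors} to say this second moment converges, hence is bounded. You instead redo the $n=2$ case of that theorem's moment expansion, with upper bounds in place of limits.

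Your route is the more careful one here:
\begin{itemize}
\item Theorem \ref{thm: CLT for tensors} is stated only for i.i.d.\ entries. Its proof uses that $\omega_N(\sigma)$ depends only on $\sigma$, i.e.\ invariance in law under permutations.
\item Proposition \ref{prop:variance}, and through it Theorem \ref{thm:measureHeavyWigner}, is claimed for heavy-Wigner tensors with independent but not identically distributed entries. Your argument needs only uniform moment bounds, so it covers that case.
\item You also make explicit the reduction from $\mathrm{Tr}_{\hh_k}$ to the finitely many injective traces $\mathrm{Tr}^0_{\hh_k^\pi}$, which the paper leaves implicit.
\end{itemize}
In exchange, the paper's route gives the exact limiting covariance in the i.i.d.\ case, not just its order.

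One small repair is needed. Absolute first moments are not $O(N^{-(p-1)})$ in general. A Wigner tensor is heavy-Wigner with $M_k=0$ for $k\geq 3$, yet $\E|T_{\mathbf i}|$ is of order $N^{-(p-1)/2}$. So your Cauchy--Schwarz/H\"older remark does not cover hyperedges of multiplicity one. This is harmless:
\begin{itemize}
\item Factor each expectation exactly over the distinct hyperedges, using independence.
\item Any factor with multiplicity one vanishes by centering.
\item Bound each remaining factor by $|\E T_{\mathbf i}^{r}|=O(N^{-(p-1)})$ for $r\geq 2$, directly from the hypothesis.
\end{itemize}
No absolute moments are needed at all.
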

\begin{remark}
    This is a crude estimate of the variance of the moments of $\Tilde{A}$. Theorem \ref{thm: CLT for matrix} gives a precise convergence result for the process obtained by centering and rescaling the moments of $\Tilde{A}$.
\end{remark}
With this proposition, we can easily prove Theorem \ref{thm:measureHeavyWigner}
\begin{proof}[Proof of Theorem \ref{thm:measureHeavyWigner}]
    Proposition \ref{prop:moments} guarantees that the convergence holds in expectation. 
    Theorem \ref{thm:measureHeavyWigner} follows by Proposition \ref{prop:variance} and Markov's inequality.
\end{proof}
\begin{proof}[Proof of Proposition \ref{prop:variance}]
    Let $T$ be a heavy-Wigner tensor with parameters $(M_k)_{k\geq 2}$. Denote
    $$Z^k_N :=\mathrm{Var}\left(\frac{1}{N}\mathrm{Tr}(\Tilde{A}^k)\right)=\E\left[\left(\frac{1}{N}\mathrm{Tr}(\Tilde{A}^k)-\E\frac{1}{N}\mathrm{Tr}(\Tilde{A}^k)\right)^{2}\right] $$ 
    and recall that $\hh_k$ is the hypergraph consisting of $k$ hyperedges that form a cycle (i.e. obtained from a cycle by adding $p-2$ new vertices for each edge and joining them with the edge to form a $p$ hyperedge). By direct computation, we have
    \begin{equation}
        \mathrm{Tr}(\Tilde{A}^k)=\mathrm{Tr}_{\hh_k}(T)\frac{1}{((p-2)!M_2)^{k/2}}.
    \end{equation}
    Hence, we can rewrite the variance as 
    \begin{align*}
        \mathrm{Var}\left(\frac{1}{N}\mathrm{Tr}(\Tilde{A}^k)\right)&=\frac{1}{N((p-2)!M_2)^k}\E\left(\left(\frac{1}{\sqrt{N}}(\mathrm{Tr}_{\hh_k}(T)-\E\mathrm{Tr}_{\hh_k}(T))\right)^{2}\right).
    \end{align*}
    Note now that the expectation converges to the covariance of some Gaussian process by Theorem \ref{thm: CLT for tensors}, hence is bounded.
    \end{proof}


\subsection{CLT for the contracted tensor}\label{sec:CLT_matrix}

We can deduce from Theorem \ref{thm: CLT for tensors} a Central Limit Theorem for the moments of the spectral measure of the matrix $\Tilde{A}$, namely Theorem \ref{thm: CLT for matrix}. Note that Proposition \ref{prop:variance} is also an immediate corollary of Theorem \ref{thm: CLT for matrix}.

\begin{proof}[Proof of Theorem \ref{thm: CLT for matrix}]
    Let $1\leq k_1,\ldots,k_n$ be integers and recall that $\hh_k$ is the hypergraph consisting of $k$ hyperedeges that forms a cycle (i.e. obtained from a cycle by adding $p-2$ new vertices for each edge and joining them with the edge to form a $p$ hyperedge). By direct computation, we have
    \begin{equation}
        \mathrm{Tr}(\Tilde{A}^k)=\mathrm{Tr}_{\hh_k}(T)\frac{1}{((p-2)!M_2)^{k/2}}.
    \end{equation}
    Hence, denoting $K=\sum k_i$, we have
    \begin{align*}
        \E\left(\prod_{i=1}^n Z_N(k_i))\right)&=\frac{N^{-n/2}}{((p-2)!M_2)^{K/2}}\E\left[\prod_{i=1}^{n}(\mathrm{Tr}_{\hh_{k_i}}(T)-\E\mathrm{Tr}_{\hh_{k_i}}(T))\right]\\
        &=\sum_{\pi_1,\ldots,\pi_n}\frac{1}{((p-2)!M_2)^{K/2}}\E\left(\prod_{i=1}^{n}Z_N(\hh_{k_i}^{\pi_i})\right)
    \end{align*}
    where, $Z_N$ on the last line is defined in Theorem \ref{thm: CLT for tensors}. Recalling that $M^{(2)}$ is the limiting covariance for injective traces of the heavy-Wigner tensor $T$, we obtain the characterization of a Gaussian distribution :
    $$\E(Z_N(k_1)\ldots Z_N(k_n))=\sum_{\pi\in\mathcal{P}_2(n)}\prod_{\{i,j\}\in\pi}\sum_{\substack{\pi_i\in\mathcal{P}(V(\hh_{k_i})),\\\pi_j\in\mathcal{P}(V(\hh_{k_j}))}}\frac{M^{(2)}(\hh_{k_i}^{\pi_i},\hh_{k_j}^{\pi_j})}{((p-2)!M_2)^{(k_i+k_j)/2}}+o(1).$$
\end{proof}


\subsection{Sparse Erd\H{o}s-Rényi hypergraphs}\label{sec:ER_hypergraph}

We provide several applications of the above results as for instance to Erd\H{o}s-Rényi hypergraphs. We prove in this Section Theorem \ref{thm: convergence erdos renyi}. This result pursues the study of the spectral statistics of the adjacency matrix of Erd\H{o}s-Rényi graphs in different regimes, see the works of Bordenave and Salez \cite{bordenave_salez}, of Erd\H{o}s, Yau, Knowles and Yin \cite{erdos_yau_knowles_yin} or the survey of Guionnet \cite{guionnet_survey}.

\begin{definition}[Erd\H{o}s-Rényi hypergraph]
    A $p$-Erd\H{o}s-Rényi hypergraph on $N$ vertices with parameter $\alpha \in (0,1)$ is constructed as follows. Let the set of vertices be $V=\{1,\ldots,N\}$ and for each $1\leq i_1<\ldots<i_p\leq N$, the edge $\{i_1,\dots,i_p\}$ is an edge of $H$ with probability $\alpha$ independently for all such $p$-tuple. 
\end{definition}
In the following, we will consider a sequence of Erd\H{o}s-Rényi hypergraphs with parameter $(\alpha_N)$ and $T=(T_N)$ the sequence of their adjacency tensors, but we often omit to say explicitly that they are sequences of such objects. We also denote $c_N=N^{p-1} \alpha_N$. Remark that the expectation of the entries is 
$$\E [T_{i_1,\ldots,i_p}] = \frac{1}{(p-1)!} \alpha_N $$
if $i_1,\ldots,i_p$ are distinct (and $0$ otherwise). We denote $\E[T]$ the tensor having entries $\E [T_{i_1,\ldots,i_p}]$. The variance of the non-zero entries is 
$$ \mathrm{Var}[T_{1,\ldots,p}] = \alpha_N (1-\alpha_N) /(p-1)!^2  . $$

\begin{lemma}\label{lem:ER_heavy}
    Let $T$ be the adjacency tensor of an Erd\H{o}s-Rényi hypergraph with parameter $\alpha_N =c_N /N^{p-1}$ such that $c_N$ converges towards $c>0$. Then, the tensor
    $$ \Tilde{T} := \frac{T-\E[T]}{\sqrt{N^{p-1} \mathrm{Var}[T_{1,\ldots,p}]} } $$
    is a heavy-Wigner tensor with asymptotic moments $M_k= \left(\frac{1}{c}\right)^{\frac{k}{2}-1}$ for all $k\geq 2$.
\end{lemma}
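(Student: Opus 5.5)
The plan is a direct moment computation for a centered Bernoulli variable, followed by a check of the remaining requirements in the definition of a heavy-Wigner tensor. Write $\alpha=\alpha_N$ and $s_N:=\sqrt{N^{p-1}\mathrm{Var}[T_{1,\ldots,p}]}=\sqrt{c_N(1-\alpha)}/(p-1)!$.

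For distinct indices $i_1,\ldots,i_p$ we have $T_{i_1,\ldots,i_p}=\frac{1}{(p-1)!}\xi$ with $\xi\sim\mathrm{Bernoulli}(\alpha)$. Hence
\[
\Tilde T_{i_1,\ldots,i_p}=\frac{\xi-\alpha}{\sqrt{c_N(1-\alpha)}} .
\]
This variable is centered, and the entries are independent up to symmetry because the hyperedges are independent. For indices that are not distinct, $T$ and $\E[T]$ both vanish, so $\Tilde T=0$ there. Symmetry of $\Tilde T$ is inherited from $T$.

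Next compute the moments for $k\geq 2$:
\[
\E[(\xi-\alpha)^k]=\alpha(1-\alpha)^k+(1-\alpha)(-\alpha)^k=\alpha(1-\alpha)\bigl[(1-\alpha)^{k-1}+(-1)^k\alpha^{k-1}\bigr].
\]
Multiplying by $N^{p-1}$ and dividing by $(c_N(1-\alpha))^{k/2}$, and using $N^{p-1}\alpha=c_N$, gives
\[
N^{p-1}\E[\Tilde T_{i_1,\ldots,i_p}^k]=c_N^{1-k/2}(1-\alpha)^{1-k/2}\bigl[(1-\alpha)^{k-1}+(-1)^k\alpha^{k-1}\bigr].
\]
Since $\alpha_N=c_N/N^{p-1}\to0$ and $c_N\to c>0$, this converges to $c^{1-k/2}=(1/c)^{k/2-1}=M_k$.

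The uniform bound $\sup N^{p-1}\E[\Tilde T^k]=O(1)$ follows from the same formula, because it does not depend on the (distinct) indices, and the degenerate indices contribute $0$. Note that $M_2=1$, as expected from the normalization.

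There is no real obstacle in this argument; the only care needed is in the bookkeeping. One must check that the $N^{p-1}$ factors cancel exactly, through $\alpha N^{p-1}=c_N$. One must also check that the entries with repeated indices satisfy the heavy-Wigner conditions, which they do because they are identically zero.
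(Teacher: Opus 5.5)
Your proof is correct and takes the same route as the paper: a direct computation of the centered Bernoulli moments, in which $N^{p-1}\alpha_N=c_N$ cancels the normalization and gives $c_N^{1-k/2}(1+o(1))\to c^{1-k/2}$. You are slightly more careful than the paper's write-up, which leaves implicit the repeated-index entries, the uniform $O(1)$ bound and the $1/(p-1)!$ scaling of the entries.
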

\begin{proof}
Let $1\leq i_1 < \ldots < i_p \leq N$. The entries are independent up to symmetries by construction. 
It is also clear that $\E[\Tilde{T}_{i_1,\ldots,i_p}]=0$. Now let $k\geq 2$. Since the random variable $T_{i_1,\ldots,i_p}$ only takes value $0$ or $1$, it is also the case of $T_{i_1,\ldots,i_p}^k$, so we have
\begin{align}
    N^{p-1} \E[\Tilde{T}_{i_1,\ldots,i_p}^k] &= N^{p-1} \frac{(p-1)!^{k}}{(p-1)!^k} \times \frac{(-\alpha_N)^k (1 - \alpha_N) + (1-\alpha_N)^k \alpha_N}{N^{(p-1)k/2} \alpha_N^{k/2} (1-\alpha_N)^{k/2}}  \\
    &= (-1)^k\left(N^{p-1}(1-\alpha_N)\right)^{1-k/2} \alpha_N^{k/2} + \frac{c_N}{c_N^{k/2}} (1-\alpha_N)^{k/2} \\
    &=\frac{c_N}{c_N^{k/2}} (1+o(1)) .
\end{align}
We get the result.
\end{proof}

\begin{lemma}\label{lem:ER}
    Let $T$ be the adjacency tensor of an Erd\H{o}s-Rényi hypergraph with parameter $\alpha_N =c_N /N^{p-1}$ such that $c_N \rightarrow \infty$ and $N^{p-1}(1-\alpha_N)\rightarrow\infty$. Then, the tensor 
    $$ \Tilde{T} := \frac{T-\E[T]}{\sqrt{N^{p-1} \mathrm{Var}[T_{1,\ldots,p}]} } $$
    is a heavy-Wigner tensor with $M_2=1$ and $M_k=0$ for $k \geq 3$.
\end{lemma}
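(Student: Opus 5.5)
The plan is to run the same computation as in the proof of Lemma \ref{lem:ER_heavy}, this time keeping both terms of the exact moment formula and checking that each one vanishes for $k\geq 3$ in the new regime. Independence up to symmetry and centering of $\Tilde{T}$ are immediate from the construction. Fix $i_1<\cdots<i_p$ and $k\geq 2$. Since $T_{i_1,\ldots,i_p}(p-1)!$ is a Bernoulli$(\alpha_N)$ variable, the normalisation by $\sqrt{N^{p-1}\mathrm{Var}[T_{1,\ldots,p}]}$ cancels the factors $(p-1)!$. Exactly as in the previous lemma, this gives
\begin{equation*}
N^{p-1}\E[\Tilde{T}_{i_1,\ldots,i_p}^k] = (-1)^k\bigl(N^{p-1}(1-\alpha_N)\bigr)^{1-k/2}\alpha_N^{k/2} + c_N^{1-k/2}(1-\alpha_N)^{k/2}.
\end{equation*}
This identity holds for every $N$, not only asymptotically.

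For $k=2$ the formula reads $\alpha_N+(1-\alpha_N)=1$, so $M_2=1$ holds exactly. For $k\geq 3$ I treat the two terms separately.
\begin{itemize}
\item The second term is bounded by $c_N^{1-k/2}$, which tends to $0$ because $c_N\to\infty$ and $1-k/2<0$.
\item In the first term, $\alpha_N^{k/2}\leq 1$ and $\bigl(N^{p-1}(1-\alpha_N)\bigr)^{1-k/2}\to 0$, by the hypothesis $N^{p-1}(1-\alpha_N)\to\infty$ and again $1-k/2<0$.
\end{itemize}
Hence $M_k=0$ for all $k\geq 3$. This is precisely where each of the two hypotheses is used: one controls the rare value $1$ of the Bernoulli variable, and the other controls its rare value $0$ when $\alpha_N$ is close to $1$.

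It remains to verify the uniform condition $\sup N^{p-1}\E[\Tilde{T}^k]=O(1)$ in the definition. Off-diagonal entries are identically distributed, and the bounds above show their normalised moments are bounded uniformly in $N$. Entries with a repeated index are deterministically $0$, since $\E[T]$ also vanishes there, so they contribute nothing.

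The only delicate point is the first term when $\alpha_N$ does not tend to $0$, where $\alpha_N^{k/2}$ has no decay to offer. I handle it with the trivial bound $\alpha_N\leq1$ together with the growth of $N^{p-1}(1-\alpha_N)$, and no step beyond these elementary bounds seems to be needed.
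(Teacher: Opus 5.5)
Your proof is correct and follows the paper's argument. Both start from the exact identity for $N^{p-1}\E[\Tilde{T}^k_{i_1,\ldots,i_p}]$ from Lemma \ref{lem:ER_heavy}, read off $M_2=1$ exactly, and for $k\geq 3$ let the two terms vanish using $c_N\to\infty$ and $N^{p-1}(1-\alpha_N)\to\infty$ respectively; your extra checks (uniform boundedness, and entries with repeated indices being identically zero) are details the paper leaves implicit.
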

\begin{proof}
    We start from the second line of the computation of the moments in the proof of Lemma \ref{lem:ER_heavy}. If $c_N \rightarrow \infty$ and $N^{p-1}(1-\alpha_N)\rightarrow\infty$, then we immediately have $N^{p-1} \E[\Tilde{T}_{i_1,\ldots,i_p}^k] \to 0$ for $k\geq 3$. On the other hand, 
    $$N^{p-1} \E[\Tilde{T}_{i_1,\ldots,i_p}^2] = \alpha_N + (1-\alpha_N) =1 ,$$
    which concludes the proof.
\end{proof}

Now we consider again $u =\mathbf{1}/ \sqrt{N} \in \mathbb{S}^{N-1}$ and we define 
\begin{equation}\label{eq:ERmatrixTA}
    \Tilde{A}_N = \frac{N^{\frac{p-2}{2}} \Tilde{T}.u^{p-2}}{\sqrt{(p-2)!}} 
\end{equation} 

For $c>0$, let $\mu_c$ be the measure with moments given for $k\geq 1$ by
$$ \eta_{k} = \sum_{H \in \mathcal{T}_k^{\bullet}} \frac{1}{|\mathrm{Aut(H)}|} \sum_{\gamma \in P_k(H)} \prod_{e \in E} (p-2)!^{m_\gamma(e)/2} \left(\frac{1}{c} \right)^{\frac{m_\gamma(e)-2}{2}} , $$
where $\mathcal{T}_k^{\bullet}$, $P_k(H)$ and $m_\gamma(e)$ have been defined in Equation \ref{eq:moments de Wigner lourd}. Note that $\mu_c$ has unbounded support and is uniquely characterized by its moments. We refer to Section \ref{sec: limit operator contraction} for a better understanding of this measure.

\begin{proof}[Proof of Theorem \ref{thm: convergence erdos renyi}]
    The first point is a consequence of Lemma \ref{lem:ER} and Theorem \ref{thm:measureHeavyWigner}. Note that the law of the entries depends on $N$ so we only have a variance in $1/N$ and therefore the weak convergence only holds in probability.
    The first part of the second point is a consequence of Lemma \ref{lem:ER_heavy} and Theorem \ref{thm:measureHeavyWigner}. We postpone the proof of the characterization of the limiting law to Section \ref{sec: limit operator contraction}.
\end{proof}


\subsection{Lévy tensors}\label{sec:levy}

The spectral theory of heavy-tailed random matrices goes back to the work of Bouchaud and Cizeau \cite{bouchaud_cizeau}. They introduced Lévy matrices as random matrix models with infinite-variance entries. Such models exhibit limiting spectral distributions that differ fundamentally from the semicircle law that have been studied and characterized by Ben Arous, Guionnet \cite{ben_arous_guionnet} and Bordenave, Caputo, Chafaï \cite{bordenave_caputo_chafai}. We work in this framework and prove Theorem \ref{thm:Levy} in this section.

Let $(x_{i_{1},\ldots,i_{p}},1\leq i_{1}\leq i_{2}\leq\ldots\leq i_{p}<\infty)$ be an infinite $p$-array of i.i.d. real variables. Denote by $X_{N}$ the symmetric tensor of order $p$ given by 
\begin{equation}
    X_{N}(i_{1},\ldots,i_{p})=x_{i_{\sigma(1)},\ldots,i_{\sigma(p)}},
\end{equation}
where $\sigma$ is a permutation such that $i_{\sigma(1)}\leq\ldots\leq i_{\sigma(p)}$.

Let $\alpha\in(0,2)$, we assume that the common distribution of the absolute value of the $x_{\mathbf{i}}$'s is in the domain of attraction of an $\alpha$-stable law, i.e. there exists a slowly varying function $L$ such that 
\begin{equation}
    \P(|x_{\mathbf{i}}|\geq u)=\frac{L(u)}{u^{\alpha}}.
\end{equation}
We introduce the normalizing constant 
\begin{equation}
    a_{N}:=\mathrm{inf}\left(u,\;\P(|x_{\mathbf{i}}|\geq u)\leq\frac{1}{N}\right).
\end{equation}
One can check that there is a slowly varying function $L_{0}(N)$ such that 
\begin{equation}
    a_{N^{p-1}}=L_{0}(N)N^{\frac{p-1}{\alpha}}.
\end{equation}
A Lévy tensor $T_{N}$ is defined as $T_{N}:=a_{N^{p-1}}^{-1}X_{N}$. Recall that we work under the additional tail-balance condition
$$\theta:=\lim_{t\rightarrow\infty}\frac{\P(x_{\mathbf{i}}>t)}{\P(|x_{\mathbf{i}}|>t)} \in [0,1].$$
For $B >0$, we denote $X_N^B$ the truncated tensors with entries $x_{i_{1},\ldots,i_{p}} \mathbf{1}_{| x_{i_{1},\ldots,i_{p}} | \leq B a_{N^{p-1}}}$ and its centered normalized version
$$T_N^B := a_{N^{p-1}}^{-1}X_{N}^B - \mathbb{E} \left[ a_{N^{p-1}}^{-1}X_{N}^B \right] .$$
We also denote $u=\mathbf{1}/\sqrt{N} \in \mathbb{S}^{N-1}$ and 
$$ \Tilde{A}^B_N := \frac{N^{\frac{p-2}{2}} T_N^B .u^{p-2} }{\sqrt{M_2 (p-2)!}} .$$

\begin{lemma}\label{lem:levyheavy}
    The tensor $a_{N^{p-1}}^{-1}X_{N}^B$ is a heavy-Wigner tensor with parameters
    $$ M_{k} = \frac{\alpha}{k-\alpha} B^{k-\alpha} (\theta + (-1)^k (1-\theta)) $$
    In particular,
    $ M_{2k} = \frac{\alpha}{2k-\alpha} B^{2k-\alpha}$ and $ M_{2k+1} = (2 \theta -1) \frac{\alpha}{2k+1-\alpha} B^{2k+1-\alpha}$. 
\end{lemma}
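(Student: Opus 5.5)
We need to show that $N^{p-1}\E[(a_{N^{p-1}}^{-1}x\,\mathbf 1_{|x|\le Ba_{N^{p-1}}})^k]\to M_k$ for $k\ge 2$. Independence up to symmetry is immediate from the construction of $X_N$. The statement says ``heavy-Wigner'', but the truncated tensor is not centered, so the parameters should really be read for the centered version $T_N^B$. Since the mean correction is small, as shown below, both have the same $M_k$. The plan is a standard regular-variation computation.

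Write $n=N^{p-1}$ and $a=a_n$. By definition of $a_n$ and regular variation, $n\P(|x|\ge ta)\to t^{-\alpha}$ for every $t>0$. Combined with the tail balance, this gives $n\P(x\ge ta)\to\theta t^{-\alpha}$ and $n\P(x\le -ta)\to(1-\theta)t^{-\alpha}$.

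Write $y=x/a$. For $k\ge 2$, use the layer-cake formula on the positive part:
\[
n\E[y^k\mathbf 1_{0<y\le B}]=\int_0^B k t^{k-1}\, n\P(t<y\le B)\,\dd t .
\]
The integrand converges pointwise to $kt^{k-1}\theta(t^{-\alpha}-B^{-\alpha})$. This yields
\[
\theta\Big(B^{k-\alpha}\tfrac{k}{k-\alpha}-B^{k-\alpha}\Big)=\theta\tfrac{\alpha}{k-\alpha}B^{k-\alpha}.
\]
The negative part is handled the same way, with a factor $(-1)^k(1-\theta)$.

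The main obstacle is justifying dominated convergence near $t=0$. For this I use Potter's bounds: for $t\in(0,B]$ and large $n$, $n\P(|y|>t)\le C t^{-\alpha-\varepsilon}$ with $\varepsilon<k-\alpha$. This holds because $k\ge2>\alpha$, so $t^{k-1-\alpha-\varepsilon}$ is integrable at $0$. The same Potter bound gives $\sup_n n\E|y|^k\mathbf 1_{|y|\le B}<\infty$, and hence the $O(1)$ requirement.

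For centering, the mean $m_n=\E[y\mathbf 1_{|y|\le B}]$ satisfies $|m_n|\le \E[|y|\mathbf 1_{|y|\le B}]$. Splitting at level $1$ and using the same bounds gives $|m_n|=O(n^{-1}(1+\int_0^1 t^{-\alpha-\varepsilon}dt))$ when $\alpha<1$, and $O(n^{-1/\alpha+\delta})$ in general. In all cases $m_n=o(n^{-1/2})$ when $\alpha<2$. Expanding $(y-m_n)^k$ binomially, each cross term is bounded by $|m_n|^j\,n\E|y|^{k-j}\mathbf 1_{|y|\le B}$. For $k-j\ge2$ this is $o(1)$. For $k-j\in\{0,1\}$ it is bounded by $n|m_n|^j B$ with $j\ge k-1\ge1$, and this is $o(1)$ whenever $n|m_n|^{k-1}\to0$.

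The boundary case is $k=2$, where the last term is $n m_n^2$; the estimate $m_n=o(n^{-1/2})$ handles it. For $\alpha\ge1$ I would check this via $\E[|y|\mathbf 1_{|y|\le B}]\lesssim n^{-1}\int_0^B t^{-\alpha-\varepsilon}\dots$ when $\alpha<1$, and via Karamata's theorem ($\E|x|\mathbf 1_{|x|\le s}\sim$ regularly varying of index $1-\alpha$) when $\alpha\ge1$. This gives $|m_n|\lesssim n^{-1}$ up to slowly varying factors, or when $\alpha>1$ after subtracting the finite mean, which only shifts by a deterministic tensor. Hence the centered moments $N^{p-1}\E[(T^B_N)^k]$ converge to the same $M_k$. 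Specializing $k$ even and odd gives the stated formulas.
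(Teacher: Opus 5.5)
Your argument is correct and is essentially the paper's argument made self-contained. The paper cites Karamata's theorem for the truncated moments, and merely asserts that tail balance gives the odd case and that centering adds only lower-order terms. You instead derive the asymptotics directly, via the layer-cake formula on each sign with Potter domination (for $t<u_0/a_n$, where Potter does not apply, use the trivial bound $n\P(|y|>t)\le n\le Ct^{-\alpha-\varepsilon}$, valid since $a_n^{\alpha+\varepsilon}\gg n$), and you actually verify the centering through $N^{p-1}m_N^2\to 0$.

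One small slip in the centering: your two bounds on $m_n$ each fail in one regime. For $\alpha<1$ and $\theta\neq 1/2$ one has $m_n\asymp n^{-1}$, and for $\alpha>1$ with $\E x\neq 0$ one has $m_n\sim \E[x]/a_n$. The uniform statement is $|m_n|=O(n^{-\min(1,1/\alpha)+\delta})$, which is still $o(n^{-1/2})$. Moreover, since $\E[y\mathbf 1_{|y|\le B}]=m_n$ exactly, the $j\in\{k-1,k\}$ terms are $O(n|m_n|^k)=o(1)$ for every $k\ge 2$, so no separate boundary case is needed.
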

\begin{proof}
    We know by Karamata's theorem (see Theorem VIII.9.2. in \cite{feller}) the following estimate on the moments of the truncated entries, for $s\geq 2$,
    $$ \E [|x_{i_{1},\ldots,i_{p}} |^s \1_{|x_{i_{1},\ldots,i_{p}} |\leq B N^{p-1}} ] \sim \frac{\alpha}{s-\alpha} B^{s-\alpha} a^s_{N^{p-1}} \mathbb{P}(|x_{i_{1},\ldots,i_{p}} |>a^s_{N^{p-1}}) .$$
    Since $N^{p-1}\mathbb{P}(|x_{i_{1},\ldots,i_{p}} |>a^s_{N^{p-1}}) \to 1$, the even moments have the stated limit. The odd moments follow from the tail-balance assumption. Subtracting the mean does not change the limits for $s\geq 2$, because the centered mean contributes only lower-order terms to $N^{p-1}\mathbb{E}[(T_N^B)^s]$.
\end{proof}

\begin{corollary}\label{cor:convLevy}
    As $N \rightarrow \infty$, 
    $ \mu_{\Tilde{A}_N^B}$ converges weakly in probability towards $\mu^B $,
    where $\mu^B$ has unbounded support and is uniquely determined by its moments which are given by the $\eta_k^{(M'_r)}$ for $k\geq 1$, where
    $$ M'_r= \frac{(p-2)!^{r/2} (2- \alpha)}{r-\alpha} \left( \frac{2-\alpha}{\alpha} B^{\alpha}\right)^{r/2-1} (\theta + (-1)^r (1-\theta)). $$
\end{corollary}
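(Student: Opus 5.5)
The plan is to reduce Corollary \ref{cor:convLevy} to Theorem \ref{thm:measureHeavyWigner}, with Lemma \ref{lem:levyheavy} supplying the parameters. First, note that $T_N^B$ and $a_{N^{p-1}}^{-1}X_N^B$ differ only by the deterministic tensor of means. By the last sentence of the proof of Lemma \ref{lem:levyheavy}, $T_N^B$ is itself a heavy-Wigner tensor: its entries are centered, independent up to symmetry, and satisfy
\[
N^{p-1}\E[(T_N^B)^k_{i_1,\ldots,i_p}]\to M_k=\frac{\alpha}{k-\alpha}B^{k-\alpha}\bigl(\theta+(-1)^k(1-\theta)\bigr)
\]
for $k\ge 2$. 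The uniform bound $\sup N^{p-1}\E[|T^B|^k]=O(1)$ follows from Karamata's estimate, since the entries are i.i.d. One should also check that diagonal entries (with repeated indices) obey the same bounds. This holds because they have the same law as the off-diagonal ones, and they only enter trace invariants of non-hypertree quotients, which are negligible by Theorem \ref{thm: trace injective heavy wigner}.

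Next, apply Theorem \ref{thm:measureHeavyWigner}. It gives that the empirical spectral distribution of $\Tilde{A}_N^B$ converges weakly in probability to the measure with moments $\eta_k^{(M'_r)}$, where $M'_r=(p-2)!^{r/2}M_r/M_2^{r/2}$. Plugging in $M_2=\frac{\alpha}{2-\alpha}B^{2-\alpha}$ gives
\[
\frac{M_r}{M_2^{r/2}}=\frac{\alpha}{r-\alpha}B^{r-\alpha}\Bigl(\frac{2-\alpha}{\alpha}\Bigr)^{r/2}B^{-(2-\alpha)r/2}\bigl(\theta+(-1)^r(1-\theta)\bigr).
\]
This simplifies to $\frac{2-\alpha}{r-\alpha}\bigl(\frac{2-\alpha}{\alpha}B^{\alpha}\bigr)^{r/2-1}(\theta+(-1)^r(1-\theta))$, because the power of $B$ is $r-\alpha-r+\alpha r/2=\alpha(r/2-1)$ and the power of $\frac{2-\alpha}{\alpha}$ is $r/2-1$ after absorbing the factor $\alpha$. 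This is exactly the formula for $M'_r$ in the statement.

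For unique determination, Lemma \ref{lem:caract_moments} applies: $|M'_r|\le C\,a^r$ with $a=\sqrt{(p-2)!}\max(1,\sqrt{(2-\alpha)B^\alpha/\alpha})$ and $C$ bounded uniformly in $r$ (since $\frac{2-\alpha}{r-\alpha}\le 1$ and $\frac{\alpha}{2-\alpha}B^{-\alpha}$ is a fixed constant). The $O(a^r)$ hypothesis therefore holds. For unbounded support, Lemma \ref{rem:no_compact_support} applies because $M_4=\frac{\alpha}{4-\alpha}B^{4-\alpha}>0$. Its proof uses only the star hypertrees with multiplicity $2k$, and their contributions to $\eta_{2ks}$ are nonnegative. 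The odd moments may carry signs through $2\theta-1$, but even moments only involve products in which the odd multiplicities pair up. To be safe, I would argue that the star lower bound stays valid by noting that all terms in $\eta_{2ks}$ are products of $M'_r$ over hypertrees.

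The main obstacle is this sign issue. When $\theta\neq 1/2$, some hypertrees contributing to an even moment may have two odd multiplicities, with product $(2\theta-1)^2\ge 0$, yet a general product of odd-indexed $M'_r$ can be negative. I would resolve it using the fact that the $\eta_k$ are genuine moments of a probability measure: a measure with compact support $[-R,R]$ satisfies $\eta_{2m}\le R^{2m}$. One then needs a lower bound on $\eta_{2ks}$ that is not destroyed by cancellation. I would try to prove nonnegativity hypertree by hypertree through a parity argument: in a hypertree the path crosses each hyperedge in a way that forces paired parities. Failing that, I would compare with $\E[\mathrm{tr}(\Tilde A^{2m})]\ge$ the contribution of a single entry's fourth-moment terms.
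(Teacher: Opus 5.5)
Your reduction is the same as the paper's. You apply Theorem~\ref{thm:measureHeavyWigner} to the centered tensor $T_N^B$ with the parameters of Lemma~\ref{lem:levyheavy}, simplify $M'_r=(p-2)!^{r/2}M_r/M_2^{r/2}$, and cite Lemma~\ref{lem:caract_moments} for uniqueness and Lemma~\ref{rem:no_compact_support} for unbounded support. Your bookkeeping is right: the power of $B$ is $\alpha(r/2-1)$, and $\alpha\bigl(\frac{2-\alpha}{\alpha}\bigr)^{r/2}=(2-\alpha)\bigl(\frac{2-\alpha}{\alpha}\bigr)^{r/2-1}$. Working directly with the centered $T_N^B$ also makes the paper's closing rank-one remark unnecessary.

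The one step you leave open is the sign issue, which the paper never raises. You are right that the star lower bound in the proof of Lemma~\ref{rem:no_compact_support} needs every summand of $\eta_{2ks}$ to be nonnegative. You already noted that odd multiplicities must come in even number. The fact you are missing is that, for these parameters, all odd-indexed $M'_r$ have the same sign. Every $M'_r$ with $r$ even is strictly positive. Every $M'_r$ with $r$ odd is a positive number times $2\theta-1$. So a product of an even number of odd-indexed factors is $\ge 0$, and the worry that such a product ``can be negative'' dissolves.

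Concretely, in a summand of $\eta_{2m}$ the multiplicities $m(P,e)$ add up to $2m$. Hence an even number of them are odd, and $\prod_e M'_{m(P,e)}\ge 0$. Therefore $\eta_{2ks}$ is at least the contribution of the star with $s$ hyperedges of multiplicity $4$, and $M'_4>0$ for every $\theta$. The super-exponential lower bound of Lemma~\ref{rem:no_compact_support} then applies, so your finite-$N$ fallback comparison is not needed.

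Your parity sketch cannot be carried out hyperedge by hyperedge. For $p\ge 3$, a closed walk can go around a triangle $o\to a\to b\to o$ inside a single hyperedge. Individual multiplicities can therefore be odd, which is exactly why $M'_3$ enters $\eta_3$. Only the global parity count is true. With that line added, your proof is complete.
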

\begin{proof}
    We apply Theorem \ref{thm:measureHeavyWigner} to the heavy-Wigner tensor $T_N^B$. It gives the convergence in probability of $\mu_{\Tilde{A}^B}$ towards $\mu^B$ having moments $\eta_k^{(M'_r)}$ for $k\geq 1$, where
    $$M'_{r}=\frac{(p-2)!^{r/2} M_{r}}{ M_2^{r/2} }.$$
    Since $M_2=\frac{\alpha}{2-\alpha} B^{2-\alpha} $ and $M_{r} = \frac{\alpha}{k-\alpha} B^{r-s} (\theta + (-1)^r (1-\theta)) $, we find the desired result. 
    It has unbounded support because the sequence $(\eta_k)$ grows faster than exponential, see Lemma \ref{rem:no_compact_support}. It is uniquely determined by its moments by Lemma \ref{lem:caract_moments}.
    Finally, centering is a rank-one perturbation, therefore it does not affect the limiting ESD.
\end{proof}

\begin{lemma}\label{lem:tightness}
    The sequence of probability measures $(\mu^B)_{B >0}$ is tight for the weak topology in $\mathcal{P}(\R)$. Consequently, $\mu^B$ converges weakly to some limiting measure $\mu$ as $B\rightarrow\infty$.
\end{lemma}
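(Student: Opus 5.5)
The approach is to first prove tightness by a uniform second-moment bound, and then upgrade subsequential limits to a single limit by a Cauchy argument on Stieltjes transforms.

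\textbf{Tightness.} I would use the normalization built into $\Tilde{A}^B_N$. By Corollary \ref{cor:convLevy}, $\mu^B$ has moments $\eta_k^{(M'_r)}$ with $M'_r$ depending on $B$. By Remark \ref{rem:eta2}, $\eta_2^{(M'_r)}=1$ for every admissible parameter sequence. Hence
\[
\int x^2\,\dd\mu^B(x)=1 \qquad\text{for all } B>0 .
\]
Markov's inequality then gives $\mu^B(\{|x|>R\})\le R^{-2}$ uniformly in $B$, which is tightness. By Prokhorov's theorem, every sequence $B_n\to\infty$ has a weakly convergent subsequence. This part is routine.

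\textbf{Uniqueness of the limit.} It remains to show that all subsequential limits agree. Moments are useless here: each $M'_r$ with $r\ge 3$ grows like a power of $B$, so the moments of $\mu^B$ blow up as $B\to\infty$. I would therefore work with Stieltjes transforms $g_B(z)=\int (x-z)^{-1}\dd\mu^B(x)$ for fixed $z\in\C\setminus\R$. The aim is to show that $(g_B(z))_B$ is Cauchy as $B\to\infty$; weak convergence of $\mu^B$ then follows from tightness.

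\textbf{Cauchy step at finite $N$.} First, because $\mu_{\Tilde{A}^B_N}\to\mu^B$ in probability, it suffices to compare the empirical Stieltjes transforms of $\Tilde{A}^B_N$ and $\Tilde{A}^{B'}_N$ uniformly in large $N$. Second, I would use the resolvent bound
\[
|g_{X}(z)-g_Y(z)|\le \frac{1}{(\mathrm{Im}\,z)^2}\Bigl(\tfrac1N\mathrm{Tr}(X-Y)^2\Bigr)^{1/2}.
\]
Third, I would combine it with the rank inequality to remove the few large entries. Concretely, split the contracted entries at a level $\varepsilon a_{N^{p-1}}$. The entries of size at least $\varepsilon a_{N^{p-1}}$ affect only a number of rows of order $\varepsilon^{-\alpha}N$ times a vanishing factor, controlled by a rank or Hoffman--Wielandt estimate. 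The small entries are identical in $X^B_N$ and $X^{B'}_N$. The difference in centering is a rank-one perturbation and can be dropped.

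\textbf{Main obstacle.} The hard point is the $B$-dependent normalization $\sqrt{M_2(B)}\sim B^{1-\alpha/2}$. After this rescaling, the difference $\Tilde{A}^{B'}_N-\lambda\Tilde{A}^B_N$, with $\lambda=\sqrt{M_2(B)/M_2(B')}$, need not have small normalized Frobenius norm. So the naive Hoffman--Wielandt comparison fails. I would first try to show that the spectral contribution of the intermediate band $B a_{N^{p-1}}<|x|\le B'a_{N^{p-1}}$ is carried by a small proportion of rows, of order $B^{-\alpha}N$. Their effect on $g$ is then $O(B^{-\alpha})$ by the rank inequality $|g_X-g_Y|\le \mathrm{rank}(X-Y)/(N\,\mathrm{Im}\,z)$. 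The remaining issue would be that the rescaling factors converge after one fixes a $B$-independent scale for the bulk part.

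If this fails, the fallback is to characterize the subsequential limits through a fixed-point equation for the resolvent of the limiting operator of Section \ref{sec: limit operator contraction}, in the spirit of Ben Arous--Guionnet and Bordenave--Caputo--Chafa\"i. Uniqueness of the solution of that equation would then identify the limit.
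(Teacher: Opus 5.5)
Your tightness step is the paper's own argument: second moment equal to $1$ by Remark~\ref{rem:eta2}, then Markov and Prokhorov. The gap is in the uniqueness step, which you leave open, and the way you hope to close it cannot work. You rely on ``the rescaling factors converge after one fixes a $B$-independent scale.'' But the normalizing constant of $\Tilde{A}^B_N$ is $\sqrt{(p-2)!M_2(B)}$ with $M_2(B)=\frac{\alpha}{2-\alpha}B^{2-\alpha}\to\infty$. Even the ratios $\sqrt{M_2(B)/M_2(B')}$ do not tend to $1$: for $B'=2B$ the ratio is $2^{-(1-\alpha/2)}$ for every $B$. So no comparison of $\Tilde{A}^B_N$ with $\Tilde{A}^{B'}_N$, by resolvents or otherwise, yields a Cauchy bound. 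Your $\varepsilon$-splitting does not help either. About $\varepsilon^{-\alpha}N/p!$ tensor entries exceed $\varepsilon a_{N^{p-1}}$, touching order $N$ rows, so a rank or Frobenius estimate cannot remove them; they carry the limiting spectrum. The fixed-point fallback is a separate, unproved program.

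The missing idea is to do the comparison at one $B$-independent scale, as the paper does. Compare every truncation with the single untruncated contraction $A_N=N^{(p-2)/2}T_N.u^{p-2}$. Truncation alters only the $L_N^B$ entries with $|x|>Ba_{N^{p-1}}$, each contributing a matrix of rank at most $p$, and centering adds rank one. Hence $\mathrm{rank}(A_N-A_N^B)\le pL_N^B+1$ with $\frac{p}{N}\E L_N^B\to B^{-\alpha}/(p-1)!$. The rank inequality then gives $d_L(\nu^B,\nu^C)\le (B^{-\alpha}+C^{-\alpha})/(p-1)!$, where $\nu^B$ is the limit of $\E\mu_{A_N^B}$. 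This is exactly your ``intermediate band'' estimate, and it suffices on its own, with no resolvent or Hoffman--Wielandt input.

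The normalization then resolves in the opposite direction from what you expected. $\mu^B$ is the image of $\nu^B$ under $x\mapsto x/\sqrt{(p-2)!M_2(B)}$, whose factor tends to $0$, and $(\nu^B)$ is Cauchy, hence tight. Therefore $\mu^B\Rightarrow\delta_0$. The identity $\int x^2\,\dd\mu^B=1$ does not prevent this, because the second moment escapes to infinity. So the statement as written holds, but with a degenerate limit; a non-trivial limit exists only at the unnormalized scale.

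Be aware that the paper's own proof passes over precisely the point you flagged. Its remark that ``a common deterministic scalar factor \ldots\ is irrelevant'' is valid only for a factor independent of $B$. What its Cauchy argument actually controls is $(\nu^B)$, not the normalized $(\mu^B)$.
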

\begin{proof}
    The proof is similar to the one of Lemma $3.1$ in \cite{ben_arous_guionnet}. First, recall from Remark \ref{rem:eta2} that
    \[
    \int x^2\,\mu^B(dx)=1
\]
for every $B>0$. By Markov's inequality, for every $R>0$, we have
\[
    \mu^B(|x|>R)
    \leq \frac{1}{R^2}\int x^2\,\mu^B(dx)
    =\frac{1}{R^2},
\]
which is uniform in $B$. Given $\varepsilon>0$, choose
$R>\varepsilon^{-1/2}$. Then
\[
    \sup_{B>0}\mu^B([-R,R]^c)\leq \varepsilon,
\]
which proves tightness of the family $(\mu^B)_{B>0}$. Since $\mathbb R$ is a Polish space, Prokhorov's theorem implies that this tight family is relatively compact for weak convergence. Consequently every sequence $B_j\to\infty$ admits a subsequence along which $\mu^{B_j}$ converges weakly. Now write $T_N=a_{N^{p-1}}^{-1} X_N$ for the untruncated tensor and $T_N^{B,0}$ for the non-centered truncated tensor. Its centered version is $T_N^B=T_N^{B,0}-\E T_N^{B,0}$. Let
\[
    A_N=N^{(p-2)/2}T_N.u^{p-2},
    \qquad
    A_N^{B,0}=N^{(p-2)/2}T_N^{B,0}.u^{p-2},
    \qquad
    A_N^B=N^{(p-2)/2}T_N^B.u^{p-2}.
\]
A common deterministic scalar factor in all three matrices is irrelevant
for the rank estimates below. We first recall the rank inequality, which is a consequence of the interlacing inequalities. If $M,M'$ are Hermitian
$N\times N$ matrices, then for $F_M,F_{M'}$ the distribution function of $\mu_M, \mu_{M'}$,
\[
    \sup_{x\in\mathbb R}|F_M(x)-F_{M'}(x)|
    \leq
    \frac{1}{N}\mathrm{rank}(M-M').
\]
Denote $\overline\mu_N:=\E[\mu_{A_N}], \overline\mu_N^B:=\E[\mu_{A_N^B}]$. Taking expectations in the rank inequality gives
\[
    d_L(\overline\mu_N,\overline\mu_N^B)
    \leq
    \frac{1}{N}\E[\mathrm{rank}(A_N-A_N^B)],
\]
where $d_L$ is the Lévy distance. We now estimate this rank. The difference between the untruncated tensor and the non-centered
truncated tensor is supported exactly on the entries satisfying
$|x_{i_1,\ldots,i_p}|>B a_{N^{p-1}}$. Hence
\[
    A_N-A_N^{B,0}
    =
    \sum_{i_1\leq \ldots \leq i_p}
    \frac{x_{i_1,\ldots,i_p}}{a_{N^{p-1}}}
    \mathbf 1_{\{|x_{i_1,\ldots,i_p}|>B a_{N^{p-1}}\}} C_{i_1,\ldots,i_p},
\]
where $C_{i_1,\ldots,i_p}$ is the deterministic matrix obtained by contracting the symmetrized elementary tensor associated with $(i_1,\ldots,i_p)$. For each $(i_1,\ldots,i_p)$, the matrix $C_{i_1,\ldots,i_p}$ has non-zero rows and columns only in $\{i_1,\ldots,i_p\}$. Therefore
\[
    \mathrm{rank}(C_{i_1,\ldots,i_p})\leq |\{i_1,\ldots,i_p\}|\leq p.
\]
Let $L_N^B:= \sum_{i_1\leq \ldots \leq i_p} \mathbf 1_{\{|x_{i_1,\ldots,i_p}|>B a_{N^{p-1}}\}}$ be the number of tensor entries exceeding the truncation threshold.
By subadditivity of the rank,
\[
    \mathrm{rank}(A_N-A_N^{B,0})\leq \sum_{i_1\leq \ldots \leq i_p}
    \mathbf 1_{\{|x_{i_1,\ldots,i_p}|>B a_{N^{p-1}}\}} \mathrm{rank}(C_{i_1,\ldots,i_p}) \leq pL_N^B.
\]
It remains only to account for the centering. Since the entries of
$T_N^{B,0}$ have the same expectation, the tensor $\E T_N^{B,0}$ is
constant, and its contraction is a constant matrix. Therefore
\[
    \mathrm{rank}(A_N^{B,0}-A_N^B)\leq 1.
\]
Consequently $\mathrm{rank}(A_N-A_N^B)\leq pL_N^B+1$, and taking expectations,
\[
    \frac{1}{N}\E[\mathrm{rank}(A_N-A_N^B)]
    \leq
    \frac{p}{N}\E L_N^B+\frac1N.
\]
Now, since $\E L_N^B=\binom{N+p-1}{p} \P(|x_{\mathbf i}|>B a_{N^{p-1}})$, we get 
$$\frac{p}{N}\E L_N^B \sim \frac{1}{(p-1)!} N^{p-1}\P(|x_{\mathbf i}|>B a_{N^{p-1}}) \sim \frac{1}{(p-1)!}B^{-\alpha},$$ 
by regular variation and the definition of $a_{N^{p-1}}$. Therefore,
\[
    \limsup_{N\to\infty}
    d_L(\overline\mu_N,\overline\mu_N^B)
    \leq
    \frac{1}{(p-1)!}B^{-\alpha}.
\]
Now fix $B,C>0$. Since $\overline\mu_N^B\Rightarrow \mu^B$ and $\overline\mu_N^C\Rightarrow \mu^C$, the triangle inequality gives
\[
\begin{aligned}
    d_L(\mu^B,\mu^C)
    &\leq
    \limsup_{N\to\infty}
    \Big(
        d_L(\mu^B,\overline\mu_N^B)
        +
        d_L(\overline\mu_N^B,\overline\mu_N)
        +
        d_L(\overline\mu_N,\overline\mu_N^C)
        +
        d_L(\overline\mu_N^C,\mu^C)
    \Big) \\
    &\leq
    \frac{1}{(p-1)!}
    \left(B^{-\alpha}+C^{-\alpha}\right).
\end{aligned}
\]
Let now $B_j\to\infty$ and $C_j\to\infty$ be such that $\mu^{B_j}\Rightarrow \nu$ and $\mu^{C_j}\Rightarrow \nu'$. Since the Lévy distance metrizes weak convergence, $d_L(\mu^{B_j},\nu)\to 0$ and $d_L(\mu^{C_j},\nu')\to 0$.
Moreover,
\[
    d_L(\mu^{B_j},\mu^{C_j})
    \leq
    \frac{1}{(p-1)!}
    \left(B_j^{-\alpha}+C_j^{-\alpha}\right)
    \to 0,
\]
thus we obtain finally
\[
\begin{aligned}
    d_L(\nu,\nu')
    &\leq
    d_L(\nu,\mu^{B_j})
    +
    d_L(\mu^{B_j},\mu^{C_j})
    +
    d_L(\mu^{C_j},\nu')
    \to 0.
\end{aligned}
\]
Hence $\nu=\nu'$. Therefore every weakly convergent subsequence of $(\mu^B)_{B>0}$ has the same limit. Thus $(\mu^B)_{B>0}$ converges weakly to a measure denoted $\mu_{\alpha,\theta}$. Indeed, suppose for contradiction that there exists $\varepsilon >0$ and a subsequence $\mu^{B_j}$, $B_j \to \infty$, such that for all $j$, $d_L(\mu^{B_j},\mu_{\alpha,\theta})>\varepsilon$. By tightness, we extract from $\mu^{B_j}$ a converging subsequence, it must converge to $\mu_{\alpha,\theta}$, that is a contradiction. The result is proved.
\end{proof}

\begin{proof}[Proof of Theorem \ref{thm:Levy}]
    The first point of the Theorem is proved by Corollary \ref{cor:convLevy}. The second point is given by Lemma \ref{lem:tightness}.
\end{proof}

\section{Local weak convergence and the limiting operator} \label{sec:cvlf}
In this section, we introduce the limiting random hypergraph toward which an Erd\H{o}s-Rényi hypergraph converges to, in the sparse regime. Then, we express the limiting measure of the matrix obtained by contracting an Erd\H{o}s-Rényi tensor along a flat vector as the expected measure of the clique expansion of the random hypergraph we just defined.

\subsection{Local weak convergence for the sparse Erd\H{o}s-Rényi hypergraph}\label{sec:conv_cvlf}

We conclude with a short discussion about the local weak convergence for hypergraphs. We use notations and results from \cite{delgosha_anantharam}.
We prove the local weak convergence of an Erd\H{o}s-Rényi hypergraph with parameter $c/N^{p-1}$.

\begin{proof}[Proof of Theorem \ref{cvlf_er}]
Let $H_N$ be a sequence of Erd\H{o}s-Rényi hypergraphs of parameter $\alpha_N:=c/N^{p-1}$ on $N$ vertices. More formally, let $\mathcal{H}_{N,p}$ be the set of simple $p$-uniform hypergraphs on the set of vertices $\{1,\cdots,N\}$ and consider $\mu_N\in\mathcal{P}(\mathcal{H}^{*})$, i.e. a probability measure on the set of equivalence classes of simple connected rooted hypergraphs defined as
$$\mu_N=\frac{1}{N}\sum_{i=1}^{N}\sum_{H\in \mathcal{H}_{N,p}}\alpha_N^{|E(H)|}(1-\alpha_N)^{\binom{N}{p}-|E(H)|}\delta_{[H(i),i]},$$ where $[H(i),i]$ is the equivalence class of the connected component of $i$ in $H$ rooted at $i$ (for the equivalence relation of isomorphism between rooted unlabeled hypergraphs).

Let $\mu$ be the measure of a Uniform Galton-Watson Hypertree ($UGWH$) of reproduction law 
$$\gamma = \mathrm{Poi\left(\frac{c}{(p-1)!}\right)},$$ 
so that each vertex has a random number of offspring hyperedges distributed as $\gamma$. Note that $\hat{\gamma},$ the size-biased offspring distribution that each vertex except the root follows is actually of the same law as $\gamma$. Here, we only consider $p$-hyperedges so $\gamma$ indeed follows a law on $\N$.
We want to prove that for any $r\geq 1$ and any rooted hypergraph $(X,v)$ of depth at most $r$, 
$$\mu_N(A_r(X,v))\rightarrow\mu(A_r(X,v)),\quad \text{where } A_r(X,v):=\{[H,o]\in\mathcal{H}^{*},\,(H,o)_r\equiv (X,v)\},$$ where $\equiv$ means that the two rooted hypergraphs are isomorphic.
Let $(X,v)$ be a $p$-uniform hypergraph of depth at most $r$, we have
\begin{align*}
    \mu_N(A_r(X,v))&=\frac{1}{N}\sum_{i=1}^{N}\sum_{H\in \mathcal{H}_{N,p}}\alpha_N^{|E(H)|}(1-\alpha_N)^{\binom{N}{p}-|E(H)|}\1_{(H(i),i)_r\equiv(X,v)},\\
    &=\frac{1}{N}\sum_{i=1}^{N}\sum_{\substack{\phi:V(X)\rightarrow[N],\\\text{injective},\\\phi(v)=i}}\sum_{H\in\mathcal{H}_{N,p}}\frac{\1_{(H(i),i)_r=(\phi(X),i)}}{|\mathrm{Aut}(X,v)|}\alpha_N^{|E(H)|}(1-\alpha_N)^{\binom{N}{p}-|E(H)|},
\end{align*}
where $\phi(X)$ denotes the image of the hypergraph $X$ through $\phi$ and the equality between the two hypergraphs means that they are equal, not only isomorphic. The $1/|\mathrm{Aut}(X,v)|$ compensates the fact that given the image of $\phi$, there are $|\mathrm{Aut}(X,v)|$ ways of labeling the graph with indices from the image of $\phi$ that give exactly the same graph.
Remark that $(H(i),i)_r=(\phi(X),i)\implies \prod_{e\in E(X)}\1_{\phi(e)\in E(H)}=1$,
so that 
\begin{align*}
    |\mu_N(A_r(X,v))| \leq \sum_{H\in\mathcal{H}_{N,p}}\alpha_N^{|E(H)|}(1-\alpha_N)^{\binom{N}{p}-|E(H)|} \frac{1}{N} \mathrm{Tr}^0_{X} (T_{H}),
\end{align*}
where $T_H$ is the un-normalized, un-centered adjacency tensor of the hypergraph $H$.
Let $T$ be the (random) adjacency tensor of the Erd\H{o}s-Rényi hypergraph $H_N \sim H(N, c/N^{p-1})$, that we \emph{do not normalize} and \emph{do not center}. 
Then, we have
\begin{align*}
    |\mu_N(A_r(X,v))| \leq \E \frac{1}{N} \mathrm{Tr}^0_{X} (T)  .
\end{align*}
Following the same proof as that of Theorem \ref{thm: trace injective heavy wigner}, we have for any \emph{simple} hypergraph $\hh$, 
$$\lim_{N\rightarrow\infty}\E\frac{1}{N}\mathrm{Tr}^0_\hh(T)=0,$$
as soon as $\hh$ contains a cycle.
If we restrict ourselves to hypergraphs of fixed maximal depth, then $\mu_N$ is asymptotically supported on hypertrees.
Since $\mu$ is also supported on hypertrees, as a consequence of Lemma $4$ in \cite{delgosha_anantharam}, it is enough to show that $\mu_N(A_r(X,v))\rightarrow\mu(A_r(X,v))$ for any $r\geq 1$ and any rooted hypertree $(X,v)$ of depth at most $r$.

Therefore, we consider $(X,v)$ a $p$-uniform hypertree of depth at most $r$, and we recall
\begin{align*}
    \mu_N(A_r(X,v))=\frac{1}{N}\sum_{i=1}^{N}\sum_{\substack{\phi:V(X)\rightarrow[N],\\\text{injective},\\\phi(v)=i}}\sum_{H\in\mathcal{H}_{N,p}}\frac{\1_{(H(i),i)_r=(\phi(X),i)}}{|\mathrm{Aut}(X,v)|}\alpha_N^{|E(H)|}(1-\alpha_N)^{\binom{N}{p}-|E(H)|}.
\end{align*}
Since $\phi$ is injective, $|\phi(V(X))| =|V(X)|$. Let $V_\partial\subset V(\phi(X))$ denote the set of vertices at distance exactly $r$ from the root in $\phi(X)$ and note that summing over hypergraphs $H$ that satisfy the condition $(H(i),i)_r=(\phi(X),i)$ amounts to summing over hypergraphs $H$ that can be obtained from $(\phi(X),i)$ by adding hyperedges $e=\{v_1,\cdots,v_p\}$ with $\{v_1,\cdots,v_p\}\cap \phi(V(X))\subset V_\partial$. Note that there are 
$$L:=\binom{N-|V(X)|+|V_\partial|}{p}$$
such hyperedges. This naturally decomposes $E(H)$ into $E_1\sqcup E_2$ where $E_1= \phi(E(X))$ and $E_2$ is a set of edges as just described. Hence, we have
\begin{align*}
    \mu_N(A_r(X,v))&=\frac{1}{N}\sum_{i=1}^{N}\sum_{\substack{\phi:V(X)\rightarrow[N],\\\text{injective},\\\phi(v)=i}}\sum_{E_2}\frac{\alpha_N^{|E(X)|}(1-\alpha_N)^{\binom{N}{p}-|E(X)|}}{|\mathrm{Aut}(X,v)|}\alpha_N^{|E_2|}(1-\alpha_N)^{-|E_2|},\\
    &=\frac{1}{N}\sum_{i=1}^{N}\sum_{\substack{\phi:V(X)\rightarrow[N],\\\text{injective},\\\phi(v)=i}}\frac{\alpha_N^{|E(X)|}(1-\alpha_N)^{\binom{N}{p}-|E(X)|}}{|\mathrm{Aut}(X,v)|}\sum_{j=0}^{L} \binom{L}{j}\left( \frac{\alpha_N}{1-\alpha_N}\right)^j,\\
    &=\frac{1}{N}\sum_{i=1}^{N}\sum_{\substack{\phi:V(X)\rightarrow[N],\\\text{injective},\\\phi(v)=i}}\frac{\alpha_N^{|E(X)|}(1-\alpha_N)^{\binom{N}{p}-|E(X)|-L}}{|\mathrm{Aut}(X,v)|},\\
    &=\frac{1}{|\mathrm{Aut}(X,v)|}N^{-1+|V(X)|-(p-1)|E(X)|}(1-\alpha_N)^{\binom{N}{p}-|E(X)|-L}c^{|E(X)|}(1+O(1/N)),
\end{align*}
where the sum over $E_2$ ranges over set of hyperedges as described above. Since $(X,v)$ is fixed, $|V(X)|,|V_\partial|$ and $|E(X)|$ are independent of $N$ and 
$$\binom{N}{p}-|E(X)|-\binom{N-|V(X)|+|V_\partial|}{p}=\frac{1}{(p-1)!}(|V(X)|-|V_\partial|)N^{p-1}(1+o(1)),$$ 
so that we have
$$ \mu_N(A_r(X,v))=\frac{1}{|\mathrm{Aut}(X,v)|}e^{-\frac{c}{(p-1)!}(|V(X)|-|V_\partial|)} c^{|E(X)|}(1+o(1)).$$

Let us now compute $\mu(A(X,v))$. For each vertex $w$ at depth strictly less than $r$ in $(X,v)$, denote $d_w$ the number of offspring hyperedges of $w$, we have in the UGWH,
$$ \P (N_w = d_w) = e^{-\frac{c}{(p-1)!}} \frac{c^{d_w}}{(p-1)!^{d_w}d_w!} .$$
When we forget the order of the offspring hyperedges, and of the $(p-1)$ vertices in each of these hyperedges, then we have
\begin{align*}
    \mu(A_r(X,v)) &= \frac{1}{|\mathrm{Aut}(X,v)|}\prod_{w \in V(X) \setminus V_\partial} \left( e^{-\frac{c}{(p-1)!}} \frac{c^{d_w}}{(p-1)!^{d_w}d_w!} \right) (p-1)!^{d_w} d_w!  \\
    &= \frac{1}{|\mathrm{Aut}(X,v)|}e^{-\frac{c}{(p-1)!}(|V(X)|-|V_\partial |)} c^{\sum_{w \in V(X) \setminus V_\partial} d_w},
\end{align*} 
where the $|\mathrm{Aut}(X,v)|$ accounts for the (unlabelled unordered) hypergraphs that we counted too many times. Since $\sum_{w \in V(X) \setminus V_\partial} d_w  = |E(X)|$, we get
$$\mu(A_r(X,v)) =\frac{1}{|\mathrm{Aut}(X,v)|} e^{-\frac{c}{(p-1)!}(|V(X)|-|V_\partial |)} c^{|E(X)|}.$$ 
Finally, we have $\mu_N(A_r(X,v))\rightarrow\mu(A_r(X,v))$, so we conclude $\mu_N \Rightarrow \mu .$

\end{proof}

\begin{remark}
    For the Lévy tensor, it has been proved by Bordenave and Chafaï in \cite{bordenave_chafai} that the limiting measure $\mu_\alpha$ is the spectral measure of the adjacency operator of the {\em Poisson Weighted Infinite Tree} (PWIT) introduced by Aldous in \cite{aldous_pwit}, with intensity 
     $$\Lambda_\alpha=\frac{\alpha}{2}x^{-\frac{\alpha}{2}-1}dx.$$ 
    We may also define a $p$-PWIT where the hyperedges attached to a vertex follow a Poisson point process. We conjecture that one may similarly obtain the local weak convergence for the truncated Lévy tensor. The question is how to remove truncation since we have no longer tightness arguments.
\end{remark}

\subsection{Limiting operator for the contracted tensor}\label{sec: limit operator contraction}

\begin{proof}[Proof of Theorem \ref{thm: convergence erdos renyi}]
We construct a random operator $\mathcal{A}$ from the random hypergraph above so that the moments of $\Tilde{A}$ converge to those of $\mathcal{A}$. Let $\mathcal{T}$ be a UGWH of reproduction law $\mathrm{Poi}(c/(p-1)!)$ and for each realization of $\mathcal{T}$, let $\mathcal{A}$ be the operator defined by 
$$(\mathcal{A}\psi)(v)=\sqrt{\frac{(p-2)!}{c}}\sum_{e\ni v}\sum_{\substack{w\in e,\\w\neq v}}\psi(w),$$
where $\psi\in L^2(V)$ and $v\in V$.

Let us show the limiting measure of the pair-adjacency matrix of the Erd\H{o}s-Rényi hypergraph is 
$$\mu_c=\E\mu_\mathcal{A}^{o,o},$$ where 
$$\int t^k\dd\E\mu_\mathcal{A}^{o,o}=\E\langle \delta_o,\mathcal{A}^k \delta_o\rangle.$$
Since $\mu_c$ is determined by its moments, it is enough to show that the expected number of closed walks of length $k$ from the root on $\mathcal{A}$ (correctly normalized) corresponds to $\eta_k$. Recall that 
\begin{align*}
    \eta_k&=\sum_{(H,o)\in\mathcal{T}^{\bullet}_k}\frac{(p-2)!^{|E(H)|}}{|\mathrm{Aut}(H,o)|}\sum_{\gamma\in P_k(G_H)}\prod_{e\in \overline{E(H)}}\left(\frac{(p-2)!}{c}\right)^{m_\gamma(e)/2-1},\\
    &=\left(\frac{(p-2)!}{c}\right)^{k/2}\sum_{(H,o)\in\mathcal{T}^{\bullet}_k}\frac{c^{|E(H)|}}{|\mathrm{Aut}(H,o)|}|P_k(H)|. 
\end{align*}
Define $\mathcal{A}'=\sqrt{c/(p-2)!}\mathcal{A}$, we show that 
$$\E\langle \delta_o,(\mathcal{A}')^k\delta_o\rangle = \sum_{(H,o)\in\mathcal{T}^{\bullet}_k}\frac{c^{|E(H)|}}{|\mathrm{Aut}(H,o)|}|P_k(H)|.$$
We count the expected number of paths on  $\mathcal{A}'$ depending on their support fat hypertree.

Fix some $(H,o)\in \mathcal{T}_k^\bullet$ and a path $\gamma\in P_k(H)$. In order to embed it into a UGWH, one must choose at each vertex $v$, $r_v$ distinct hyperedges among $D_v$ where $r_v$ is the number of offspring hyperedges of $v$ in $H$ and $D_v$ is a Poisson random variable. The number of ordered choices is $D_v(D_v-1)\cdots (D_v-r_v+1)$ and $\E D_v(D_v-1)\cdots (D_v-r_v+1)=(c/(p-1)!)^{r_v}.$ Furthermore, for each of these hyperedges, their $p-1$ vertices (other than $v$) may also be injected in $(p-1)!$ different ways. Hence, the expected number of labelled rooted injections from $(H,o)$ into $\mathcal{A}'$ is $c^{\sum_{v}r_v}=c^{|E(H)|}$. Since each unlabelled hypergraph is counted $|\mathrm{Aut}(H,o)|$ times in this way, we indeed find the wanted formula.

\end{proof}

\section*{Acknowledgements}
The authors would like to thank their advisors Charles Bordenave, Djalil Chafaï, Camille Male and Pierre Tarrago for their comments on this work, and the University of Bordeaux for its hospitality.

\bibliographystyle{abbrv}
\bibliography{main}

\end{document}